\numberwithin{equation}{section}
\newtheorem{theorem}{Theorem}[section]
\newtheorem{lemma}[theorem]{Lemma}
\newtheorem{proposition}[theorem]{Proposition}
\newtheorem{corollary}[theorem]{Corollary}
\newtheorem{remark}[theorem]{Remark}
\newcommand{\RR}{\mathbb{R}}
\def\beq{\begin{equation}}
\def\eeq{\end{equation}}
\def\bb1{{1\!\!1}}
\def\w{{\omega}}
\def\Rt{\mathbb{R}^2}
\def\triangle{\Delta}
\def\bega{\begin{aligned}}
\def\enda{\end{aligned}}
\def\w{\omega}
\def\pt{\partial}
\def\lw{\left}
\def\rw{\right}
\def\wb{{\bar w_2}}
\def\vb{{\bar v_2}}
\def\we{{\bar w_1}}
\def\ve{{\bar v_1}}
\def\wlm{\rightharpoonup}
\def\wtd{\widetilde}
\def\lbb{L^4\cap L^{4/3}}
\begin{document}

\title{The inviscid limit of Navier-Stokes equations for vortex-wave data on $\RR^2$} 

\author{Toan T. Nguyen\footnotemark[1] \and Trinh T. Nguyen\footnotemark[1]
}

\maketitle

\begin{center}
\emph{This paper is dedicated to Walter Strauss
\\on the occasion of his 80th birthday, as token of friendship and admiration.}
\end{center}

\renewcommand{\thefootnote}{\fnsymbol{footnote}}

\footnotetext[1]{Department of Mathematics, Penn State University, State College, PA 16803. Emails: nguyen@math.psu.edu; txn5114@psu.edu.}

\begin{abstract}

We establish the inviscid limit of the incompressible Navier-Stokes equations on the whole plane $\RR^2$ for initial data having vorticity as a superposition of point vortices and a regular component. In particular, this rigorously justifies the vortex-wave system from the physical Navier-Stokes flows in the vanishing viscosity limit, a model that was introduced by Marchioro and Pulvirenti in the early 90s to describe the dynamics of point vortices in a regular ambient vorticity background. 
The proof rests on the previous analysis of Gallay in his derivation of the vortex-point system.

\end{abstract}



\section{Introduction}
In this paper, we are interested in the vanishing viscosity limit of the incompressible Navier-Stokes equations on the plane $\RR^2$ for irregular initial data; namely, we consider 
\beq\label{NS-u}
\begin{aligned}
\partial_t u^\nu + u^\nu \cdot \nabla u^\nu + \nabla p^\nu &= \nu \Delta u^\nu, 
\\
\nabla \cdot u^\nu & =0,
\end{aligned}\eeq
for fluid velocity $u^\nu(x,t) \in \RR^2$ and pressure $p^\nu(x,t) \in \RR$ at $x\in \RR^2$ and $t\ge 0$. The interest is to understand the asymptotic behavior of solutions in the inviscid limit $\nu \to 0$. 

It is straightforward to show that in the absence of spatial boundaries, regular solutions of the Navier-Stokes equations converge in strong Sobolev norms to the regular solutions of Euler equations as $\nu \to 0$ (e.g., \cite{Kato, Swann, Mas}). The convergence (in $L^2$ for velocity fields) also holds for non-smooth solutions that include vortex patches \cite{CWu1,CWu2,Che,Mas,Sueur}. The problem is largely open for less regular data \cite{CS, Chen}, or even for regular data in domains with a boundary (e.g., \cite{SammartinoCaflisch2,Mae,2N,GrNsup} and the references therein). 

For initial data whose vorticity consists of a finite sum of point vortices (Dirac masses), Gallay \cite{Gallay11} proved that the corresponding Navier-Stokes vorticity indeed converges weakly in the inviscid limit to the sum of point vortices whose centers evolve according to the Helmholtz-Kirchhoff point-vortex system. In this paper, we study the case when initial vorticity consists of one point vortex and a regular part. The case of finitely many point vortices can be treated similarly in combination of \cite{Gallay11} where the vortex-point interaction is understood. 

Let us now detail the problem. For velocity field $u^\nu = (u_1^\nu,u_2^\nu)$, let $\w^\nu = \partial_{x_2} u_1^\nu - \partial_{x_1} u_2^\nu$ be the corresponding vorticity. Taking advantage of the divergence-free condition, we can recover the velocity from vorticity through the so-called Biot-Savart law 
\begin{equation}\label{BS-law}
u^\nu=\nabla^\perp \Delta^{-1} \w^\nu = K\star \w^\nu ,\qquad K(x)=\dfrac{1}{2\pi}\dfrac{x^\perp}{|x|^2} ,
\end{equation}
where $K(x)$ denotes the Green kernel of $\nabla^\perp \Delta^{-1}$, the $\star$ notation stands for the usual convolution in variable $x\in \RR^2$, and $a^\perp = (a_2,-a_1)$ for vectors $a \in \RR^2$. It follows from \eqref{NS-u} that the vorticity solves 
\beq \label{NS}
\pt_t \w^\nu+u^\nu \cdot \nabla \w^\nu=\nu \triangle \w^\nu.
\eeq
We solve the vorticity equation \eqref{NS}, together with \eqref{BS-law}, for initial data of the form  
\beq\label{initial}
\w^\nu_{\vert_{t=0}} = \delta_{z_0}(x)+\w_0^E(x),
\eeq  
where $\delta_{z_0}$ denotes the Dirac delta function centered at $x = z_0$ and $\w_0^E$ is the regular component of vorticity that has compact support and vanishes in a neighborhood of $z_0$. The existence and uniqueness for 2D Navier-Stokes equations with such initial data, or in fact more generally, with initial data of finite measures are known; see, for instance, \cite{Cottet, Giga, Kato, GG}. 

\subsection{Vortex-wave system}
In the inviscid limit, we do not expect the limiting solutions from \eqref{NS}-\eqref{initial} to satisfy Euler equations, even in a weak sense\footnote{In fact, it is not known whether weak solutions to Euler equations exist with point vortex data \cite{MaPu91,MaPu94}.}, but rather the following so-called vortex-wave system coined by Marchioro and Pulvirenti \cite{MaPu91,MaPu94} in the early 90s:  
\beq \label{def-VW}
\bega
&\pt_t\w^E+ ( v^E + H)\cdot \nabla \w^E =0\\
&\dot {z}(t)=v^E(t,z(t)),\\
&\w^E_{\vert_{t=0}}=\w^E_0,\qquad z(0)=z_0,\\
\enda
\eeq
in which $v^E = K \star \w^E$ and $H = K(\cdot - z(t))$. That is, in the limit, the regular component of vorticity is transported by the full velocity, while the location of point vortex is propagated by the velocity $v^E$ generated by the regular vorticity $\w^E$. 

The global weak solutions of \eqref{def-VW} in $L^1\cap L^\infty$ were already obtained in \cite{MaPu91,MaPu94} (see also \cite{Miot11,Miot16} for an extension to $L^p$ spaces), while their uniqueness is proved for Lipschitz or even bounded data \cite{Star,Lacave}, provided the ambient velocity is constant in a neighborhood of the point vortex. In particular, let us recall the following theorem. 

\begin{theorem}[\cite{Lacave}] \label{Lacave} Consider initial data $z_0\in \RR$ and $\w_0^E\in L^1\cap L^\infty(\Rt)$. Assume that $\w_0^E$ has compact support and is constant in a neighborhood of $z_0$. Then, there are a unique global solution $(z(t),\w^E(t))$ to \eqref{def-VW} and a positive function $R(t)$ so that $\w^E(t)$ remains constant in the ball centered at the point vortex $z(t)$ with radius $R(t)$ for all times $t\ge 0$. If we assume in addition that $\w_0^E \in W^{k,p}$ for $kp > 2$ and $p>1$, then for any $T\ge 0$, there holds
\begin{equation}\label{VW-smooth} \sup_{0\le t\le T}\| \w^E(t)\|_{W^{k,p}}  \le C_T\end{equation}
for some constant $C_T$. 
\end{theorem}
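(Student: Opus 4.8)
The plan is to construct the solution by a vanishing‑regularization scheme and then to isolate the two structural facts that drive the theorem: the moving vorticity $\w^E$ stays uniformly away from the point vortex $z(t)$, and the self‑induced velocity $v^E=K\star\w^E$ is log‑Lipschitz. First I would regularize the singular kernel, replacing $H=K(\cdot-z(t))$ by a smoothed field $H_\eps$ obtained by mollifying $K$ near the origin, and solve the resulting system, in which the transporting velocity $v^E+H_\eps$ is bounded and log‑Lipschitz, by Yudovich theory; this produces approximate solutions $(\w^E_\eps,z_\eps)$ with a bi‑continuous flow. Because $\w^E_\eps$ is transported by a divergence‑free field, its $L^1\cap L^\infty$ norms are conserved, $\|\w^E_\eps(t)\|_{L^p}=\|\w_0^E\|_{L^p}$ uniformly in $\eps$, and a compactness argument (as in \cite{MaPu91,MaPu94}) lets me pass to the limit $\eps\to0$ to obtain a global weak solution.

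The heart of the argument, and the main obstacle, is a separation estimate showing that the variable part of $\w^E$ never reaches $z(t)$. Let $X(t,x)$ denote the flow of $v^E+H$, so that $\w^E(t,X(t,x))=\w_0^E(x)$, and for $x$ near $z_0$ set $\xi(t)=X(t,x)-z(t)$. Differentiating and using $\dot z=v^E(t,z)$ gives
\beq\label{sep}
\frac{d}{dt}|\xi|^2 = 2\,\xi\cdot\bigl(v^E(t,X(t,x))-v^E(t,z(t))\bigr) + 2\,\xi\cdot K(\xi).
\eeq
The crucial point is that $K(\xi)=\tfrac{1}{2\pi}\,\xi^\perp/|\xi|^2$ is orthogonal to $\xi$, so the singular term in \eqref{sep} vanishes identically and the radial motion is governed solely by the regular field $v^E$. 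Since $\w^E\in L^1\cap L^\infty$ uniformly in $t$, the velocity $v^E$ is bounded and log‑Lipschitz, whence \eqref{sep} yields the Osgood‑type inequality $\bigl|\tfrac{d}{dt}|\xi|\bigr|\le C\,|\xi|\log\tfrac{1}{|\xi|}$ for $|\xi|$ small. Integrating its lower branch keeps $|\xi(t)|$ bounded below by an explicit positive function on each $[0,T]$; taking $x$ on the boundary of the ball where $\w_0^E$ is constant produces the radius $R(t)>0$.

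With separation in hand the remaining claims follow. Since $\w_0^E$ is constant on some $B(z_0,r_0)$ and the flow carries this ball to a region containing $B(z(t),R(t))$, transport of $\w^E$ along $X$ shows $\w^E(t)$ is constant on $B(z(t),R(t))$. For uniqueness I would note that $\nabla\w^E$ is supported at distance $\ge R(t)$ from $z(t)$, so there $H$ and its derivatives are bounded; one then runs a Yudovich‑type stability estimate on the coupled pair $(\w^E,z)$, controlling the difference of two solutions by the log‑Lipschitz modulus of $v^E$ together with the separation bound, and closing a simultaneous Gronwall/Osgood inequality for the vorticity difference and $|z_1-z_2|$.

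Finally, for propagation of $W^{k,p}$ regularity with $kp>2$, Sobolev embedding makes $\w^E$ Hölder and hence $v^E$ Lipschitz in space, so no logarithmic loss occurs; on the support of the derivatives of $\w^E$, which stays at distance $\ge R(t)$ from the vortex, $H$ and its derivatives obey bounds of order $R(t)^{-j-1}$. Differentiating the transport equation $k$ times and applying Gronwall with these time‑dependent bounds, together with the Calderón–Zygmund estimates for $\nabla v^E$ in $L^p$, yields $\sup_{0\le t\le T}\|\w^E(t)\|_{W^{k,p}}\le C_T$. It is worth emphasizing that the cancellation $\xi\cdot K(\xi)=0$ and the Osgood regularity are exactly what render the Dirac part inert in the region where $\w^E$ varies; without the hypothesis that $\w_0^E$ be constant near $z_0$ the singular field would act there directly, and uniqueness is in fact open.
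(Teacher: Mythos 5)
This theorem is not actually proved in the paper: it is imported from Lacave--Miot \cite{Lacave} (with existence going back to \cite{MaPu91,MaPu94}), and the only argument the paper supplies is the remark that, once $\w^E(t)$ is known to be constant on the ball $B(z(t),R(t))$, the field $H=K(\cdot-z(t))$ stays regular on the support of $\nabla \w^E(t)$, so that \eqref{VW-smooth} follows from standard two-dimensional Euler regularity theory \cite{Majda}. Your sketch is therefore best read as a reconstruction of the cited proofs rather than a different route, and it is a faithful one: the two pillars you isolate --- the exact cancellation $\xi\cdot K(\xi)=0$, which removes the singular term from the evolution of $|X(t,x)-z(t)|^2$ and reduces the separation estimate to an Osgood inequality driven by the log-Lipschitz field $v^E$, and the resulting smoothness of $H$ on the support of the derivatives of $\w^E$ --- are precisely the mechanisms underlying \cite{Lacave} and the paper's one-line justification of \eqref{VW-smooth}. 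Two places are thinner than the actual references, though not wrong in spirit: the separation estimate has to be carried out on the regularized approximations uniformly in the regularization parameter (the limiting flow is only available a posteriori), and the uniqueness step, which is the genuinely delicate contribution of \cite{Lacave}, requires in addition a continuation argument ensuring that the two point vortices of two putative solutions remain inside a common ball on which both vorticities are constant before the coupled Gronwall/Osgood loop can be closed. At the level of detail appropriate for a quoted theorem, your proposal is correct and consistent with what the paper relies on.
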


Theorem \ref{Lacave} assures that $H = K(\cdot - z(t))$ remains regular in the support of $\nabla \w^E(t)$. The stated regularity \eqref{VW-smooth} thus follows from that of Euler equations on $\RR^2$ (\cite{Majda}). 

The vortex-wave system \eqref{def-VW} can be rigorously derived from Euler equations by replacing the initial Dirac mass $\delta_{z_0}$ by $\epsilon^{-2}\chi_{\epsilon}$, for $\chi_\epsilon$ being the characteristic function of the ball $\{|x-z_0|\le \epsilon\}$ and taking $\epsilon \to 0$. This was done in \cite{MaPu93} (see also \cite{Bjor,GLS}). It can also be derived from Navier-Stokes equations in the small viscosity limit, provided that $\nu \le \epsilon^\alpha$ for $\alpha>0$, as done similarly for the vortex-point system \cite{Ma90,Ma98,Ma07}. In this paper, we give a direct derivation of \eqref{def-VW} as the inviscid limit of the Navier-Stokes flows \eqref{NS} with data \eqref{initial}.

\subsection{Main result}

Consider the viscous problem \eqref{NS} with initial data \eqref{initial}. Following \cite{GG,Gallay11}, we first decompose the vorticity into the so-called regular part $\w^{E,\nu}$ and irregular part $\w^{B,\nu}$, both of which are advected by the full velocity vector field $u^\nu = K \star \w^\nu$. Precisely, we write 
\begin{equation}\label{de-w}
\w^\nu=\w^{E,\nu}+\w^{B,\nu},
\end{equation}
where $\w^{E,\nu}$ and $\w^{B,\nu}$ solve 
\beq\label{req}
\bega 
\pt_t\w^{E,\nu}+u^\nu \cdot \nabla \w^{E,\nu}&=\nu\triangle\w^{E,\nu},\\
\w^{E,\nu}|_{t=0}&=\w_0^E,
\enda 
\eeq
and 
\beq \label{irreq}
\bega
\pt_t\w^{B,\nu}+u^\nu\cdot \nabla \w^{B,\nu}&=\nu\triangle \w^{B,\nu},\\
\w^{B,\nu}(t)\quad   &{\rightharpoonup} \quad \delta_{z_0}\quad \text{as}\quad t\to 0^+ .
\enda 
\eeq
Here and in what follows, the weak convergence for finite measures is understood in the following sense: $\mu_n ~{\rightharpoonup} ~\mu$ if and only if 
$$\int_{\RR^2} \phi d\mu_n \to \int_{\RR^2}\phi d\mu,$$
for all the continuous functions $\phi$ that vanish at infinity. 
A direct computation shows that the decomposition preserves the mass: 
\beq\label{mass} 
\int_{\Rt}\w^{E,\nu}(x,t)dx=\int_{\Rt}\w_0^E(x)dx, \qquad \int_{\Rt}\w^{B,\nu}(x,t)dx=1 ,
\eeq
for all positive times. We shall prove that in the inviscid limit $\w^{E,\nu}\to \w^E$ and $\w^{B,\nu}$ is concentrated near the point vortex $z(t)$, transported by $v^E$, yielding weak solutions to the vortex wave system with the same initial data $(\w_0^E,z_0)$. Precisely, our main theorem reads as follows.  

\begin{theorem}\label{mainthm}
Let $z_0\in \RR$ and $\w_0^E\in W^{4,4}(\Rt)$ that has compact support and vanishes in a neighborhood of $z_0$, and let $(z(t),\w^E(t))$ and $\w^\nu(t)$ be the unique solution to the vortex-wave system \eqref{def-VW} and to the Navier-Stokes equation \eqref{NS}, respectively, with initial data $\w_0=\w_0^E+\delta_{z_0}$. Then, there exists a time $T>0$, independent of $\nu$, such that the vorticity $\w^\nu(t)$ can be written as 
\[
\w^\nu(x,t)=\w^{E,\nu}(x,t)+\w^{B,\nu}(x,t),
\]
where $\w^{E,\nu}(t), \w^{B,\nu}(t)$ satisfy
\[
\bega 
\sup_{0\le t\le T}\|\w^{E,\nu}(t)-\w^E(t)\|_{\lbb(\Rt)}&\le C_T \nu, \\
\sup_{0\le t\le T} t^{-1}\lw\|\w^{B,\nu}(t,x)-\frac{1}{4\pi\nu t}e^{-\frac{|x-z(t)|^2}{4\nu t}}\rw\|_{L^1(\Rt)}&\le C_T \nu ,
\enda
\]
for some constant $C_T$ independent of $\nu$. In particular, $\w^{E,\nu}(t) \to \w^E(t)$ strongly in $L^4 \cap L^{4/3}$ and   
$\w^{B,\nu}(t,\cdot)\wlm \delta_{z(t)}(\cdot)$ weakly in the sense of finite measures in the inviscid limit. 
\end{theorem}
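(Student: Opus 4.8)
The plan is to establish the two bounds simultaneously by a continuous-induction (bootstrap) argument on a time interval $[0,T]$ whose length is controlled only by the $\nu$-independent constants of Theorem~\ref{Lacave} and of Gallay's stability estimates. Write $u^\nu=v^{E,\nu}+u^{B,\nu}$ with $v^{E,\nu}=K\star\omega^{E,\nu}$ and $u^{B,\nu}=K\star\omega^{B,\nu}$, so that each of $\omega^{E,\nu},\omega^{B,\nu}$ solves a linear advection--diffusion equation driven by the common field $u^\nu$ and the decomposition $\omega^\nu=\omega^{E,\nu}+\omega^{B,\nu}$ is consistent with \eqref{NS}--\eqref{initial}. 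The two quantities I would propagate are (i) $\sup_t\|\omega^{E,\nu}(t)-\omega^E(t)\|_{L^4\cap L^{4/3}}\le M\nu$, together with the fact that $\mathrm{supp}\,\omega^{E,\nu}(t)$ remains at a fixed positive distance from $z(t)$ for $\nu$ small, and (ii) that in self-similar variables $\omega^{B,\nu}(t)$ is a small perturbation of the Oseen profile centered at $z(t)$.

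For the irregular part I would pass to the self-similar frame moving with the vortex, setting $\xi=(x-z(t))/\sqrt{\nu t}$ and $\omega^{B,\nu}(x,t)=(\nu t)^{-1}W(\xi,\log t)$. Using that $K$ is homogeneous of degree $-1$, the profile solves
\[
\partial_s W=\mathcal{L}W-\tfrac1\nu\,v^W\!\cdot\!\nabla_\xi W+\sqrt{t/\nu}\,\big(\dot z-v^{E,\nu}(z+\sqrt{\nu t}\,\xi)\big)\cdot\nabla_\xi W,
\]
where $\mathcal{L}W=\Delta_\xi W+\tfrac12\xi\cdot\nabla_\xi W+W$ is the Fokker--Planck (Oseen) operator, $v^W=K\star_\xi W$, and the Gaussian $G(\xi)=\tfrac1{4\pi}e^{-|\xi|^2/4}$ is the exact steady profile. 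Writing $W=G+\widetilde W$ with $\int\widetilde W=0$ (by \eqref{mass}), the linearization is governed by $\mathcal{L}-\tfrac1\nu\Lambda$, where $\Lambda\widetilde W=v^G\cdot\nabla_\xi\widetilde W+v^{\widetilde W}\cdot\nabla_\xi G$. The essential input imported from Gallay is that $\Lambda$ acts as a fast rotation on the non-radial sector (nonzero imaginary spectrum), so that $\mathcal{L}-\tfrac1\nu\Lambda$ enjoys enhanced dissipation there; since the leading non-radial forcing is the $O(t)$ ambient strain $t\,(\xi\cdot\nabla)v^{E,\nu}(z)\cdot\nabla_\xi G$, of angular type $\ell=2$, the quasi-stationary balance damps the corresponding deformation down to the size asserted in the theorem, which I would then read off in the Gaussian-weighted $L^2$ and convert to $L^1(\xi)=L^1_x$.

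The one neutral direction is the translation mode $\nabla_\xi G\in\ker\Lambda$, which is not damped by the rotation and which measures the drift of the vortex center off $z(t)$; it is forced precisely by the mismatch $\dot z-v^{E,\nu}(z)=v^E(z)-v^{E,\nu}(z)$. I would pin this mode geometrically by tracking $\bar z^\nu(t)=\int x\,\omega^{B,\nu}\,dx$: since $\int u^{B,\nu}\omega^{B,\nu}\,dx=0$ by antisymmetry of $K$, one has $\dot{\bar z}^\nu=\int v^{E,\nu}\omega^{B,\nu}\,dx$, and comparing with $\dot z=v^E(z)$ through the concentration of $\omega^{B,\nu}$ and assumption (i) yields a Gronwall inequality keeping $\bar z^\nu$ close to $z$. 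For the regular part I would estimate $R=\omega^{E,\nu}-\omega^E$, which solves
\[
\partial_t R+u^\nu\cdot\nabla R=\nu\Delta R+\nu\Delta\omega^E-\big[(v^{E,\nu}-v^E)+(u^{B,\nu}-H)\big]\cdot\nabla\omega^E .
\]
Here $\nu\Delta\omega^E$ is $O(\nu)$ by the $W^{4,4}$ bound \eqref{VW-smooth}; the self-interaction $v^{E,\nu}-v^E=K\star R$ is absorbed by Gronwall after the Biot--Savart estimate $\|K\star R\|_{L^\infty}\lesssim\|R\|_{L^4\cap L^{4/3}}$; and the cross term $u^{B,\nu}-H=K\star(\omega^{B,\nu}-\delta_{z(t)})$, evaluated on $\mathrm{supp}\,\nabla\omega^E$ (at distance $\ge R(t)>0$ from $z(t)$ by Theorem~\ref{Lacave}), is small because $\omega^{B,\nu}$ is concentrated: the Oseen contribution leaves only an exponentially small far-field tail and the remainder $\omega^{B,\nu}-$Oseen contributes $O(\nu t)$ through the smooth far field of $K$. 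Propagating $R$ by the advection--diffusion semigroup (maximum principle together with $L^p$ energy estimates on the transported support) then recovers (i) with rate $C_T\nu$.

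The main obstacle is exactly this two-way coupling through the singular velocity $H=K(\cdot-z(t))$ near the vortex: the regular estimate needs $\omega^{B,\nu}$ to be a tightly concentrated quasi-Oseen vortex sitting on the prescribed trajectory, while the concentration estimate needs $v^{E,\nu}$ smooth and $O(\nu)$-close to $v^E$ at $z(t)$ and needs the center tracked with the precision required by the stated rate. The crux is therefore the quantitative stability of the Oseen vortex against the $O(1)$ ambient strain, namely the enhanced-dissipation estimate for $\mathcal{L}-\tfrac1\nu\Lambda$ taken from Gallay, combined with the separate geometric control of the neutral translation mode; closing the coupled bootstrap with all constants independent of $\nu$ on a fixed interval $[0,T]$ is the final, delicate, point.
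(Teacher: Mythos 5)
Your overall architecture coincides with the paper's: the same decomposition \eqref{de-w}--\eqref{irreq}, the self-similar variable $\xi=(x-\cdot)/\sqrt{\nu t}$, the Oseen profile $G$ as leading order, Gallay's inversion of $\Lambda$ on the non-radial sector to absorb the $O(1)$ strain forcing of angular type $\ell=2$ (this is exactly the corrector $w_{2,a}$ of Lemma \ref{lem-w2a}, built from Proposition \ref{prop-Gallay11}; note that $\tfrac1\nu\Lambda$ is skew-adjoint in $L^2_p$, so the mechanism is $\nu$-uniform invertibility on the orthogonal complement of $\ker\Lambda$ rather than ``enhanced dissipation''), a Gronwall estimate in $L^4\cap L^{4/3}$ for the regular remainder using the Biot--Savart bound, and a coupled bootstrap. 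You have also correctly located the crux: the neutral translation mode $\nabla_\xi G\in\ker\Lambda$, forced by the mismatch $\dot z - v^{E,\nu}(z)$.

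Your treatment of that mode, however, has a genuine gap. You keep the frame centered at $z(t)$ with $\dot z=v^E(z)$ and propose to control the translation mode ``geometrically'' via the first moment $\bar z^\nu(t)=\int x\,\w^{B,\nu}\,dx$. Knowing $|\bar z^\nu-z|=O(\nu t)$ does not remove the $\ell=1$ forcing from the profile equation: with $v^{E,\nu}(z)-\dot z=O(\nu)$ (the accuracy your regular-part estimate provides), the residual $tR_1$ contains a term of size $\sqrt{\nu t}\,|\nabla_\xi G|$, which, after dividing by $\nu t$ in the equation for the remainder $\wb$ (recall the target is $w_2-G=O(\nu t)$), is a forcing of size $(\nu t)^{-1/2}$ --- not integrable against the $t\,d/dt$ structure, so the weighted energy estimate does not close. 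This is precisely the difficulty the paper flags around \eqref{inter} and resolves by (a) recentering the self-similar variable at a corrected trajectory $\wtd z(t)$ driven by $\wtd v^E=K\star(\w^E+\nu w_{1,a})$, where $w_{1,a}$ in \eqref{w1a} absorbs the viscous diffusion of the regular part, and (b) upgrading the regular-part decomposition to $\w^{E,\nu}=\wtd \w^E+\nu^{3/2}\ve$, so that the surviving $\ell=1$ forcing is $t^{-1/2}\,\ve\cdot\nabla G$ (the term $\mathcal{E}_8$), which does close. Your $O(\nu)$ accuracy for $v^{E,\nu}-v^E$ is one half-power of $\nu$ short of what the vortex estimate needs. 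If you wish to keep the center-of-mass idea, you must still recenter the Gaussian at $\bar z^\nu$ (or project the profile equation off $\ker\Lambda$ and estimate the kernel component by the moment identity), and you must show that the advecting field minus $\dot{\bar z}^\nu$ at the center is $o(\nu)$ --- which forces essentially the corrector $w_{1,a}$ back into the regular part.
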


Theorem \ref{mainthm} derives the vortex-wave system \eqref{def-VW} as an inviscid limit of Navier-Stokes flows on the whole plane, complementing the earlier derivation \cite{MaPu93,Bjor,GLS} from Euler equations. In addition, we obtain:
$$ T \ge \min \Big\{ T_*^-, \frac{1}{5\|\nabla v^E \|_{L^\infty}} \Big\}$$
for $T_*$ being the smallest time when the point vortex $z(t)$ meets the support of $\w^E(s)$ for some $s\in [0,t]$, recalling from Theorem \ref{Lacave} that $w(t)$ never meets the support of $\w^E(t)$ for all times. 
See Proposition \ref{prop-tWE} and Remark \ref{rem-time1}. 

Let us now discuss some difficulties in proving the theorem. First of all, the initial data containing a Dirac  mass are too singular to perform a direct proof from the standard $L^2$ energy estimates. One then needs to construct a good approximation of solutions to treat the singular part, and control the remainder. The difficulty arises due to the presence of an vortex-wave interaction term of the form 
\beq\label{inter}
v^{E,\nu}(t,x)\cdot \nabla_x \lw(\frac{1}{4\pi\nu t}e^{-\frac{|x-z(t)|^2}{4\nu t}}\rw)
.\eeq
Formally, this term blows up when $x$ is near the point vortex $z(t)$ and $\nu t\to 0$. To treat this singularity, we follow \cite{Gallay11} to work in the vortex scaling variable, construct approximate solutions, and perform weighted energy estimates to control the remainder. However, the weighted energy estimates with the scaling variable $\xi=\frac{x-z(t)}{\sqrt{\nu t}}$ used in \cite{Gallay11} are not enough to treat the interaction term \eqref{inter}, as it leaves a remainder of order one, but not smaller. To overcome this difficulty, we introduce an \textit{approximate viscous} vortex wave system (Section \ref{sec-appVW}), along with the new point vortex $\wtd z(t)=z(t)+O(\nu t)$  and the scaled variable $\xi=\frac{x-\wtd z(t)}{\sqrt{\nu t}}$ in order to close the estimate.

Lastly, we remark that we assume the initial vorticity to be $\delta_{z_0}+\w_0^E$, where $\w_0^E$ is smooth and compactly supported away from the point vortex $z_0$. The regularity is needed in the construction of the high order approximation of solutions. It would be interesting to further combine our analysis with the viscous approximation near vortex-patch solutions constructed in \cite{Sueur} to treat the case when $\w_0^E \in L^1 \cap L^\infty$.

\subsection{Notations}
We will denote $A\lesssim B$ to mean that $|A|\le C_0 |B|$ for some universal constant $C_0>0$ independent of the viscosity $\nu$. We write $f=O(g)$ to mean that $f\lesssim g$, or simply $O(g)$ to mean that the term can be bounded by $C_0 |g|$ for some constant $C_0>0$ independent of $\nu$. 
We define the norm $\|\cdot\|_{\lbb}$ and $\|\cdot\|_{L^1\cap L^\infty}$ of a function $\w(x)$ in $\Rt$ to be 
\[
\|\w\|_{\lbb}=\|\w\|_{L^4}+\|\w\|_{L^{4/3}},\qquad \|\w\|_{L^1\cap L^\infty}=\|\w\|_{L^1}+\|\w\|_{L^\infty}
\]
We also denote by $\mathfrak{m}(\cdot)$ the Lebesgue measure on $\Rt$.

~\\
{\bf Acknowledgement:} The authors would like to thank Thierry Gallay and Christophe Lacave for their many insightful discussions on the subject. The research was supported by the NSF under grant DMS-1764119 and by an AMS Centennial Fellowship. Part of this work was done while the authors were visiting the Department of Mathematics and the Program in Applied and Computational Mathematics at Princeton University.

\section{Approximate vortex wave system}\label{sec-appVW}

Let $(z(t),\w^E)$ be the global solution to the vortex-wave system \eqref{def-VW} with initial data $\w^E_0\in W^{4,4}$ that has compact support and vanishes in a neighborhood of $z_0$.  We introduce an \textit{approximate viscous} vortex-wave system $(\wtd z(t), \wtd\w^E)$, given by
\beq \label{appvor}
\begin{aligned}
\wtd \w^E(x,t)&=\w^E(x,t)+\nu w_{1,a}(x,t)
\\
\pt_t\wtd z=\wtd v^E(\wtd z(t),t) &= K\star \wtd \w^E (\wtd z(t),t), 
\qquad\wtd z(0)=z_0,
\end{aligned}
\eeq
where the added vorticity component $w_{1,a}$ solves 
\beq \label{w1a}
\pt_t w_{1,a}+\lw(v^E+\frac{1}{\sqrt{\nu t}}v^G\lw(\frac{x-z(t)}{\sqrt{\nu t}}\rw)\rw)\cdot\nabla w_{1,a}+v_{1,a}\cdot \nabla \w^E = \triangle \w^E
\eeq
with zero initial data. Here and in what follows, velocity and vorticity are defined through the Biot-Savart law \eqref{BS-law}. For instance, $v_{1,a} = K \star w_{1,a}$ and $v^G(\xi)=\frac{1}{2\pi}\frac{\xi^\perp}{|\xi|^2}(1-e^{-|\xi|^2/4})$.

We obtain the following simple proposition. 

\begin{proposition}\label{prop-tWE} 
Let $T_*$ be defined by 
\begin{equation}\label{def-Tstar} T_* = \inf_{t\ge 0} \Big \{ t~:~z(t) \in \cup_{0\le s\le t}\text{supp}(\w^E(s))\Big\},\end{equation}
with $T_* = \infty$ if $z(t)$ never meets the support of $\w^E(s)$ for $s\in [0,t]$. 
Then, for any $T <T_*$, the unique smooth solution $w_{1,a}(t)$ of \eqref{w1a} exists on $[0,T]$, has compact support, vanishes in a neighborhood of $z(t)$, and satisfies
\beq\label{est-w1a}
\mathfrak{m}\lw(\text{supp}(w_{1,a}(t))\rw)+\|w_{1,a}(t)\|_{W^{2,4}(\Rt)}+ \|\partial_t w_{1,a}(t)\|_{L^\infty(\Rt)}+\|v_{1,a}(t)\|_{W^{2,\infty}(\Rt)}\le C_T,
\eeq
for $t \in [0,T]$ and for some constant $C_T$ independent of $\nu$. In addition, there holds
\begin{equation}\label{est-z2}
|\wtd z(t)-z(t)|\le C_T \nu t\qquad \text{for any}\quad t\in [0,T].
\end{equation}
Here, $\mathfrak{m}$ denotes the Lebesgue measure on $\Rt$.
\end{proposition}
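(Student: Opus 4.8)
\section*{Proof plan for Proposition \ref{prop-tWE}}

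The plan is to regard \eqref{w1a} as a \emph{linear, nonlocal transport equation}
\[
\pt_t w_{1,a}+b\cdot\nabla w_{1,a}=\triangle\w^E-v_{1,a}\cdot\nabla\w^E,\qquad b(x,t):=v^E(x,t)+\frac{1}{\sqrt{\nu t}}v^G\lw(\frac{x-z(t)}{\sqrt{\nu t}}\rw),
\]
with zero data, and to exploit the fact that its solution must stay away from the point vortex $z(t)$, where $b$ is smooth with bounds independent of $\nu$. The first observation is the algebraic identity
\[
\frac{1}{\sqrt{\nu t}}v^G\lw(\frac{x-z(t)}{\sqrt{\nu t}}\rw)=H(x)\lw(1-e^{-|x-z(t)|^2/(4\nu t)}\rw),\qquad H:=K(\cdot-z(t)),
\]
so that $b=v^E+H$ up to a radially directed correction that is concentrated in a $\sqrt{\nu t}$-neighborhood of $z(t)$ and exponentially small away from it. Note also that $b$ is divergence free, and that the whole right-hand side of the equation is supported in $\text{supp}(\w^E(s))$ at each time $s$, since both $\triangle\w^E$ and $\nabla\w^E$ vanish off $\text{supp}(\w^E)$; this will hold irrespective of the nonlocal coupling through $v_{1,a}$.

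First I would localize the support. For $T<T_*$ the function $(s,t)\mapsto\mathrm{dist}(z(t),\text{supp}(\w^E(s)))$ is continuous and strictly positive on the compact set $\{0\le s\le t\le T\}$ (by the definition of $T_*$, and since $\w^E(t)$ vanishes near $z(t)$ by Theorem \ref{Lacave}); call its minimum $\delta_T>0$. Along a characteristic $\dot X=b(X,t)$ I compute, using $\dot z=v^E(z,t)$ and the exact orthogonality $(X-z)\cdot H(X)=0$,
\[
\frac{d}{dt}|X-z|^2=2(X-z)\cdot\big(b(X,t)-v^E(z,t)\big)=2(X-z)\cdot\big(v^E(X,t)-v^E(z,t)\big),
\]
so that the singular rotational part of $b$ drops out \emph{exactly}, and $\frac{d}{dt}|X-z|^2$ is bounded in absolute value by $2\|\nabla v^E\|_{L^\infty}|X-z|^2$. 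A two-sided Gr\"onwall argument then keeps every characteristic issued from $\text{supp}(\w^E(s))$ inside an annulus $\delta_T'\le|x-z(t)|\le M_T$ for $t\in[0,T]$, with $\delta_T',M_T$ independent of $\nu$. By the Duhamel (method-of-characteristics) representation, $w_{1,a}(t)$ is supported in the union of these characteristic images, hence has compact support of controlled measure and vanishes in a fixed neighborhood of $z(t)$. The exponentially small, $z$-centered correction in $b$ never enters this radial computation, which is precisely what makes the conclusion uniform in $\nu$.

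With the solution confined to $\{|x-z(t)|\ge\delta_T'\}$, I would close uniform-in-$\nu$ bounds by energy estimates. On this region $b$ and all its spatial derivatives are bounded independently of $\nu$, because the correction contributes at most $(\nu t)^{-k}e^{-(\delta_T')^2/(4\nu t)}\to0$. Running $L^4$ estimates on the equation and on its first and second spatial derivatives, the transport term vanishes since $\nabla\cdot b=0$, the commutators $[\nabla^j,b\cdot\nabla]w_{1,a}$ are controlled by $\|b\|_{W^{2,\infty}(\text{supp})}$ and lower derivatives of $w_{1,a}$, the forcing terms $\triangle\w^E$ and $\nabla\w^E$ are $\nu$-independent and bounded thanks to $\w^E_0\in W^{4,4}$ together with \eqref{VW-smooth}, and the nonlocal term is handled by the Biot--Savart bounds $\|v_{1,a}\|_{W^{k+1,4}}\lesssim\|w_{1,a}\|_{W^{k,4}}$ (plus the compact support for the low norms). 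Gr\"onwall gives $\|w_{1,a}(t)\|_{W^{2,4}}\le C_T$. Then $v_{1,a}=K\star w_{1,a}\in W^{3,4}(\Rt)\hookrightarrow W^{2,\infty}(\Rt)$ yields $\|v_{1,a}(t)\|_{W^{2,\infty}}\le C_T$, while reading $\pt_t w_{1,a}$ off the equation and using $W^{2,4}\hookrightarrow W^{1,\infty}$ gives $\|\pt_t w_{1,a}(t)\|_{L^\infty}\le C_T$. This establishes \eqref{est-w1a}; existence and uniqueness of the smooth solution then follow from the same linear a priori estimates by a standard approximation.

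Finally, for \eqref{est-z2} I subtract the two characteristic ODEs: since $\wtd v^E=K\star\wtd\w^E=v^E+\nu v_{1,a}$,
\[
\frac{d}{dt}(\wtd z-z)=v^E(\wtd z,t)-v^E(z,t)+\nu\, v_{1,a}(\wtd z,t),
\]
and since $\w^E\in W^{4,4}(\Rt)$ is H\"older continuous with compact support, $v^E$ is globally Lipschitz with $\|\nabla v^E\|_{L^\infty}\le C_T$; Gr\"onwall from $\wtd z(0)=z(0)=z_0$ together with $\|v_{1,a}\|_{L^\infty}\le C_T$ yields $|\wtd z(t)-z(t)|\le C_T\nu t$. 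The main obstacle is securing the estimates uniformly in $\nu$ in the presence of the singular field $\frac{1}{\sqrt{\nu t}}v^G$; this is overcome entirely by the support localization of the second paragraph, whose crux is the exact orthogonality $(X-z)\cdot H(X)=0$ removing the singular rotation from the radial dynamics of the characteristics.
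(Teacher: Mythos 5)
Your proposal is correct and follows essentially the same route as the paper: confine $\text{supp}(w_{1,a}(t))$ to a region $|x-z(t)|\ge d_T$ on which $\frac{1}{\sqrt{\nu t}}v^G\big(\frac{x-z(t)}{\sqrt{\nu t}}\big)$ and its derivatives are bounded uniformly in $\nu$, close the $W^{2,4}$ bound by Gr\"onwall together with the Biot--Savart estimate for $v_{1,a}$, and obtain \eqref{est-z2} by subtracting the two trajectory equations. The only real difference is that the paper asserts the support localization in one line (``due to the transport structure''), whereas you justify it through the characteristic ODE and the exact orthogonality $(X-z)\cdot v^G\big(\frac{X-z}{\sqrt{\nu t}}\big)=0$, which is a careful elaboration of the same idea rather than a new argument.
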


\begin{corollary}\label{cor-tWE} Let $T_*$ be defined as in \eqref{def-Tstar}. 
For any $T<T_*$, $\wtd \w^E(t)$ has compact support, vanishes in a neighborhood of $\wtd z(t)$, and satisfies
\beq\label{est-w1a}
\mathfrak{m}\lw(\text{supp}(\wtd \w^E(t))\rw)+\|\wtd \w^E(t)\|_{W^{2,4}(\Rt)}+ \|\partial_t \wtd \w^E(t)\|_{L^\infty(\Rt)}+\|\wtd v^E(t)\|_{W^{2,\infty}(\Rt)}\le C_T,
\eeq
for $t \in [0,T]$ and for some constant $C_T$ independent of $\nu$. 
\end{corollary}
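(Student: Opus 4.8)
The plan is to exploit the linearity of the decomposition $\wtd \w^E = \w^E + \nu w_{1,a}$ together with the linearity of the Biot--Savart law, which gives $\wtd v^E = K\star \wtd\w^E = v^E + \nu v_{1,a}$, and then to add the bounds already available for $(\w^E, v^E)$ and for $(w_{1,a}, v_{1,a})$ term by term. All constants below are permitted to depend on $T$ but not on $\nu$, and we take $\nu \le 1$ throughout, so that the prefactor $\nu$ in front of every $w_{1,a}$-contribution is harmless.

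For the smooth part, recall from \eqref{VW-smooth} in Theorem \ref{Lacave} that $\w_0^E \in W^{4,4}$ yields $\sup_{0\le t\le T}\|\w^E(t)\|_{W^{4,4}} \le C_T$, hence in particular $\|\w^E(t)\|_{W^{2,4}} \le C_T$; differentiating the transport equation in \eqref{def-VW} gives $\pt_t \w^E = -(v^E + H)\cdot \nabla \w^E$, and since $\nabla\w^E(t)$ is supported away from $z(t)$ the kernel $H = K(\cdot - z(t))$ is smooth there, so that $\|\pt_t\w^E(t)\|_{L^\infty}\le C_T$; finally, applying the regularity theory for the Biot--Savart law to the compactly supported, H\"older-continuous vorticity $\w^E(t)$ yields $\|v^E(t)\|_{W^{2,\infty}}\le C_T$ (see the remark following Theorem \ref{Lacave}). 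Combining these with the estimates of Proposition \ref{prop-tWE} and the triangle inequality, for $\nu\le 1$ we obtain
\[
\begin{aligned}
\|\wtd\w^E(t)\|_{W^{2,4}} &\le \|\w^E(t)\|_{W^{2,4}} + \nu\|w_{1,a}(t)\|_{W^{2,4}} \le C_T,\\
\|\pt_t\wtd\w^E(t)\|_{L^\infty} &\le \|\pt_t\w^E(t)\|_{L^\infty} + \nu\|\pt_t w_{1,a}(t)\|_{L^\infty} \le C_T,\\
\|\wtd v^E(t)\|_{W^{2,\infty}} &\le \|v^E(t)\|_{W^{2,\infty}} + \nu\|v_{1,a}(t)\|_{W^{2,\infty}} \le C_T,
\end{aligned}
\]
uniformly on $[0,T]$, which are precisely the three norm bounds claimed.

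For the support, $\text{supp}(\wtd\w^E(t)) \subseteq \text{supp}(\w^E(t))\cup \text{supp}(w_{1,a}(t))$, and both sets are compact with Lebesgue measure at most $C_T$ (the first by Theorem \ref{Lacave}, the second by Proposition \ref{prop-tWE}), whence $\mathfrak{m}(\text{supp}(\wtd\w^E(t)))\le C_T$. It remains to check that $\wtd\w^E(t)$ vanishes near $\wtd z(t)$. By Theorem \ref{Lacave} (where the preserved constant value is $0$ because $\w_0^E$ vanishes near $z_0$) and by Proposition \ref{prop-tWE}, both $\w^E(t)$ and $w_{1,a}(t)$ vanish on a ball $B(z(t),r_0)$; since $[0,T]$ is compact and the two vanishing radii are positive, we may take $r_0>0$ independent of $t\in[0,T]$. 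By \eqref{est-z2}, $|\wtd z(t)-z(t)|\le C_T\nu t\le C_T\nu T$, so once $\nu$ is small enough that $C_T\nu T < r_0/2$ we have $B(\wtd z(t),r_0/2)\subset B(z(t),r_0)$, and therefore $\wtd\w^E(t)=\w^E(t)+\nu w_{1,a}(t)$ vanishes on $B(\wtd z(t),r_0/2)$.

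Being a corollary, there is no substantial obstacle: the only points requiring a little care are the uniform-in-$t$ lower bound $r_0$ on the radius of the vanishing neighborhood and the uniform smallness of the shift $\wtd z(t)-z(t)=O(\nu t)$, both of which follow from the compactness of $[0,T]$ together with the quantitative estimates already furnished by Theorem \ref{Lacave} and Proposition \ref{prop-tWE}.
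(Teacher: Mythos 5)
Your proof is correct and follows exactly the route the paper intends: the paper's own proof of this corollary is the one-line observation that it is a direct consequence of Proposition \ref{prop-tWE} and Theorem \ref{Lacave}, and your argument simply fills in the triangle-inequality, Biot--Savart linearity, and support/shift details (including the implicit smallness of $\nu$ needed so that $\wtd z(t)$ stays inside the vanishing neighborhood of $z(t)$, which the paper also leaves tacit).
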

\begin{proof}
The corollary is a direct consequence of Proposition \ref{prop-tWE} and Theorem \ref{Lacave}. 
\end{proof}

\begin{proof}[Proof of Proposition \ref{prop-tWE}] Recall from Theorem \ref{Lacave} that $\w^E(t)$ has compact support and vanishes in a neighborhood of $z(t)$. This remains valid for $w_{1,a}(t)$ for small times, due to the transport structure of \eqref{w1a}. Precisely, $w_{1,a}(t)$ is supported in $\cup_{0\le s\le t}\text{supp}(\w^E(s))$. Since $z(t) \not \in \text{supp}(\w^E(t))$ for all positive times, we have $T_*>0$ by continuity. Thus, for any $T<T_*$, there is a positive distance $d_T$ so that 
\begin{equation}\label{def-dT}
|x-z(t)|\ge d_T>0 \end{equation}
 for all $x\in \text{supp}\lw(w_{1,a}(t)\rw)$ and $0\le t\le T$, which yields
\[
\lw|\frac{1}{\sqrt{\nu t}}v^G\lw(\frac{x-z(t)}{\sqrt{\nu t}}\rw)\rw|=\frac{1}{2\pi|x-z(t)|}\lw(1-e^{-\frac{|x-z(t)|^2}{4\nu t}}\rw)\le \frac{1}{2\pi|x-z(t)|}\le \frac{1}{2\pi d_T} .
\]
Similar estimates hold for derivatives of $v^G(\cdot)$ for $x$ away from $z(t)$. It follows from \eqref{w1a} that 
\[\bega
\|w_{1,a}(t)\|_{L^4}&\le \int_0^t \lw(\|\triangle \w^E(s)\|_{L^4}+\|v_{1,a}(s)\|_{L^\infty}\|\nabla \w^E(s)\|_{L^4}\rw)ds\\
&\lesssim \int_0^t (1+\|v_{1,a}(s)\|_{L^\infty})ds,
\enda
\]
which yields the estimate on $w_{1,a}$, upon using the elliptic estimate $\|v_{1,a}\|_{L^\infty}\lesssim \|w_{1,a}\|_{\lbb}$ and the fact that $w_{1,a}$ is compactly supported. The derivative estimates follow similarly. 

Finally, let us prove the estimate on $\wtd z(t)$. By definition, we write 
\beq\label{est82}
\begin{cases}
\wtd z(t)&=z_0+\int_0^t \lw(v^E(\wtd z(s),s)+\nu v_{1,a}(\wtd z(s),s)\rw)ds,\\
z(t)&=z_0+\int_0^t v^E( z(s),s)ds,
\end{cases}
\eeq
which gives \beq \label{est61}
\bega
|\wtd z(t)-z(t)|&\le \int_0^t \lw|(v^E(\wtd z(s),s)-v^E(z(s),s))\rw|ds+\nu\int_0^t |v_{1,a}(\wtd z(s),s)|ds\\
&\le \int_0^t \|\nabla v^E(s)\|_{L^\infty} |\wtd z(s)-z(s)| \; ds+\nu t \sup_{0\le s\le t}\|v_{1,a}(s)\|_{L^\infty} .
\enda
\eeq
Applying the Gronwall's lemma gives \eqref{est-z2}. 
\end{proof}

\section{Inviscid limit for the irregular part}\label{sec3}
In this section, we give estimates on the irregular part of vorticity $\w^{B,\nu}$, solving \eqref{irreq}. Let us recall the equation:
\beq\label{req1}
\bega 
\pt_t\w^{B,\nu}+u^\nu \cdot \nabla \w^{B,\nu}&=\nu\triangle\w^{B,\nu},\\
\w^{B,\nu}|_{t=0}&=\delta_{z_0}.
\enda 
\eeq
Here $u^\nu=v^{E,\nu}+v^{B,\nu}$ is the velocity field for the full Navier-Stokes equations. 
Following \cite{Gallay11}, we introduce the change of variables 
$$\xi=\dfrac{x-\wtd z(t)}{\sqrt{\nu t}}$$
and write 
\beq\label{change}
\begin{aligned}
v^{B,\nu}(x,t)&=\frac{1}{\sqrt{\nu t}}v_2(\xi,t),
\qquad \w^{B,\nu}(x,t)=\frac{1}{\nu t}w_2(\xi,t).\\
\end{aligned}
\eeq
Here, we recall that $\wtd z(t)$ to be the solution to the approximate vortex wave system, given in \eqref{appvor}.
Note that the change of variables is consistent with the Biot-Savart law: $v_2 = K \star_\xi w_2$. Putting the Ansatz into the equation \eqref{irreq} for $\w^{B,\nu}$, we get the following equation 
\beq \label{ir-eq}
\bega
&\Phi(w_2,v^{E,\nu}): = \lw(t\pt_t -\mathcal{L}\rw)w_2 +\sqrt{\dfrac{t}{\nu}} (v^{E,\nu}(\wtd z(t)+ \xi \sqrt{\nu t},t)- \partial_t \wtd z(t))\cdot \nabla_\xi w_2+\frac{1}{\nu}v_2\cdot \nabla_\xi w_2 =0,
\enda
\eeq
where $\mathcal{L}$ is defined by 
$$\mathcal{L}w_2:=\triangle_\xi w_2+\frac{1}{2}\xi\cdot \nabla_\xi w_2+w_2.$$

In the vanishing viscosity limit, we expect that the viscous regular velocity remains close to the inviscid one: $v^{E,\nu} \to v^E$, and hence the irregular part should tend to the so-called Lamb-Oseen vortex, which is defined by 
 \[
 G(\xi)=\dfrac{1}{4\pi}e^{-|\xi|^2/4},\qquad v^G(\xi)=\dfrac{1}{2\pi}\dfrac{\xi^\perp}{|\xi|^2} \Big(1-e^{-|\xi|^2/4}\Big)
 . \]
It follows that $\mathcal{L}G = 0$ and $v^G \cdot \nabla_\xi G = 0$. Therefore, the pair $(G(\xi),v^{E,\nu})$ solves \eqref{ir-eq}, up to the following error term $t R_1(\xi,t)$, with 
\beq\label{def-R1}
R(\xi,t): =\frac{1}{\sqrt{\nu t}}\lw( v^{E,\nu}(\wtd z(t) + \xi\sqrt{\nu t},t) - \wtd v^E(\wtd z(t),t)\rw)\cdot \nabla G 
\eeq
which does not vanish in the inviscid limit, upon recalling that $\pt_t\wtd z(t)=\wtd v^E(\wtd z(t),t)$. 
Roughly speaking, $R= \mathcal{O}(1)$ in the small viscosity limit.  

We shall construct better approximate solutions to the equation \eqref{ir-eq}. Here we stress that the equation \eqref{ir-eq} involves two unknown functions $w_2, v^{E,\nu}$ which are coupled through the full velocity $u^\nu$. To leading order, let us take $v^{E,\nu}_{app}=\wtd v^E$ for $\wtd v^E$ solving the approximate vortex-wave system \eqref{appvor} and 
\beq \label{guess0}
w_{2,\text{app}}(\xi,t)=G(\xi)+(\nu t) w_{2,a}(\xi,t)
\eeq 
where $w_{2,a}$ to be defined later. The pair $(w_{2,\text{app}},v^{E,\nu}_{app})$ thus solves \eqref{ir-eq}, leaving an error of the form  
\begin{equation}\label{1st-app}
\bega
\Phi(w_{2,\text{app}},v^{E,\nu}_{app})
&=t (\Lambda +\nu (1-\mathcal{L}))w_{2,a} + \nu t^2 \partial_t w_{2,a} +\nu t^2 v_{2,a}\cdot \nabla w_{2,a}
\\&\quad +\sqrt{\nu}t^{3/2}(\wtd v^{E}(\wtd z(t)+ \xi \sqrt{\nu t},t)-\wtd v^E(\wtd z(t),t))\cdot \nabla w_{2,a} + t R_1(\xi,t),
\enda 
\end{equation}
where $R_1(\xi,t)$ is defined as in \eqref{def-R1} with $v^{E,\nu}_{app} = \wtd v^E$, and 
\[
\bega
\Lambda w&:=v^G\cdot \nabla_\xi w+v\cdot \nabla_\xi G, \qquad v = K \star w.
\enda
\]
To treat the order one remainder $R_1(\xi,t)$, we first solve $(\Lambda +\nu (1-\mathcal{L}))w_{2,a}  = - R_1$ to leading order in $\nu$. We recall the following proposition from \cite{Gallay11}, Lemma 5 and Remark 1.  

\begin{proposition}\label{prop-Gallay11} 
Let $z=z(\xi)$ be a function of the form
\[
z(\xi)=a_1(r)\cos(2\theta)+a_2(r)\sin(2\theta)+a_3(r)\cos(3\theta)+a_4(r)\sin(3\theta)
\]
for $\xi=re^{i\theta}$. Assume that the coefficients satisfy \[
\sum_{i=1}^4(|a_i(r)|+|a_i'(r)|)\le C_0 P(r)e^{-r^2/4}\qquad \forall r>0.
\]
for some polynomial $P(r)$.
Then for any $\nu>0$, there exists a unique solution $w^\nu$ to the elliptic equation 
\[
\Lambda w^{\nu}+\nu(1-\mathcal L)w^\nu=z
\]
such that 
\[
|w^\nu(\xi)|+|\nabla w^\nu(\xi)|\le C_\gamma e^{-\gamma |\xi|^2/4}
\]
for any $\gamma\in (0,1)$ and for some constant $C_\gamma$ that is independent of $\nu$.
\end{proposition}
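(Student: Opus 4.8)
The plan is to exploit the rotational symmetry of the operators to reduce the equation to a family of scalar radial problems, solve each by combining the invertibility of the transport operator $\Lambda$ with the coercivity of the viscous part, and finally propagate Gaussian decay. First I would observe that $\Lambda$, $\mathcal{L}$ and the identity all commute with rotations, hence preserve the angular Fourier modes $e^{in\theta}$. Since the source $z$ is supported on the modes $n=\pm2,\pm3$, and by reality it suffices to treat $n=2,3$, the equation $\Lambda w^\nu+\nu(1-\mathcal{L})w^\nu=z$ decouples into finitely many problems, one per mode, each a scalar equation in the radial variable $r$; the solution $w^\nu$ inherits the same angular structure.

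The core is the $\nu=0$ operator $\Lambda$ restricted to a single mode $n$. Writing $w=w_n(r)e^{in\theta}$ and using the Biot--Savart law on this mode, $\Lambda_n$ is expressed through the angular velocity $\Omega(r)=-\frac{1}{2\pi r^2}(1-e^{-r^2/4})$ of the Oseen profile and the stream function $\psi_n$ solving $-\Delta_n\psi_n=w_n$, where $\Delta_n=\partial_r^2+\frac1r\partial_r-\frac{n^2}{r^2}$. Substituting $w_n=-\Delta_n\psi_n$ turns $\Lambda_n w_n=z_n$ into a perturbed-Bessel second order ODE for $\psi_n$ with leading coefficient proportional to $\Omega(r)$; since $\Omega(r)$ is strictly negative and bounded on $(0,\infty)$ the equation is nondegenerate in the interior. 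The only obstruction to invertibility is a decaying homogeneous solution: for $n=0$ one has $\Lambda\equiv0$, and for $n=\pm1$ the translation invariance of the Oseen vortex forces $\partial_{x_j}G\in\ker\Lambda$; but for $|n|\ge2$ no such symmetry-induced kernel exists, and $\Lambda_n$ is invertible with a bounded inverse in the relevant weighted spaces. This is exactly the spectral information about the linearized Oseen operator that I would import from the Gallay--Wayne analysis.

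With $\Lambda_n$ invertible at $\nu=0$, I would treat the viscous term $\nu(1-\mathcal{L})=\nu(-\Delta-\tfrac12\xi\cdot\nabla)$ as a perturbation. In the Gaussian-weighted space $L^2(\Rt,G^{-1}d\xi)$ the operator $1-\mathcal{L}$ is self-adjoint with $\mathcal{L}\le0$ and $\mathcal{L}G=0$, hence coercive ($1-\mathcal{L}\ge1$), while $\Lambda$ is skew-symmetric: the transport by the divergence-free azimuthal field $v^G$ is skew against the radial weight $G^{-1}$, and the nonlocal term $w\mapsto (K\star w)\cdot\nabla G$ is skew because $(-\Delta)^{-1}$ is symmetric and commutes with $\partial_\theta$. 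Thus $\Lambda+\nu(1-\mathcal{L})$ is accretive and invertible for every fixed $\nu>0$ by Lax--Milgram, giving existence and uniqueness of $w^\nu$. The delicate point---and the main obstacle---is that accretivity alone yields only $\|w^\nu\|\lesssim\nu^{-1}\|z\|$, useless as $\nu\to0$; the sought $\nu$-independent estimate must instead come from the invertibility of $\Lambda$ on modes $\ge2$. Because the diffusion raises the order of the equation, the perturbation is singular, so I would prove the uniform bound by combining the coercivity of $\nu(1-\mathcal{L})$ (which tames the small-scale and large-$\xi$ behavior) with the $\nu$-independent inverse of $\Lambda_n$ on the finitely many active modes, checking that the two estimates patch with constants that do not blow up as $\nu\to0$.

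Finally I would upgrade to the pointwise bound $|w^\nu(\xi)|+|\nabla w^\nu(\xi)|\le C_\gamma e^{-\gamma|\xi|^2/4}$ for any $\gamma\in(0,1)$. Starting from the hypothesis $\sum_i(|a_i(r)|+|a_i'(r)|)\lesssim P(r)e^{-r^2/4}$ on the source, I would run a weighted energy estimate on $w^\nu$ directly with weight $e^{\gamma r^2/4}$: transport by the divergence-free $v^G$ preserves any radial Gaussian weight, and the surviving commutator terms from the drift $\tfrac12\xi\cdot\nabla$ and the diffusion have a favorable sign precisely when $\gamma<1$, which is the structural reason the borderline $\gamma=1$ is lost. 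The nonlocal Biot--Savart term, though it produces only algebraic tails in $\psi_n$, acts on a Gaussian source and so contributes a right-hand side that is again Gaussian; elliptic regularity and Sobolev embedding then convert the weighted $L^2$ control into the claimed pointwise bound, with $C_\gamma$ independent of $\nu$. The two places I expect genuine work are establishing the $\Lambda$-invertibility on modes $\ge2$ and, above all, securing the uniform-in-$\nu$ estimate across the singular perturbation.
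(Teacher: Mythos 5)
First, a point of reference: the paper does not prove this proposition at all --- it is imported verbatim from Gallay's work (the text preceding the statement says ``We recall the following proposition from \cite{Gallay11}, Lemma 5 and Remark 1''). So your proposal is being measured against Gallay's argument, not against anything in this paper. Your overall framework (decomposition into angular Fourier modes $e^{in\theta}$, the observation that only $n=\pm2,\pm3$ are active, skew-symmetry of $\Lambda$ and self-adjointness of $1-\mathcal{L}$ in the Gaussian-weighted space, invertibility of $\Lambda$ restricted to modes $|n|\ge 2$, and a weighted estimate to propagate the decay) is indeed the right setting and matches the structure of Gallay's analysis.

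The genuine gap is exactly at the point you flag as the crux and then do not resolve: the uniform-in-$\nu$ bound. Every concrete mechanism you actually describe yields only $\|w^\nu\|\lesssim \nu^{-1}\|z\|$: the accretivity/Lax--Milgram estimate (as you admit), and also your final weighted energy estimate with weight $e^{\gamma|\xi|^2/4}$, since in that estimate the transport part of $\Lambda$ integrates to zero against a radial weight and the nonlocal part $v\cdot\nabla G$ contributes no coercivity, so the only good term is again $\nu(1-\mathcal{L})$. Your proposed fix --- ``patch'' the coercive viscous estimate with the $\nu$-independent inverse of $\Lambda_n$ --- is not an argument: a Neumann-series perturbation off $\Lambda_n^{-1}$ fails because $\nu\Lambda_n^{-1}(1-\mathcal{L})$ is $\nu$ times an unbounded operator and is not small uniformly, and you give no alternative. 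What is actually needed, and what ``invertibility of $\Lambda_n$'' is too weak to deliver, is a \emph{quantitative sign condition}: on each mode $|n|\ge 2$ one writes $\Lambda_n = in M_n$ with $M_n$ self-adjoint in the weighted space, and the essential input from the Gallay--Wayne spectral analysis is that $M_n$ is positive definite there, i.e.\ $\langle M_n u,u\rangle\ge \epsilon_n\|u\|^2$. Then for the full operator one takes the imaginary part of $\langle (\Lambda_n+\nu(1-\mathcal{L}))u,u\rangle$, which equals $n\langle M_n u,u\rangle$ and hence controls $\|u\|^2$ with a constant independent of $\nu$; the viscous term only helps. Without identifying this (or an equivalent) mechanism, the heart of the proposition --- the $\nu$-independence of $C_\gamma$ --- is unproven, and the subsequent pointwise Gaussian bound inherits the same defect. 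A secondary, smaller issue: you treat the positivity/invertibility of $M_n$ for $|n|\ge2$ as a black box to be ``imported,'' which is legitimate as a citation but means your write-up ultimately reduces to the very lemma of \cite{Gallay11} that the paper cites.
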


\subsection{Vortex-wave reaction term}

In this section, we show that the leading term in the reaction term in \eqref{def-R1} satisfies the assumption of Proposition \ref{prop-Gallay11}. Precisely, we introduce 
\beq \label{Aterm}
R_1(\xi,t)=\frac{1}{\sqrt{\nu t}}(\wtd v^E(\wtd z(t)+\xi\sqrt{\nu t},t)-\wtd v^E(\wtd z(t),t))\cdot \nabla G. 
\eeq
We have the following lemma.

\begin{lemma}\label{A-A0} For any $T>0$, there is a constant $C_T$ so that 
\[
|R_1(\xi,t)-A_0(\xi,t)|\le C_T (\nu t)|\xi|^4e^{-|\xi|^2/4} ,
\]
where 
\beq \label{A0}
\begin{aligned}A_0(\xi,t)&=\frac{1}{16\pi^2}|\xi|^2 e^{-|\xi|^2/4}\int_{\Rt} \frac{\sin(2\psi)}{|\wtd z(t)-y|^2}\wtd \w^E(y,t)dy 
\\&\quad - \frac{1}{16\pi^2} \sqrt{\nu t}|\xi|^3e^{-|\xi|^2/4}\int_{\Rt}\dfrac{\sin(3\psi)}{|\wtd z(t)-y|^3}\wtd \w^E(y,t)dy.
\end{aligned}\eeq
Here, $\psi$ 
denotes the angle between $\xi$ and $\wtd z(t)-y$. 
\end{lemma}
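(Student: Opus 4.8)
The plan is to reduce everything to a Taylor expansion of the smooth velocity field $\wtd v^E$ about the point vortex $\wtd z(t)$. The starting point is that, by Corollary \ref{cor-tWE}, $\wtd\w^E(t)$ is compactly supported and vanishes in a fixed neighborhood of $\wtd z(t)$; arguing as in the proof of Proposition \ref{prop-tWE}, there is a separation distance $\wtd d_T>0$, independent of $\nu$, with $|\wtd z(t)-y|\ge \wtd d_T$ for all $y\in\text{supp}\,\wtd\w^E(t)$ and all $t\in[0,T]$. Writing $\wtd v^E(x)=\frac{1}{2\pi}\int_{\Rt}\frac{(x-y)^\perp}{|x-y|^2}\wtd\w^E(y)\,dy$ and differentiating under the integral, it follows that $\wtd v^E(t)$ is smooth on the ball $B(\wtd z(t),\wtd d_T/2)$ with $\|\partial^\alpha\wtd v^E(t)\|_{L^\infty(B)}\le C_{T,\alpha}$ for every $\alpha$, uniformly in $\nu$. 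I would then split the estimate according to whether $\sqrt{\nu t}\,|\xi|\le\wtd d_T/2$ or not.

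In the inner region $\sqrt{\nu t}\,|\xi|\le\wtd d_T/2$ the shifted point $\wtd z+\xi\sqrt{\nu t}$ stays in $B(\wtd z,\wtd d_T/2)$, so Taylor's theorem gives
\begin{equation*}
\frac{\wtd v^E(\wtd z+\xi\sqrt{\nu t})-\wtd v^E(\wtd z)}{\sqrt{\nu t}}=D\wtd v^E(\wtd z)\,\xi+\frac{\sqrt{\nu t}}{2}D^2\wtd v^E(\wtd z)[\xi,\xi]+\frac{\nu t}{6}D^3\wtd v^E(\wtd z)[\xi,\xi,\xi]+E,
\end{equation*}
with $|E|\le C_T(\nu t)^{3/2}|\xi|^4$. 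Pairing with $\nabla G=-\frac{1}{8\pi}\xi\,e^{-|\xi|^2/4}$, the cubic term contributes $O(\nu t\,|\xi|^4 e^{-|\xi|^2/4})$, while the pairing of $E$ contributes $O((\nu t)^{3/2}|\xi|^5 e^{-|\xi|^2/4})$, which in this region is also $O(\nu t\,|\xi|^4 e^{-|\xi|^2/4})$ because $\sqrt{\nu t}\,|\xi|\le\wtd d_T/2$. It then remains to identify the linear and quadratic contributions, paired with $\nabla G$, with the two terms of $A_0$.

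For that identification I would compute the derivatives of the Biot--Savart kernel directly: in polar form $\wtd z-y=\rho e^{i\phi}$, $\xi=|\xi|e^{i\theta}$, one checks that $\xi^{\top}DK(\wtd z-y)\,\xi=\frac{|\xi|^2}{2\pi\rho^2}\sin 2(\theta-\phi)$, and a similar computation for $D^2K$ produces the $\sin 3(\theta-\phi)/\rho^3$ mode; integrating against $\wtd\w^E(y)\,dy$ and writing $\psi$ for the angle between $\xi$ and $\wtd z-y$ then reproduces exactly the $\sin(2\psi)$ and $\sin(3\psi)$ integrals of $A_0$ (the signs being fixed by the orientation convention for $\psi$). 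The cleanest way to see that \emph{only} the angular frequencies $2$ and $3$ occur --- which is what makes $A_0$ admissible for Proposition \ref{prop-Gallay11} --- is to pass to the complex velocity $f=\wtd v^E_1-i\wtd v^E_2$, which is holomorphic off $\text{supp}\,\wtd\w^E$. With $\zeta=\xi_1+i\xi_2$ one has the exact identity $\wtd v^E\cdot\xi=\mathrm{Re}[f\,\zeta]$, so matching Taylor orders gives $\frac{1}{\sqrt{\nu t}}(\wtd v^E(\wtd z+\xi\sqrt{\nu t})-\wtd v^E(\wtd z))\cdot\xi=\mathrm{Re}\big[f'(\wtd z)\zeta^2+\tfrac{\sqrt{\nu t}}{2}f''(\wtd z)\zeta^3+\tfrac{\nu t}{6}f'''(\wtd z)\zeta^4+\cdots\big]$; because $f$ is holomorphic each coefficient multiplies a pure power $\zeta^{k+1}$, carrying frequency $k+1$, so no frequency-$1$ term can contaminate the cubic order.

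The main obstacle is the outer region $\sqrt{\nu t}\,|\xi|>\wtd d_T/2$, where the expansion is simply unavailable because the shift $\xi\sqrt{\nu t}$ may reach $\text{supp}\,\wtd\w^E$ and $\wtd v^E$ is no longer analytic along the segment. Here I would abandon cancellation and bound the two pieces crudely: $\|\wtd v^E\|_{L^\infty}\le C_T$ gives $|R_1|\le C_T(\nu t)^{-1/2}|\xi|\,e^{-|\xi|^2/4}$, and $|\wtd z-y|\ge\wtd d_T$ on the support gives $|A_0|\le C_T(|\xi|^2+\sqrt{\nu t}\,|\xi|^3)e^{-|\xi|^2/4}$. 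Dividing either bound by the target $\nu t\,|\xi|^4 e^{-|\xi|^2/4}$ leaves only negative powers of the single quantity $\sqrt{\nu t}\,|\xi|$, which exceeds $\wtd d_T/2$ throughout this region; hence both $|R_1|$ and $|A_0|$ are themselves $\le C_T\nu t\,|\xi|^4 e^{-|\xi|^2/4}$, and the triangle inequality closes the estimate. The conceptual point --- and the reason the stated error is exactly $\nu t$ and not smaller --- is precisely this dichotomy: where the velocity is analytic the cubic Taylor term sets the size, and where it is not the Gaussian-weighted target is already large enough to absorb the trivial bounds.
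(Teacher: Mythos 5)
Your proof is correct, but it takes a genuinely different route from the paper's. The paper works directly on the Biot--Savart integral for $R_1$: it splits the $y$-integral according to the $y$-dependent condition $|\xi|\sqrt{\nu t}\le\frac12|\wtd z(t)-y|$, expands the kernel difference $|\wtd z+\xi\sqrt{\nu t}-y|^{-2}-|\wtd z-y|^{-2}$ via the explicit geometric series of Lemma \ref{exp} (which produces the $\sin((n+1)\psi)/\sin\psi$ modes), reads off the $n=1,2$ terms as $A_0$, bounds the tail $n\ge 3$ by $(\nu t)|\xi|^4/|\wtd z-y|^4$, and then absorbs both the complementary pieces of the truncated integrals and the outer-region contribution $A_2$ using exactly the mechanism you identify: those terms vanish unless $|\xi|\sqrt{\nu t}\gtrsim c_T$, where the Gaussian-weighted target $\nu t\,|\xi|^4e^{-|\xi|^2/4}$ already dominates the trivial bounds. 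You instead Taylor-expand the \emph{integrated} velocity $\wtd v^E$ to third order about $\wtd z(t)$ in a $y$-independent inner region, and use the holomorphy of $f=\wtd v^E_1-i\wtd v^E_2$ off $\text{supp}\,\wtd\w^E$ to see that the $k$-th Taylor coefficient paired with $\zeta$ carries pure angular frequency $k+1$, so that only the $\sin 2\psi$ and $\sin 3\psi$ modes appear at the retained orders. This buys a cleaner argument (no auxiliary series lemma, no bookkeeping of truncated integrals, and a conceptual explanation of why only modes $2$ and $3$ occur --- which is what Proposition \ref{prop-Gallay11} needs), at the cost of invoking uniform bounds on four derivatives of $\wtd v^E$ near $\wtd z(t)$; these do follow, as you say, from differentiating the kernel on the region $|\wtd z(t)-y|\ge\wtd d_T$ and require only $\wtd\w^E\in L^1$. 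Both proofs rest on the same two pillars: the uniform separation of $\wtd z(t)$ from $\text{supp}\,\wtd\w^E(t)$ (valid for $T<T_*$, which is the regime in which the lemma is actually used), and the absorption of crude bounds by the target in the region $\sqrt{\nu t}\,|\xi|\gtrsim 1$. The only caveat is the sign/orientation convention for $\psi$, which you flag explicitly; the paper is itself inconsistent on this point (compare \eqref{A0} with \eqref{A0re}), and it is immaterial for the application, since Proposition \ref{prop-Gallay11} only sees the mode structure and the size of the coefficients.
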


\begin{proof}
Recalling \eqref{Aterm} and $G=\frac{1}{4\pi} e^{-|\xi|^2/4}$, and using the Biot-Savart law \eqref{BS-law}, we have 
\[\bega 
R_1(\xi,t)&=\frac{-1}{8\pi\sqrt{\nu t}}(\wtd v^E(\wtd z(t)+\xi\sqrt{\nu t},t)-\wtd v^E(\wtd z(t),t))\cdot \xi e^{-|\xi|^2/4}\\
&=\frac{-e^{-|\xi|^2/4}}{16\pi^2\sqrt{\nu t}}\int_{\mathbb{R}^2}\xi\cdot \lw(\dfrac{(\wtd z(t)+\xi\sqrt{\nu t}-y)^\perp}{|\wtd z(t)+\xi\sqrt{\nu t}-y|^2}-\dfrac{(\wtd z(t)-y)^\perp}{|\wtd z(t)-y|^2}\rw)\wtd \w^E(y,t)dy\\
&=\frac{-e^{-|\xi|^2/4}}{16\pi^2\sqrt{\nu t}}\int_{\mathbb{R}^2}\xi\cdot (\wtd z(t)-y)^\perp \lw(\dfrac{1}{|\wtd z(t)+\xi\sqrt{\nu t}-y|^2}-\dfrac{1}{|\wtd z(t)-y|^2}\rw) \wtd \w^E(y,t)dy
\\
&=: A_1(\xi,t)+A_2(\xi,t),
\enda 
\]
where $A_1(\xi,t), A_2(\xi,t)$ denote the integral over $\{|\xi|\sqrt{\nu t}\le \frac{1}{2}|\wtd z(t)-y|\}$ and $\{|\xi|\sqrt{\nu t}\ge \frac{1}{2}|\wtd z(t)-y|\}$, respectively. 
Let us first treat $A_1(\xi,t)$. Applying Lemma \ref{exp} for $|\xi|\sqrt{\nu t} \le \frac12|\wtd z(t)-y|$,  we have 
\[
\dfrac{1}{|\wtd z(t)+\xi\sqrt{\nu t}-y|^2}-\dfrac{1}{|\wtd z(t)-y|^2}=\dfrac{1}{|\wtd z(t)-y|^2}\sum_{n=1}^\infty (-1)^n \dfrac{|\xi|^n \sqrt{\nu t}^n}{|\wtd z(t)-y|^n}\dfrac{\sin((n+1)\psi)}{\sin(\psi)}.
\]
Here $\psi$ is the angle between $\xi$ and $\wtd z(t)-y$.
Thus we get 
$$\bega 
&\quad \xi\cdot(\wtd z(t)-y)^\perp\lw(\dfrac{1}{|\wtd z(t)+\xi\sqrt{\nu t}-y|^2}-\dfrac{1}{|\wtd z(t)-y|^2}\rw)\\
&= \sum_{n=2}^\infty (-1)^{n+1}(\nu t)^{\frac{n-1}{2}}\dfrac{|\xi|^n}{|\wtd z(t)-y|^n}\sin(n\psi)\\
&=-(\nu t)^{1/2}\dfrac{|\xi|^2}{|\wtd z(t)-y|^2}\sin(2\psi)+(\nu t)\dfrac{|\xi|^3}{|\wtd z(t)-y|^3}\sin (3\psi)
+
\frac{1}{\sqrt{\nu t}}\sum_{n\ge 4}(-1)^{n+1}\frac{(|\xi|\sqrt{\nu t})^n}{|\wtd z(t)-y|^n}\sin (n \psi),
\enda 
$$
in which we can estimate 
$$\Big|\frac{1}{\sqrt{\nu t}}\sum_{n\ge 4}(-1)^{n+1}\frac{(|\xi|\sqrt{\nu t})^n}{|\wtd z(t)-y|^n}\sin (n \psi) \Big|\le 2 \frac{ (\nu t)^{3/2}|\xi|^4}{|\wtd z(t)-y|^4} ,$$
since $|\xi|\sqrt{\nu t} \le \frac12|\wtd z(t)-y|$. Hence, we have 
\[\bega
A_1(\xi,t)&=\frac{|\xi|^2 e^{-|\xi|^2/4}}{16 \pi^2}\int_{|\xi|\sqrt{\nu t}\le \frac{1}{2}|\wtd z(t)-y|}\dfrac{1}{|\wtd z(t)-y|^2}\sin(2\psi)\wtd \w^E(y,t)dy\\
&\quad -\frac{\sqrt{\nu t}|\xi|^3e^{-|\xi|^2/4}}{16 \pi^2}\int_{|\xi|\sqrt{\nu t}\le \frac{1}{2}|\wtd z(t)-y|}\dfrac{1}{|\wtd z(t)-y|^3}\sin (3\psi)\wtd\w^E(y,t)dy\\
&\quad+\mathcal{O}(\nu t |\xi|^4e^{-|\xi|^2/4})\int_{|\xi|\sqrt{\nu t}\le \frac{1}{2}|\wtd z(t)-y|}\frac{1}{|\wtd z(t)-y|^4}\sin(4\psi)\wtd \w^E(y,t)dy.
\enda
\]
We note that all the integrals above are bounded by $\|\wtd \w^E(t)\|_{L^1}$, since $\wtd z(t)$ is bounded away from the support of $\wtd\w^E(t)$ by Corollary \ref{cor-tWE}. Therefore, defining $A_0(\xi,t)$ as in \eqref{A0}, we can write 
\[\bega
A_1(\xi,t)&=A_0(\xi,t) - \frac{|\xi|^2 e^{-|\xi|^2/4}}{16 \pi^2}\int_{|\xi|\sqrt{\nu t}\ge \frac{1}{2}|\wtd z(t)-y|}\dfrac{1}{|\wtd z(t)-y|^2}\sin(2\psi)\wtd \w^E(y,t)dy\\
&\quad +\frac{\sqrt{\nu t}|\xi|^3e^{-|\xi|^2/4}}{16 \pi^2}\int_{|\xi|\sqrt{\nu t}\ge \frac{1}{2}|\wtd z(t)-y|}\dfrac{1}{|\wtd z(t)-y|^3}\sin (3\psi)\wtd\w^E(y,t)dy
+\mathcal{O}(\nu t |\xi|^4e^{-|\xi|^2/4}) .\enda
\]

It remains to treat the integral over the domain $\{|\xi|\sqrt{\nu t}>\frac{1}{2}|\wtd z(t)-y|\}$. Since $\wtd z(t)$ is bounded away from the support of $\wtd\w^E(t)$, the above (explicitly written) integrals vanish for $|\xi|\sqrt{\nu t} \le c_T$ for all $t\in [0,T]$, for some constant $c_T$. On the other hand, for $|\xi|\sqrt{\nu t} \ge c_T$, we have 
$$
\begin{aligned} \Big| &\frac{|\xi|^2 e^{-|\xi|^2/4}}{16 \pi^2}\int_{|\xi|\sqrt{\nu t}\ge \frac{1}{2}|\wtd z(t)-y|}\dfrac{1}{|\wtd z(t)-y|^2}\sin(2\psi)\wtd \w^E(y,t)dy\Big| 
\le C_T \nu t |\xi|^4 e^{-|\xi|^2/4} \| \wtd \w^E(t)\|_{L^1}, 
\end{aligned} $$
for some constant $C_T$. Similarly, we also have $A_2(\xi,t) =0$ for $|\xi|\sqrt{\nu t} \le c_T$ for all $t\in [0,T]$, for some constant $c_T$, while for $|\xi|\sqrt{\nu t} \ge c_T$, we have  
\[\bega
|A_2(\xi,t)|&\le |A_1(\xi,t)|+|A(\xi,t)| 
\\&\le C_T |\xi|^2 (1 + \nu t |\xi|^2) e^{-|\xi|^2/4} \| \wtd \w^E(t)\|_{L^1}+C_T (\nu t)^{-1/2} |\xi| e^{-|\xi|^2/4} \|\wtd v^E\|_{L^\infty} \\
&\le C_T (\nu t) |\xi|^4 e^{-|\xi|^2/4} ,
\enda
\]
upon using Corollary \ref{cor-tWE} to bound $\wtd v^E$ and $\wtd \w^E$. 
The lemma follows. \end{proof}

\subsection{Construction of an approximation solution}

We now construct $w_{2,a}$ that solves the following elliptic equation \beq \label{solapp}\Lambda w_{2,a}+\nu(1-\mathcal{L})w_{2,a}= -A_0(\xi,t)
\eeq
with $A_0(\xi,t)$ defined as in \eqref{A0}. We have the following. 

\begin{lemma}\label{lem-w2a}
There exists a solution $w_{2,a}$ to \eqref{solapp} so that, for any $\gamma\in (0,1)$, there holds 
\[
|w_{2,a}(t,\xi)|+|\nabla w_{2,a}(\xi,t)|\le  C_\gamma e^{-\gamma |\xi|^2/4}
\]
uniformly in $\nu>0$. In particular, we have 
\beq\label{bound-w12a}
\|v_{2,a}(t)\|_{L^\infty}+\int_{\Rt}|w_{2,a}(\xi,t)|^2 e^{|\xi|^2/4}d\xi+\int_{\Rt}|\nabla w_{2,a}(\xi,t)|^2e^{|\xi|^2/4}d\xi \lesssim 1. 
\eeq
\end{lemma}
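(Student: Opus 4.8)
The plan is to observe that the source term $A_0(\xi,t)$ in \eqref{A0} is already in the angular-harmonic form required by Proposition \ref{prop-Gallay11}, so that solving \eqref{solapp} reduces to checking that the radial coefficients of $A_0$ obey the Gaussian-decay hypothesis of that proposition, and then to reading off the stated bounds.

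To make the structure explicit, I would write $\xi = re^{i\theta}$ and $\wtd z(t)-y = \rho e^{i\phi}$ with $\rho = |\wtd z(t)-y|$, so that the angle $\psi$ between $\xi$ and $\wtd z(t)-y$ equals $\theta-\phi$. Expanding through the addition formulas $\sin(2\psi)=\sin(2\theta)\cos(2\phi)-\cos(2\theta)\sin(2\phi)$ and $\sin(3\psi)=\sin(3\theta)\cos(3\phi)-\cos(3\theta)\sin(3\phi)$ and carrying out the $y$-integration, the angle $\theta$ factors out and $A_0$ becomes a superposition
\[
A_0(\xi,t)=a_1(r)\cos(2\theta)+a_2(r)\sin(2\theta)+a_3(r)\cos(3\theta)+a_4(r)\sin(3\theta),
\]
where $a_1,a_2$ are $\tfrac{1}{16\pi^2}r^2 e^{-r^2/4}$ times the integrals of $\rho^{-2}\sin(2\phi)$ and $\rho^{-2}\cos(2\phi)$ against $\wtd\w^E(\cdot,t)$, while $a_3,a_4$ are $\tfrac{1}{16\pi^2}\sqrt{\nu t}\,r^3 e^{-r^2/4}$ times the analogous integrals of $\rho^{-3}\sin(3\phi)$ and $\rho^{-3}\cos(3\phi)$.

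The verification of the decay hypothesis is the heart of the argument, and it rests on Corollary \ref{cor-tWE}. Since $\wtd\w^E(t)$ is compactly supported and vanishes in a neighborhood of $\wtd z(t)$, there is $d_T>0$ with $\rho=|\wtd z(t)-y|\ge d_T$ on the support of $\wtd\w^E(t)$; together with $\|\wtd\w^E(t)\|_{L^1}\le C_T$, this bounds each of the integrals above by $C_T$, uniformly in $t\in[0,T]$. Hence $|a_1(r)|,|a_2(r)|\le C_T r^2 e^{-r^2/4}$ and $|a_3(r)|,|a_4(r)|\le C_T\sqrt{\nu t}\,r^3 e^{-r^2/4}\le C_T\sqrt{T}\,r^3 e^{-r^2/4}$, and differentiating $r^k e^{-r^2/4}$ only raises the polynomial degree, so that $\sum_{i=1}^4(|a_i(r)|+|a_i'(r)|)\le C_T(1+r^4)e^{-r^2/4}$. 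This is exactly the hypothesis of Proposition \ref{prop-Gallay11} with $P(r)=1+r^4$, and the bound is uniform in $\nu$ because the only $\nu$-dependence, the factor $\sqrt{\nu t}\le\sqrt{T}$, is harmless. Applying the proposition to $z=-A_0$ produces the solution $w_{2,a}$ of \eqref{solapp} with $|w_{2,a}(\xi,t)|+|\nabla w_{2,a}(\xi,t)|\le C_\gamma e^{-\gamma|\xi|^2/4}$ for every $\gamma\in(0,1)$, with $C_\gamma$ independent of $\nu$ and of $t\in[0,T]$.

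Finally I would deduce \eqref{bound-w12a} from this pointwise bound. Choosing any $\gamma\in(\tfrac12,1)$ gives $\int_{\Rt}(|w_{2,a}|^2+|\nabla w_{2,a}|^2)e^{|\xi|^2/4}\,d\xi\le C_\gamma^2\int_{\Rt}e^{-(2\gamma-1)|\xi|^2/4}\,d\xi<\infty$, uniformly in $\nu$. For the velocity, the Gaussian decay places $w_{2,a}$ in $\lbb$ with a uniform bound, and splitting the Biot--Savart kernel as $K=K\mathbf{1}_{|x|\le1}+K\mathbf{1}_{|x|>1}$ into an $L^{4/3}$ part (near the origin) and an $L^4$ part (at infinity), Hölder's inequality yields $\|v_{2,a}(t)\|_{L^\infty}=\|K\star w_{2,a}\|_{L^\infty}\lesssim\|w_{2,a}(t)\|_{\lbb}\lesssim1$. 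The main obstacle is really the content of the second and third paragraphs, namely confirming that $A_0$ contains only the second and third angular harmonics with Gaussian-decaying coefficients and that all the resulting bounds are uniform in both $\nu$ and $t$; once this is in place the lemma follows immediately from Proposition \ref{prop-Gallay11}.
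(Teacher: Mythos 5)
Your proof is correct and follows essentially the same route as the paper: both reduce the lemma to Proposition \ref{prop-Gallay11} by recognizing that $A_0$ lies in the second and third angular harmonics with Gaussian-decaying radial coefficients, the uniformity in $\nu$ and $t$ coming from Corollary \ref{cor-tWE}. The only (immaterial, by linearity) difference is that the paper applies the resolvent fiber-wise to the integrand $B_0(\cdot,y,t)$ and then averages in $y$, whereas you carry out the $y$-integration first and apply the proposition once to $A_0$ itself.
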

\begin{proof} For each $y \in \RR^2$, we introduce 
\beq \label{A0re}
\begin{aligned}B_0(\xi,y,t)&=\frac{-1}{16\pi^2}|\xi|^2 e^{-|\xi|^2/4} \frac{\sin(2\psi)}{|\wtd z(t)-y|^2}\wtd \w^E(y,t)
+ \frac{1}{16\pi^2} \sqrt{\nu t}|\xi|^3e^{-|\xi|^2/4} \dfrac{\sin(3\psi)}{|\wtd z(t)-y|^3}\wtd \w^E(y,t),
\end{aligned}\eeq
recalling $\psi$ the angle between $\xi$ and $\wtd z(t) -y$. If follows from \eqref{A0} that $A_0(\xi,t) = \int_{\RR^2} B_0(\xi,y,t)\; dy$. It is clear that for each $y$, $B_0(\xi,y,t)$ satisfies the assumption of Proposition \ref{prop-Gallay11} and hence we can define 
$$W_{2,a}(\xi,y,t) := \Big( \Lambda +\nu(1-\mathcal{L}) \Big)^{-1} B_0(\xi,y,t) ,$$
stressing that $y\in \RR^2$ and $t\ge 0$ play a role as independent parameters. The solution $w_{2,a}$ is thus defined by the average of $W_{2,a}(\xi,y,t)$ with respect to $y$. 
The pointwise estimates follow directly from Proposition \ref{prop-Gallay11} and the estimates on $\wtd \w^E$. Taking $\gamma>1/2$ and using the elliptic estimate $\| v_{2,a}\|_{L^\infty} \lesssim \| w_{2,a}\|_{L^1\cap L^\infty}$, we obtain the estimates \eqref{bound-w12a}. 
\end{proof}

\subsection{Estimating the error term}
Construct $w_{2,a}$ as in Lemma \ref{lem-w2a}. Then, $w_{2,\text{app}} = G(\xi)+\nu tw_{2,a}$ and $v^{E,\nu}_{\text{app}}=\wtd v^E$ approximately solves \eqref{ir-eq} in the following sense.  

\begin{proposition}
 For any $\gamma\in(0,1)$, there holds 
\beq\label{error}
\lw|\Phi(w_{2,\text{app}},v^{E,\nu}_{app})(\xi,t)\rw|\le C_\gamma \nu t^{3/2} e^{-\gamma |\xi|^2/4}
\eeq 
for some constant $C_\gamma$.
\end{proposition}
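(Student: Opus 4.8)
The plan is to substitute the explicit approximation $w_{2,\mathrm{app}} = G + \nu t\, w_{2,a}$ and $v^{E,\nu}_{\mathrm{app}} = \wtd v^E$ into the error expansion \eqref{1st-app}, and then bound each of the resulting terms by $C_\gamma \nu t^{3/2} e^{-\gamma|\xi|^2/4}$. The starting point is the identity \eqref{1st-app}, which records that $(w_{2,\mathrm{app}}, \wtd v^E)$ solves \eqref{ir-eq} up to five error contributions: the elliptic residue $t(\Lambda + \nu(1-\mathcal L))w_{2,a}$, the time-derivative term $\nu t^2 \partial_t w_{2,a}$, the nonlinear transport $\nu t^2 v_{2,a}\cdot\nabla w_{2,a}$, the spatial-variation term $\sqrt\nu\, t^{3/2}(\wtd v^E(\wtd z + \xi\sqrt{\nu t},t) - \wtd v^E(\wtd z,t))\cdot\nabla w_{2,a}$, and the reaction residue $t R_1(\xi,t)$. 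The key structural point is that $w_{2,a}$ was constructed in Lemma \ref{lem-w2a} precisely to solve $\Lambda w_{2,a} + \nu(1-\mathcal L)w_{2,a} = -A_0$, so the elliptic residue term exactly cancels against the leading part of $tR_1$, leaving $t(R_1 - A_0)$; this is where Lemma \ref{A-A0} enters.

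The main effort, and the step I expect to be the genuine obstacle, is the cancellation of the reaction term. By Lemma \ref{A-A0} we have $|R_1 - A_0| \le C_T (\nu t)|\xi|^4 e^{-|\xi|^2/4}$, so the surviving reaction residue is
\[
t\,|R_1 - A_0| \le C_T \nu t^2 |\xi|^4 e^{-|\xi|^2/4}.
\]
To match the claimed bound $C_\gamma \nu t^{3/2} e^{-\gamma|\xi|^2/4}$, I would absorb the polynomial factor $|\xi|^4$ and the excess power of $t$ by trading a sliver of Gaussian decay: for any $\gamma \in (0,1)$ there is a constant with $|\xi|^4 e^{-|\xi|^2/4} \le C_\gamma e^{-\gamma|\xi|^2/4}$, and on the relevant time interval $t \le T$ one has $t^2 \le C_T t^{3/2}$. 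This converts $\nu t^2 |\xi|^4 e^{-|\xi|^2/4}$ into the target form. The remaining three terms are handled by the same philosophy: the time-derivative and nonlinear-transport terms carry an explicit factor $\nu t^2$, and the pointwise and $L^\infty$ bounds on $w_{2,a}$, $\nabla w_{2,a}$, and $v_{2,a}$ from \eqref{bound-w12a} together with the Gaussian weight give each of them a bound of the form $\nu t^2 e^{-\gamma|\xi|^2/4}$, again reducible to $\nu t^{3/2} e^{-\gamma|\xi|^2/4}$ for $t \le T$.

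The spatial-variation term $\sqrt\nu\, t^{3/2}(\wtd v^E(\wtd z + \xi\sqrt{\nu t},t) - \wtd v^E(\wtd z,t))\cdot\nabla w_{2,a}$ deserves separate care, since it carries only a factor $\sqrt\nu\, t^{3/2}$ rather than $\nu t^2$. Here the point is that $w_{2,a}$ vanishes in a neighborhood of $\xi = 0$ and is supported away from the origin (inherited from the structure of $A_0$ and the support separation in Corollary \ref{cor-tWE}), while $\nabla w_{2,a}$ enjoys Gaussian decay; moreover the difference $\wtd v^E(\wtd z + \xi\sqrt{\nu t},t) - \wtd v^E(\wtd z,t)$ is $O(|\xi|\sqrt{\nu t})$ by the mean value theorem and the $W^{2,\infty}$ bound on $\wtd v^E$ from Corollary \ref{cor-tWE}. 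This supplies the missing factor $|\xi|\sqrt{\nu t}$, upgrading $\sqrt\nu\, t^{3/2}$ to $\nu t^2 |\xi|$, which is then absorbed into $C_\gamma \nu t^{3/2} e^{-\gamma|\xi|^2/4}$ as before. Collecting the five bounds and taking the maximum of the constants yields \eqref{error}. The delicate bookkeeping is entirely in ensuring that every term genuinely carries at least one full power of $\nu$ and at least $t^{3/2}$; the time-derivative term $\partial_t w_{2,a}$ is the one to watch, since it requires differentiating the construction in Lemma \ref{lem-w2a} in $t$, but the $t$-dependence enters only through the smooth parameters $\wtd z(t)$ and $\wtd\w^E(y,t)$, whose time regularity is controlled by Corollary \ref{cor-tWE}.
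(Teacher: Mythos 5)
Your decomposition and your treatment of $\Phi_1$, $\Phi_2$ and of the reaction residue $t(R_1-A_0)$ coincide with the paper's proof: cancel the elliptic residue against $A_0$ via \eqref{solapp}, invoke Lemma \ref{A-A0} for $t(R_1-A_0)$, use the mean value theorem together with the Gaussian decay of $\nabla w_{2,a}$ for the spatial-variation term, and absorb polynomial factors of $|\xi|$ and the excess power of $t$ by relaxing the Gaussian exponent. (Your side remark that $w_{2,a}$ ``vanishes in a neighborhood of $\xi=0$ and is supported away from the origin'' is false --- the solution of the elliptic equation \eqref{solapp} does not inherit the support of its source --- but you never actually use it, since the mean value theorem argument suffices.)

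The one genuine gap is the term $\nu t^2\partial_t w_{2,a}$. You assert that $\partial_t w_{2,a}$ is uniformly bounded with Gaussian decay because ``the $t$-dependence enters only through the smooth parameters $\wtd z(t)$ and $\wtd\w^E(y,t)$.'' This overlooks the explicit factor $\sqrt{\nu t}$ in the second term of $A_0$ in \eqref{A0}, whose time derivative behaves like $t^{-1/2}$ as $t\to 0$; consequently $\partial_t A_0$, and hence $\partial_t w_{2,a}$, cannot be expected to stay bounded down to $t=0$, and your claimed intermediate bound $\nu t^2 e^{-\gamma|\xi|^2/4}$ for this term is not justified. The paper's fix is to observe that $\sqrt{t}\partial_t$ commutes with $\Lambda$ and $\mathcal L$, so that $\sqrt{t}\partial_t w_{2,a}$ solves the same elliptic equation with source $-\sqrt{t}\partial_t A_0$, which \emph{is} bounded (estimate \eqref{bd-dtA0}); Proposition \ref{prop-Gallay11} applied to this differentiated equation gives $|\sqrt{t}\partial_t w_{2,a}|\le C_\gamma e^{-\gamma|\xi|^2/4}$, and the term then contributes $\nu t^{3/2}e^{-\gamma|\xi|^2/4}$ --- which is precisely why the proposition's bound is $\nu t^{3/2}$ rather than $\nu t^2$. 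Note also that, even granting boundedness of the source, a pointwise Gaussian bound on the $t$-derivative of the resolvent solution does not follow from ``time regularity of the parameters'' alone; one must differentiate the equation and re-apply Proposition \ref{prop-Gallay11}, exactly as in the construction of $w_{2,a}$ itself. With this repair your argument closes and agrees with the paper's.
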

\begin{proof} 
Fix a $\gamma \in (0,1)$. Using \eqref{solapp} into \eqref{1st-app}, we write 
$$\bega
\Phi(w_{2,\text{app}},v^{E,\nu}_{app})(\xi,t)
&=\nu t^2 v_{2,a}\cdot \nabla w_{2,a}
 +\sqrt{\nu}t^{3/2}(\wtd v^{E}(\wtd z(t)+ \xi \sqrt{\nu t},t)- \wtd v^E(\wtd z(t),t))\cdot \nabla w_{2,a} 
 \\&\quad + \nu t^2 \partial_t w_{2,a}+ t ( R_1(\xi,t) - A_0(\xi,t))
 \\&=: \sum_{i=1}^4\Phi_i(\xi,t) .
\enda 
$$
Let us estimate each term on the right. 
Using Proposition \ref{prop-tWE} and Lemma \ref{lem-w2a}, we get 
\[
|\Phi_1(\xi,t)|\le \nu t^2 \|v_{2,a}(t)\|_{L^\infty}|\nabla w_{2,a}(\xi,t)|  \lesssim \nu t^2 e^{-\gamma |\xi|^2/4} .
\]
Similarly, using Corollary \ref{cor-tWE}, we bound 
\[
|\wtd v^E(\xi\sqrt{\nu t}+\wtd z(t),t)-\wtd v^E(\wtd z(t),t)|\lesssim |\xi|\sqrt{\nu t} \|\nabla \wtd v^E \|_{L^\infty}
\]
and hence 
\[\bega
|\Phi_2(\xi,t)|&\le \sqrt{\nu} t^{3/2}|\wtd v^E(\xi\sqrt{\nu t}+\wtd z(t),t)-\wtd v^E(\wtd z(t),t)||\nabla w_{2,a}(\xi,t)|\\
&\lesssim \nu t^2 |\xi| e^{-\gamma'|\xi|^2/4} \\
&\lesssim \nu t^2e^{-\gamma|\xi|^2/4},
\enda
\]
upon taking $\gamma'$ from Lemma \ref{lem-w2a} so that $\gamma' > \gamma$. 

Next, we treat $\Phi_3(\xi,t)=\nu t^2\pt_tw_{2,a}$. Since $\sqrt{t}\pt_t$ commutes with $\Lambda$ and $\mathcal{L}$, the equation \eqref{solapp} gives 
\[
\lw(\nu(1-\mathcal L)+\Lambda\rw)(\sqrt t\pt_tw_{2,a})=-\sqrt t\pt_tA_0(\xi,t) .
\]
To apply Proposition \ref{prop-Gallay11}, it suffices to prove that 
\beq\label{bd-dtA0}
\sqrt{t}|\pt_t A_0(\xi,t)|\lesssim |\xi|^2 (1+ |\xi|)e^{-|\xi|^2/4} .
\eeq
Indeed, we recall from \eqref{A0re} that 
\beq\label{est11}
\begin{cases}
A_0(\xi,t)&=\int_{\Rt}B_0(\xi,y,t)dy\\
B_0(\xi,y,t)&=\frac{-1}{16\pi^2}|\xi|^2 e^{-|\xi|^2/4} \frac{\sin(2\psi)}{|\wtd z(t)-y|^2}\wtd \w^E(y,t)
+ \frac{1}{16\pi^2} \sqrt{\nu t}|\xi|^3e^{-|\xi|^2/4} \dfrac{\sin(3\psi)}{|\wtd z(t)-y|^3}\wtd \w^E(y,t),
\end{cases}
\eeq 
where $\psi$ is the angle between $\xi$ and $\wtd z(t)-y$. By Corollary \ref{cor-tWE}, $\wtd \w^E(t) $ and $\partial_t \wtd \w^E(t)$ are both bounded, compactly supported, and vanishing in a neighborhood of $\wtd z(t)$. In particular, $|\wtd z(t) - y|$ is bounded below away from zero for $y$ in the support of $\wtd \w^E(t) $. The estimate \eqref{bd-dtA0} thus follows, upon recalling that $\partial_t \wtd z(t) = \wtd v^E(\wtd z(t),t)$ and $\wtd v^E$ is bounded (Corollary \ref{cor-tWE}). Arguing similarly as in Lemma \ref{lem-w2a}, we obtain 
$$ |\sqrt t\pt_tw_{2,a}(\xi,t)|\le C_\gamma e^{-\gamma |\xi|^2/4}.$$
Finally, the last term $\Phi_4(\xi,t)=t(R_1(\xi,t)-A_0(\xi,t))$ is already treated in Lemma \ref{A-A0}. 
This concludes the proof.
\end{proof}

\subsection{Equations for the remainder}

Having introduced the approximate solutions $w_{2,\mathrm{app}} $ and $v^{E,\nu}_{\mathrm{app}}$, let us now study the remainder. Precisely, we search for solutions of \eqref{ir-eq} in the following form 
\beq  \label{remainder}
\begin{cases}
w_2&=G(\xi)+(\nu t) w_{2,a} +(\nu t)\wb\\
v^{E,\nu}&=\wtd v^E+\nu^{3/2}\ve,
\end{cases}
\eeq 
in which $\wtd v^E$ and $w_{2,a}$ are constructed in the previous sections. Putting this Ansatz into \eqref{ir-eq}, 
we have 
\beq \label{ns2}
\bega
&(t\pt_t -\mathcal L + 1 )\wb+\frac{1}{\nu}\Lambda \wb+\sqrt{\frac{t}{\nu}} (\wtd v^E-\dot{\wtd z})\cdot\nabla \wb
 +t(\vb\cdot \nabla w_{2,a}+v_{2,a}\cdot\nabla \wb) 
 \\& +\frac{1}{\sqrt t}(\ve\cdot\nabla  G) +\nu \sqrt t(\ve\cdot \nabla w_{2,a})
+t(\vb\cdot \nabla \wb)+\nu \sqrt t(\ve\cdot \nabla \wb) + \frac{1}{\nu t}\Phi(w_{2,app}, v^{E,\nu}_{app}) = 0,
\enda 
\eeq
in which we stress that $\wtd v^E$ and $\ve$ are functions of $(x,t)$, while $G, w_{2,a},$ and $\wb$ are functions of $\xi,t$. Again, velocity and vorticity are defined through the Biot-Savart law in their respective variables. 

Our goal is to derive estimates for the remainder solution $(\wb,\ve)$ in suitable function spaces. Precisely, we shall work with the following weighted $L^2$ norm 
$$ \| \omega \|_{L^2_p}^2: = \int_{\Rt}|\omega(\xi) |^2 p(\xi)d\xi, \qquad p(\xi) = e^{|\xi|^2/4}.$$
The weight function is natural in view of the following lemma. 

\begin{lemma}\label{lem-Lambda} The operator $\mathcal{L}$ is self-adjoint in $L^2_p$, while $\Lambda$ is skew-symmetric in $L^2_p$. In particular, $\mathcal{L} \le 0$, we have 
$$\langle \Lambda \omega, \omega \rangle_{L^2_p}=0$$
for any $\omega(\xi)$ in the domain of $\Lambda$.  
\end{lemma}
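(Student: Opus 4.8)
The plan is to handle the two operators by entirely different mechanisms: $\mathcal{L}$ becomes self-adjoint after conjugating away the weight, while the skew-symmetry of $\Lambda$ is a structural cancellation tied to the fact that $v^G$ and $\nabla G$ are adapted to the Gaussian weight $p(\xi)=e^{|\xi|^2/4}$.

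For $\mathcal{L}$, I would use that $\omega\mapsto u:=p^{1/2}\omega$ is a unitary isomorphism of $L^2_p$ onto the flat space $L^2(\Rt)$, and compute the conjugated operator $\widetilde{\mathcal{L}}:=p^{1/2}\,\mathcal{L}\,p^{-1/2}$. A direct calculation shows the first-order transport term $\tfrac12\xi\cdot\nabla$ is cancelled by the cross terms coming from $p^{\pm 1/2}$, leaving the Schr\"odinger form
\[
\widetilde{\mathcal{L}} = \Delta_\xi - \frac{|\xi|^2}{16} + \frac12 ,
\]
i.e. (up to sign and a constant shift) a two-dimensional harmonic oscillator. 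This operator is essentially self-adjoint on $C_c^\infty(\Rt)$ with a confining potential, hence self-adjoint, so $\mathcal{L}$ is self-adjoint on $L^2_p$. Since $-\Delta+\tfrac{|\xi|^2}{16}$ has spectrum $\{\tfrac12(N+1):N\ge0\}$, the operator $-\widetilde{\mathcal{L}}$ has spectrum $\{\tfrac{N}{2}:N\ge0\}\subset[0,\infty)$, giving $\mathcal{L}\le0$ with bottom eigenvalue $0$ realised by the Gaussian $G$.

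For $\Lambda\omega = v^G\cdot\nabla\omega + v\cdot\nabla G$ with $v=K\star\omega$, I would split into the two pieces. For the transport piece, integrating by parts in $L^2_p$ and using $\operatorname{div}v^G=0$ together with $v^G\cdot\nabla p = \tfrac12(v^G\cdot\xi)p = 0$ (because $v^G$ is azimuthal, $v^G\parallel\xi^\perp$) yields $\langle v^G\cdot\nabla\omega,\phi\rangle_{L^2_p}=-\langle\omega,v^G\cdot\nabla\phi\rangle_{L^2_p}$, so this piece is exactly skew. For the coupling piece I would exploit that $Gp\equiv\tfrac1{4\pi}$, hence $\nabla G\,p=-\tfrac{\xi}{8\pi}$, to reduce the diagonal quantity to
\[
\langle v\cdot\nabla G,\omega\rangle_{L^2_p} = -\frac{1}{8\pi}\int_{\Rt}(v\cdot\xi)\,\omega\,d\xi .
\]
Writing $\psi=\Delta^{-1}\omega$ so that $v=\nabla^\perp\psi$ and $\omega=\Delta\psi$, one has $v\cdot\xi=\xi\cdot\nabla^\perp\psi=\partial_\theta\psi$ with $\partial_\theta=\xi_1\partial_2-\xi_2\partial_1$, whence the integral equals $\int\partial_\theta\psi\,\Delta\psi\,d\xi$. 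This vanishes: the total angular derivative integrates to zero, $\int_{\Rt}\partial_\theta(|\nabla\psi|^2)\,d\xi=0$, and the commutator identity $\partial_\theta(\nabla\psi)=\nabla(\partial_\theta\psi)-\nabla^\perp\psi$ together with $\nabla\psi\cdot\nabla^\perp\psi=0$ turns this into $\int\nabla\psi\cdot\nabla(\partial_\theta\psi)\,d\xi=0$, i.e. $\int\partial_\theta\psi\,\Delta\psi\,d\xi=0$. Adding the two pieces gives $\langle\Lambda\omega,\omega\rangle_{L^2_p}=0$, and full skew-symmetry $\Lambda^*=-\Lambda$ then follows by polarising this real quadratic form.

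The main obstacle is analytic rather than algebraic: justifying the integrations by parts and the convergence of these integrals, since $v^G$ and $\psi$ are only mildly decaying ($v^G=O(1/|\xi|)$, $\psi$ possibly logarithmically growing) while $G$, $\nabla G$ and elements of $L^2_p$ carry Gaussian decay. I would therefore first prove all three identities on a dense core on which boundary terms at infinity manifestly vanish---for instance $C_c^\infty(\Rt)$, or finite combinations of Hermite-type eigenfunctions of $\widetilde{\mathcal{L}}$---and then extend by density together with the closedness/self-adjointness already established for $\mathcal{L}$. The only genuinely nontrivial computation is the rotational-invariance cancellation for the Biot--Savart coupling term; once the weight-adapted identities $v^G\cdot\xi=0$ and $\nabla(Gp)=0$ are isolated, the remaining steps are routine.
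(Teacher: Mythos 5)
Your argument is correct, and it is considerably more detailed than what the paper provides: the paper simply defers to a ``direct calculation'' in Gallay--Wayne \cite[Lemma 4.8]{GallayW}, so your proposal is effectively a self-contained reconstruction of that cited result. Your treatment of $\mathcal{L}$ (conjugating by $p^{1/2}$ to get the shifted harmonic oscillator $\Delta-|\xi|^2/16+1/2$, reading off self-adjointness and $\mathcal{L}\le 0$ from the oscillator spectrum, with ground state pulling back to $G$) is exactly the standard route, and your computations check out. For $\Lambda$, the transport piece is handled the standard way, using $\operatorname{div}v^G=0$ and $v^G\cdot\xi=0$. Where you genuinely diverge from the reference is the Biot--Savart coupling term: after the reduction $\langle v\cdot\nabla G,\omega\rangle_{L^2_p}=-\frac{1}{8\pi}\int (v\cdot\xi)\,\omega\,d\xi$ (correct, since $Gp$ is constant), Gallay--Wayne kill this integral by writing it as a double integral against $K(\xi-\eta)$ and symmetrizing in $(\xi,\eta)$, using $K(z)\cdot z=0$; this needs no stream function, no integration by parts, and no decay of $\psi$, only absolute convergence of the double integral, which the Gaussian weight supplies. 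Your stream-function computation via $v\cdot\xi=\partial_\theta\psi$ and the commutator identity is also correct, but it is analytically heavier for exactly the reason you flag (possible logarithmic growth of $\psi$, boundary terms in $\int\partial_\theta\psi\,\Delta\psi$), forcing the density/core argument at the end. Both routes work; the symmetrization trick would let you delete that last paragraph entirely. Finally, note that polarization of the vanishing quadratic form only yields skew-symmetry of $\Lambda$ as a form on its domain, not $\Lambda^*=-\Lambda$ as an unbounded-operator identity, but that weaker statement is all the paper uses ($\mathcal{E}_2=0$).
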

\begin{proof}
The lemma follows from a direct calculation; see \cite[Lemma 4.8]{GallayW}.
\end{proof}

\begin{lemma}[Elliptic estimates] \label{elip}Let $\vb=K\star_\xi \wb$ be the velocity obtained from $\wb$ by the Biot-Savart law. There holds 
\[
\|\vb\|_{L^\infty}\lesssim \|\wb\|_{L^2_p}+\|\wb\|_{L^2_p}^{1/2}\|\nabla \wb\|_{L^2_p}^{1/2} .
\]
\end{lemma}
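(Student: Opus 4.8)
The plan is to bound $\vb(\xi_0)=(K\star_\xi\wb)(\xi_0)$ pointwise and uniformly in $\xi_0\in\Rt$, exploiting that the Biot--Savart kernel satisfies $|K(\zeta)|=\tfrac{1}{2\pi|\zeta|}$. Starting from
\[
|\vb(\xi_0)|\le\frac{1}{2\pi}\int_{\Rt}\frac{|\wb(\eta)|}{|\xi_0-\eta|}\,d\eta,
\]
I would split the integral into the far region $\{|\xi_0-\eta|\ge1\}$, where the kernel is bounded and only the decay of $\wb$ matters, and the near region $\{|\xi_0-\eta|<1\}$, where the kernel is singular but locally integrable. The Gaussian weight $p(\xi)=e^{|\xi|^2/4}$ will control the far region, while the interpolation gain carried by $\|\nabla\wb\|_{L^2_p}$ will absorb the near-diagonal singularity.

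For the far region I would use Cauchy--Schwarz against the weight: since $|\xi_0-\eta|^{-1}\le1$ there and $\int_{\Rt}e^{-|\eta|^2/4}\,d\eta<\infty$,
\[
\int_{|\xi_0-\eta|\ge1}\frac{|\wb(\eta)|}{|\xi_0-\eta|}\,d\eta
\le\Big(\int_{|\xi_0-\eta|\ge1}\frac{e^{-|\eta|^2/4}}{|\xi_0-\eta|^2}\,d\eta\Big)^{1/2}\|\wb\|_{L^2_p}
\lesssim\|\wb\|_{L^2_p},
\]
uniformly in $\xi_0$, producing the first term on the right-hand side. For the near region I would apply H\"older at the critical exponent $q=4$, with conjugate $q'=4/3$: because $\int_{|\zeta|<1}|\zeta|^{-4/3}\,d\zeta<\infty$, one obtains
\[
\int_{|\xi_0-\eta|<1}\frac{|\wb(\eta)|}{|\xi_0-\eta|}\,d\eta\lesssim\|\wb\|_{L^4(\Rt)}.
\]
The choice $q=4$ is dictated by the target exponents: the two-dimensional Gagliardo--Nirenberg (Ladyzhenskaya) inequality $\|\wb\|_{L^4}\lesssim\|\wb\|_{L^2}^{1/2}\|\nabla\wb\|_{L^2}^{1/2}$ reproduces exactly the product $\|\wb\|_{L^2_p}^{1/2}\|\nabla\wb\|_{L^2_p}^{1/2}$. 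The last ingredient is that $p\ge1$, so the unweighted norms are dominated by the weighted ones, $\|\wb\|_{L^2}\le\|\wb\|_{L^2_p}$ and $\|\nabla\wb\|_{L^2}\le\|\nabla\wb\|_{L^2_p}$; adding the two regions then yields the claim.

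The only genuine obstacle is the near-diagonal singularity: the logarithmic divergence of $\int_{|\zeta|<1}|\zeta|^{-2}\,d\zeta$ shows that a pure weighted-$L^2$ (Cauchy--Schwarz) treatment fails there, which is precisely why an $L^\infty$ bound on the velocity cannot follow from $\|\wb\|_{L^2_p}$ alone and the derivative norm must enter. Selecting $q=4$ rather than any other $q>2$ is what matches the $1/2$--$1/2$ interpolation weights in the statement, and the elementary bound $p\ge1$ is what lets us import the standard Ladyzhenskaya inequality, stated for ordinary Lebesgue norms, into the weighted setting with no extra effort.
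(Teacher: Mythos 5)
Your proof is correct, and it reaches the stated bound by a somewhat different route than the paper. The paper's own argument is two lines: it invokes the unweighted interpolation estimate $\|\vb\|_{L^\infty}\lesssim \|\wb\|_{L^{4/3}}^{1/2}\|\wb\|_{L^4}^{1/2}$ from Lemma \ref{lem-velocity} in the appendix (proved there by splitting the Biot--Savart kernel at the optimized radius $R=\|\wb\|_{L^{4/3}}/\|\wb\|_{L^4}$), then converts to weighted norms via $\|\wb\|_{L^{4/3}}\lesssim\|\wb\|_{L^2_p}$ (H\"older against the Gaussian weight) and $\|\wb\|_{L^4}\lesssim\|\wb\|_{L^2_p}+\|\nabla\wb\|_{L^2_p}$ (Sobolev embedding), and finally expands $\|\wb\|_{L^2_p}^{1/2}(\|\wb\|_{L^2_p}+\|\nabla\wb\|_{L^2_p})^{1/2}$. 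You instead split the kernel at a fixed radius, absorb the far field directly into $\|\wb\|_{L^2_p}$ by Cauchy--Schwarz against $e^{-|\eta|^2/4}$, and handle the near-field singularity by H\"older at $L^4$ followed by the Ladyzhenskaya inequality. The two arguments rest on the same two mechanisms --- the Gaussian weight compensating for the slow decay of the kernel at infinity, and a gradient norm upgrading $L^2$ to $L^4$ near the diagonal --- and your observation that the $1/2$--$1/2$ exponents force the choice $q=4$ is exactly the reason the $\lbb$ norm pervades the paper. What your version buys is self-containedness (it does not route through the appendix lemma, which the paper reuses elsewhere, e.g.\ in \eqref{sup-vvv}); what the paper's version buys is brevity and a single reusable elliptic estimate. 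Both are valid.
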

\begin{proof}
By H\"older inequality and Sobolev embeddings, we have 
\[\bega
\|\vb\|_{L^\infty}&\lesssim \|\wb\|_{L^{4/3}}^{1/2}\|\wb\|_{L^4}^{1/2}\lesssim \|\wb\|_{L^2_p}^{1/2}\lw(\|\wb\|_{L^2_p}+\|\nabla \wb\|_{L^2_p}\rw)^{1/2}\\
&\lesssim \|\wb\|_{L^2_p}^{1/2}\lw(\|\wb\|_{L^2_p}^{1/2}+\|\nabla \wb\|_{L^2_p}^{1/2}\rw)\\
&=\|\wb\|_{L^2_p}+\|\wb\|_{L^2_p}^{1/2}\|\nabla \wb\|_{L^2_p}^{1/2}.\\
\enda
\]
The proof is complete.
\end{proof}

\subsection{Estimates for the remainder}\label{aest}

This section is devoted to prove the following proposition. 
\begin{proposition}\label{Eest}
There are a positive constant $\kappa$ and a positive time $T$ so that 
\beq \label{apriori}
\bega
t\frac{d}{dt}\|\wb(t)\|_{L^2_p}^2  &+\kappa(\|(1+|\xi|)\wb(t)\|_{L^2_p}^2 + \|\nabla \wb(t)\|_{L^2_p}^2)\\
&\lesssim 
 t \|\wb(t)\|_{L^2_p}^5 + \nu t \|\ve(t)\|_{L^\infty}^4+  t^{-1}\|\ve(t)\|^2_{L^\infty}
\enda
\eeq
uniformly in $\nu$ and in $t\in [0,T]$. 
\end{proposition}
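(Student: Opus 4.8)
My plan is to run a weighted $L^2$ energy estimate: test the remainder equation \eqref{ns2} against $2\wb$ in the space $L^2_p$ with $p(\xi)=e^{|\xi|^2/4}$, and read off \eqref{apriori} term by term. The time derivative contributes exactly the left-hand side $t\frac{d}{dt}\|\wb\|_{L^2_p}^2$. The coercive gain comes from the linear operator $-\mathcal{L}+1$: since $\mathcal{L}w=\frac1p\nabla\cdot(p\nabla w)+w$ is self-adjoint in $L^2_p$ (Lemma \ref{lem-Lambda}), integrating by parts gives the identity $\langle(-\mathcal{L}+1)\wb,\wb\rangle_{L^2_p}=\|\nabla\wb\|_{L^2_p}^2$, and the ground-state substitution $u=\wb\,e^{|\xi|^2/8}$ turns this into $\|\nabla\wb\|_{L^2_p}^2=\|\nabla u\|_{L^2}^2+\tfrac12\|\wb\|_{L^2_p}^2+\tfrac1{16}\||\xi|\wb\|_{L^2_p}^2$. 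Writing $2\|\nabla\wb\|_{L^2_p}^2=\|\nabla\wb\|_{L^2_p}^2+\|\nabla\wb\|_{L^2_p}^2$ and lower-bounding the second copy via this identity, I obtain the full dissipation $\kappa(\|(1+|\xi|)\wb\|_{L^2_p}^2+\|\nabla\wb\|_{L^2_p}^2)$ on the left, with a fixed surplus left over to absorb error terms.

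The two terms carrying negative powers of $\nu$ must then be shown harmless. The term $\frac1\nu\langle\Lambda\wb,\wb\rangle_{L^2_p}$ vanishes identically by the skew-symmetry of $\Lambda$ (Lemma \ref{lem-Lambda}); this exact cancellation, inherited from Gallay's structure, is what prevents the $1/\nu$ prefactor from wrecking the estimate, and I expect it to be the single most important ingredient. For the transport term $\sqrt{t/\nu}\,(\wtd v^E(\wtd z+\xi\sqrt{\nu t},t)-\dot{\wtd z})\cdot\nabla\wb$, I use that $\dot{\wtd z}=\wtd v^E(\wtd z(t),t)$, so the bracket is a velocity increment of size $O(|\xi|\sqrt{\nu t})\|\nabla\wtd v^E\|_{L^\infty}$ by Corollary \ref{cor-tWE}. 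Because this field is divergence free, testing against $\wb\,p$ and integrating by parts moves the derivative onto the weight: denoting that field by $b$, one has $\langle b\cdot\nabla\wb,\wb\rangle_{L^2_p}=-\tfrac14\int(b\cdot\xi)\wb^2p\,d\xi$, and the two factors $\sqrt{\nu t}$ cancel the $1/\sqrt\nu$ precisely, leaving $\lesssim t\|\nabla\wtd v^E\|_{L^\infty}\||\xi|\wb\|_{L^2_p}^2$. This is absorbed into the $\kappa\||\xi|\wb\|_{L^2_p}^2$ surplus provided $T$ is small, which is the source of the time restriction $T\lesssim\|\nabla v^E\|_{L^\infty}^{-1}$.

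The remaining source and coupling terms are all handled by one recipe: bound velocities in $L^\infty$ (Lemma \ref{elip} for $\vb$, Lemma \ref{lem-w2a} for $v_{2,a}$, and the control of $\ve$), integrate the divergence-free transport parts by parts exactly as above, and use the Gaussian decay of $G,\nabla G,w_{2,a},\nabla w_{2,a}$—with $\gamma>1/2$, beating the weight $p$—to make the spatial integrals finite. Young's inequality then cleaves each contribution into a piece absorbed by the dissipation and a piece matching the right-hand side of \eqref{apriori}. Concretely, the genuine nonlinearity $t\,\vb\cdot\nabla\wb$, after integration by parts, Lemma \ref{elip}, and the interpolation $\int|\xi|\wb^2p\le\|\wb\|_{L^2_p}\||\xi|\wb\|_{L^2_p}$, yields the super-linear term $t\|\wb\|_{L^2_p}^5$; the coupling $\frac1{\sqrt t}\ve\cdot\nabla G$ yields $t^{-1}\|\ve\|_{L^\infty}^2$; the terms $\nu\sqrt t\,\ve\cdot\nabla w_{2,a}$ and $\nu\sqrt t\,\ve\cdot\nabla\wb$ yield $\nu t\|\ve\|_{L^\infty}^4$; and the forcing $\frac1{\nu t}\Phi$ is controlled by the sharp error bound \eqref{error}, where the factor $\nu t^{3/2}$ supplies more than enough smallness. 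The interaction terms $t(\vb\cdot\nabla w_{2,a}+v_{2,a}\cdot\nabla\wb)$ carry an explicit $t$ and are again absorbed into the dissipation for small $T$.

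I expect the main obstacle to be the bookkeeping in the last step rather than any single hard estimate: every coupling and nonlinear term is borderline against the dissipation, so Young's inequality must be applied with precisely the exponents that leave $\|\nabla\wb\|_{L^2_p}^2$ and $\||\xi|\wb\|_{L^2_p}^2$ strictly below the coercivity constant $\kappa$, and one must check that the finitely many residual coefficients—each proportional to a power of $t$ or of $\nu$—can be made simultaneously smaller than $\kappa$. It is exactly this simultaneous smallness, together with the transport-term condition above, that fixes the final $T$ (and forces $\nu$ small for the $\nu\sqrt t\,\ve$ contributions). Once the skew-symmetry cancellation of Lemma \ref{lem-Lambda} and the weighted Poincaré coercivity are in hand, the rest is routine.
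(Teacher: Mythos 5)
Your proposal is correct and follows essentially the same route as the paper: a weighted $L^2_p$ energy estimate on \eqref{ns2}, with the $1/\nu$ term killed by the skew-symmetry of $\Lambda$, coercivity of $1-\mathcal{L}$ supplying the $\|(1+|\xi|)\wb\|_{L^2_p}^2+\|\nabla\wb\|_{L^2_p}^2$ dissipation, the $\sqrt{t/\nu}$ transport term integrated by parts onto the weight to produce the borderline $t\||\xi|\wb\|_{L^2_p}^2$ that fixes $T$, and the remaining terms closed by elliptic estimates, Gaussian decay with $\gamma>1/2$, and Young's inequality. The only cosmetic differences are your ground-state substitution for the coercive term (the paper integrates by parts directly) and your integration by parts in the $t\,\vb\cdot\nabla\wb$ term (the paper uses H\"older directly); both yield the same bounds.
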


The proposition follows from weighted energy estimates. To proceed, using the equation \eqref{ns2} for $t\pt_t \wb$, we compute   
\beq\label{id-EEE}\bega
t\frac{d}{dt}\| \wb(t) \|_{L^2_p}^2 &=\int_{\Rt}(t\pt_t\wb(\xi,t))\wb(\xi,t) p(\xi)d\xi =\sum_{i=1}^9\mathcal{E}_i(t),
\enda
\eeq
where
\[
\begin{cases}
\mathcal{E}_1(t)&=\int_{\Rt}p(\xi)(\mathcal{L}\wb-\wb)(\xi,t)d\xi,\\
\mathcal{E}_2(t)&=- \frac1\nu\int_{\Rt}\Lambda\wb (\xi,t)\wb(\xi,t)p(\xi)d\xi,\\
\mathcal{E}_3(t)&=-\sqrt{\frac{t}{\nu}}\int_{\Rt} ( (\wtd v^E-\dot{\wtd z}) \cdot \nabla \wb )\wb(\xi,t)p(\xi)d\xi,\\
\mathcal{E}_4(t)&=-t\int_{\Rt}(\vb\cdot \nabla w_{2,a}+v_{2,a}\cdot \nabla \wb)\wb(\xi,t)p(\xi)d\xi,\\
\mathcal{E}_5(t)&=-t\int_{\Rt}\lw(\vb\cdot \nabla\wb\rw)\wb(\xi,t)p(\xi)d\xi,\\
\mathcal{E}_6(t)&=-\nu \sqrt t\int_{\Rt}\lw( \ve\cdot \nabla\wb\rw)\wb(\xi,t)p(\xi)d\xi,\\
\mathcal{E}_7(t)&=-\frac{1}{\nu t}\int_{\Rt}\Phi_{\text{app}}(\xi,t)\wb(\xi,t)p(\xi)d\xi,\\
\mathcal{E}_8(t)&=-\frac{1}{\sqrt{t}}\int_{\Rt}(\ve\cdot \nabla G)\wb(\xi,t)p(\xi)d\xi
,\\
\mathcal{E}_9(t)&=-\nu \sqrt t\int_{\Rt}\lw( \ve\cdot \nabla w_{2,a}\rw)\wb(\xi,t)p(\xi)d\xi .\\
\end{cases}
\]

~\\
Let us estimate each term $\mathcal{E}_i$. Thanks to Lemma \ref{lem-Lambda}, we have $\mathcal{E}_2(t) =0$, while $\mathcal{E}_1(t) \le -\| \wb(t)\|^2_{L^2_p}$. In fact, the following lemma gives a better coercive estimate for $\mathcal{E}_1(t)$.   

\begin{lemma}[Diffusive term]
There holds 
\[
\mathcal{E}_1(t)\le - \frac1{24} \Big(\|\nabla \wb(t) \|^2_{L^2_p} + \|(1 + |\xi|) \wb(t) \|^2_{L^2_p}\Big) .
\]
\end{lemma}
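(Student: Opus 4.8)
The plan is to reduce $\mathcal{E}_1=\langle(\mathcal{L}-1)\wb,\wb\rangle_{L^2_p}$ to a harmonic-oscillator quadratic form by conjugating out the Gaussian weight, and then to manufacture the missing weighted norm $\|(1+|\xi|)\wb\|_{L^2_p}^2$ out of the gradient via a single elementary pointwise inequality. This is the same spectral computation underlying Lemma \ref{lem-Lambda} and \cite{GallayW}.

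First I would compute $\mathcal{E}_1$ explicitly through the substitution $\wb(\xi)=e^{-|\xi|^2/8}g(\xi)$, which is an isometry $\|\wb\|_{L^2_p}=\|g\|_{L^2}$ since $p=e^{|\xi|^2/4}$. A direct calculation conjugates $\mathcal{L}-1$ into a shifted harmonic oscillator, $(\mathcal{L}-1)\wb = e^{-|\xi|^2/8}\big(\Delta g-(\tfrac{1}{16}|\xi|^2+\tfrac12)g\big)$, so that
\[
\mathcal{E}_1 = \int_{\Rt}\Big(\Delta g-\big(\tfrac{1}{16}|\xi|^2+\tfrac12\big)g\Big)g\,d\xi = -\|\nabla g\|_{L^2}^2-\tfrac{1}{16}\||\xi|g\|_{L^2}^2-\tfrac12\|g\|_{L^2}^2 .
\]
Alternatively, integrating by parts directly and using $\nabla p=\tfrac{\xi}{2}p$ to cancel the drift term $\tfrac12\xi\cdot\nabla\wb$ against the boundary term produced by $\Delta$ yields the compact identity $\mathcal{E}_1=-\|\nabla\wb\|_{L^2_p}^2$; the two expressions are consistent because $\|\nabla\wb\|_{L^2_p}^2=\|\nabla g\|_{L^2}^2+\tfrac{1}{16}\||\xi|g\|_{L^2}^2+\tfrac12\|g\|_{L^2}^2$.

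The key remaining step is a weighted Poincaré inequality turning this gradient control into control of the full norm. Writing $\|(1+|\xi|)\wb\|_{L^2_p}^2=\|(1+|\xi|)g\|_{L^2}^2$, I would invoke the pointwise bound
\[
(1+r)^2\le \tfrac98 r^2+9=18\Big(\tfrac{1}{16}r^2+\tfrac12\Big),\qquad r\ge0,
\]
which is nothing but the rearrangement $\tfrac18(r-8)^2\ge0$. Integrating against $g^2$ and discarding $\|\nabla g\|_{L^2}^2\ge0$ gives $\|(1+|\xi|)\wb\|_{L^2_p}^2\le 18(-\mathcal{E}_1)=18\|\nabla\wb\|_{L^2_p}^2$. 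Combining this with $\mathcal{E}_1=-\|\nabla\wb\|_{L^2_p}^2$ yields $\|\nabla\wb\|_{L^2_p}^2+\|(1+|\xi|)\wb\|_{L^2_p}^2\le 19(-\mathcal{E}_1)$, so that $\mathcal{E}_1\le-\tfrac{1}{19}\big(\|\nabla\wb\|_{L^2_p}^2+\|(1+|\xi|)\wb\|_{L^2_p}^2\big)\le-\tfrac{1}{24}(\cdots)$, which is the claim with room to spare.

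I do not expect a genuine obstacle, since the argument is one exact identity plus a scalar inequality. The only delicate point is the constant: the coefficient $\tfrac{1}{16}$ multiplying $\||\xi|g\|^2$ in $-\mathcal{E}_1$ is small, so the naive split $(1+|\xi|)^2\le 2+2|\xi|^2$ is too lossy to reach $\tfrac{1}{24}$. It is precisely the sharp pointwise bound above, optimized at $r=8$, that makes the numerics close, and I would keep track of it (rather than the gradient term) to secure the stated constant.
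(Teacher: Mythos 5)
Your proof is correct, and it reorganizes the computation in a way that is genuinely cleaner than the paper's. The paper stays in the weighted variables: it integrates by parts directly against $p=e^{|\xi|^2/4}$ and, crucially, does \emph{not} compute the cross term $\int_{\Rt}\wb\,(\nabla p\cdot\nabla\wb)\,d\xi$ exactly (which would collapse everything to the identity $\mathcal{E}_1=-\|\nabla\wb\|^2_{L^2_p}$ and apparently lose the $|\xi|^2$-coercivity); instead it sacrifices part of the exact cancellation via Young's inequality with weight $\tfrac34$ so as to retain an explicit $-\tfrac14\|\nabla\wb\|^2_{L^2_p}-\tfrac12\|\wb\|^2_{L^2_p}-\tfrac1{24}\||\xi|\wb\|^2_{L^2_p}$, and then asserts the lemma. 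You instead conjugate by the Gaussian, obtain the exact harmonic-oscillator identity $\mathcal{E}_1=-\|\nabla g\|_{L^2}^2-\tfrac1{16}\||\xi|g\|_{L^2}^2-\tfrac12\|g\|_{L^2}^2=-\|\nabla\wb\|^2_{L^2_p}$, and recover the weighted norm from the potential via the sharp scalar bound $(1+r)^2\le 18\bigl(\tfrac{1}{16}r^2+\tfrac12\bigr)$. This buys two things: a better constant ($1/19$ rather than $1/24$), and transparency about where the constant comes from --- your closing remark that the naive split $(1+r)^2\le 2+2r^2$ is too lossy is exactly right, and in fact the paper's own final step, taken literally with that naive split and its choice of Young exponent, would only certify the constant $1/32$ (one must re-tune the Young parameter and sharpen the pointwise inequality, as you do, to actually reach $1/24$). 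So your argument not only matches the statement but quietly repairs a small gap in the paper's bookkeeping.
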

\begin{proof}
Recalling $\mathcal{L}=1+\frac{1}{2}\xi\cdot \nabla +\triangle $ and integrating by parts, we compute \[\bega
&\quad \int_{\Rt}(\mathcal{L}\wb-\wb)(\xi,t)p(\xi)\wb(\xi,t)d\xi\\
&=\int_{\Rt}\lw(\triangle \wb+\frac{1}{2}\xi\cdot \nabla \wb\rw)\wb(\xi,t)p(\xi)d\xi, \\
&=-\int_{\Rt}|\nabla \wb|^2p(\xi)d\xi-\int_{\Rt}\wb(\nabla p\cdot \nabla \wb)d\xi+\frac{1}{4}\int_{\Rt}\lw(\xi\cdot \nabla (|\wb|^2)\rw)p(\xi,t)d\xi\\
&=-\int_{\Rt}|\nabla \wb|^2p(\xi,t)d\xi-\int_{\Rt}\wb(\nabla p\cdot \nabla \wb)d\xi-\frac{1}{2}\int_{\Rt}|\wb|^2p(\xi,t)d\xi-\frac{1}{4}\int_{\Rt}|\wb|^2(\xi\cdot \nabla p)d\xi.\\
\enda
\]
The second integral is treated by 
\[
-\int_{\Rt}\wb(\nabla p\cdot \nabla \wb)d\xi\le \dfrac{3}{4}\int_{\Rt}|\nabla \wb|^2p(\xi,t)+\frac{1}{3}\int_{\Rt}\dfrac{|\nabla p|^2}{p^2}|\wb|^2 p(\xi)d\xi .
\]
Recalling now the weight function $p(\xi) = e^{|\xi|^2/4}$, we obtain the lemma at once. 
\end{proof}

 \begin{lemma}\label{lem-E3} There holds
\[
\mathcal{E}_3(t)\lesssim t \| \xi \wb (t)\|_{L^2_p}^2. 
\]
\end{lemma}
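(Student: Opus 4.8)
The plan is to exploit the transport structure of $\mathcal{E}_3$ together with the Lipschitz regularity of $\wtd v^E$ supplied by Corollary \ref{cor-tWE}. First I would observe that, since $\wb$ is a function of the scaled variable $\xi = (x-\wtd z(t))/\sqrt{\nu t}$ and since $\partial_t \wtd z = \wtd v^E(\wtd z(t),t)$, the drift field appearing in $\mathcal{E}_3$ is precisely the rescaled increment
\[
b(\xi,t) := \wtd v^E(\wtd z(t)+\xi\sqrt{\nu t},t) - \wtd v^E(\wtd z(t),t) .
\]
Because $\wtd v^E$ is divergence-free in $x$ (it is produced by the Biot-Savart law), its rescaled increment inherits this property in $\xi$: one checks by the chain rule that $\nabla_\xi\cdot b = \sqrt{\nu t}\,(\nabla_x\cdot \wtd v^E) = 0$.

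The decisive step is a single integration by parts. Writing the integrand as $\tfrac12\, b\cdot\nabla_\xi(\wb^2)\,p$ and integrating by parts, the divergence-free property annihilates the term carrying $\nabla_\xi\cdot b$, so that
\[
\int_{\Rt}(b\cdot\nabla_\xi\wb)\,\wb\,p\,d\xi = -\tfrac12\int_{\Rt}\wb^2\,(b\cdot\nabla_\xi p)\,d\xi = -\tfrac14\int_{\Rt}\wb^2\,(b\cdot\xi)\,p\,d\xi ,
\]
where I used $\nabla_\xi p = \tfrac12\,\xi\,p$ for the weight $p(\xi)=e^{|\xi|^2/4}$. This is what makes the clean bound possible: it removes $\nabla\wb$ from the estimate entirely, so no borrowing from the dissipative term $\mathcal{E}_1$ is required, and the $\|\xi\wb\|_{L^2_p}^2$ form stated in the lemma (rather than a mixed $\|\wb\|_{L^2_p}\|\nabla\wb\|_{L^2_p}$) emerges directly.

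It then remains only to count powers of $\nu$ and $t$. From Corollary \ref{cor-tWE}, $\wtd v^E\in W^{2,\infty}$ uniformly in $\nu$, hence $|b(\xi,t)|\lesssim |\xi|\sqrt{\nu t}\,\|\nabla\wtd v^E\|_{L^\infty}\lesssim |\xi|\sqrt{\nu t}$, and so $|b\cdot\xi|\lesssim |\xi|^2\sqrt{\nu t}$. Multiplying by the prefactor $\sqrt{t/\nu}$ from the definition of $\mathcal{E}_3$ gives $\sqrt{t/\nu}\cdot\sqrt{\nu t}=t$, whence
\[
|\mathcal{E}_3(t)|\lesssim \sqrt{\tfrac{t}{\nu}}\int_{\Rt}|\xi|^2\sqrt{\nu t}\,\wb^2\,p\,d\xi = t\int_{\Rt}|\xi|^2\wb^2\,p\,d\xi = t\,\|\xi\wb(t)\|_{L^2_p}^2 ,
\]
which is the asserted estimate. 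I do not expect a genuine obstacle here; the only points demanding care are the bookkeeping of the scaling factors and the verification that $b$ is divergence-free in $\xi$, which is exactly what lets the one integration by parts produce $\|\xi\wb\|_{L^2_p}^2$ with the right power of $t$ and no leftover $\nu$.
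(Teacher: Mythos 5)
Your proof is correct and is essentially identical to the paper's: the paper likewise integrates by parts using that $\wtd v^E-\dot{\wtd z}$ is divergence-free, uses $\nabla_\xi p=\tfrac12\xi p$, and bounds the increment by $|\xi|\sqrt{\nu t}\,\|\nabla\wtd v^E\|_{L^\infty}$ via Corollary \ref{cor-tWE}, so the factors $\sqrt{t/\nu}\cdot\sqrt{\nu t}=t$ combine exactly as you describe.
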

\begin{proof}
Integrating by parts and using the fact that $\wtd v^E-\dot{\wtd z}$ is divergence free, we have 
\[\bega
\mathcal{E}_3(t)&=-\sqrt{\dfrac{t}{\nu}}\int_{\Rt}\lw((\wtd v^E-\dot{\wtd z})\cdot \nabla \bar w_2\rw)\bar w_2(\xi,t)p(\xi)d\xi\\
&=\frac{1}{2}\sqrt{\frac{t}{\nu}}\int_{\Rt}(\wtd v^E-\dot{\wtd z})\cdot \nabla p(\xi)|\bar w_2(\xi,t)|^2d\xi.\\
\enda 
\]
Recalling $\dot{\wtd z} = \wtd v^E(\wtd z(t),t)$ and using Corollary \ref{cor-tWE}, we estimate 
\[
|\wtd v^E(\xi\sqrt{\nu t}+\wtd z(t),t)-\dot{\wtd z}(t)|=|\wtd v^E(\xi\sqrt{\nu t}+\wtd z(t),t)- \wtd v^E(\wtd z(t),t)| \lesssim \sqrt{\nu t}|\xi|.
\]
The lemma follows, upon using $\nabla p = \frac12 \xi p(\xi)$.
\end{proof}

\begin{lemma} There holds
\[
\mathcal E_4(t)\lesssim t\lw(\| \wb (t)\|_{L^2_p}^2 + \| \nabla \wb(t)\|^2_{L^2_p}\rw).
\]
\end{lemma}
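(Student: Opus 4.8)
The plan is to split the integrand defining $\mathcal{E}_4$ into its two natural pieces,
\[
\mathcal{E}_4(t)=\underbrace{-t\int_{\Rt}(\vb\cdot\nabla w_{2,a})\,\wb\, p\,d\xi}_{=:\,I}\;\underbrace{-\,t\int_{\Rt}(v_{2,a}\cdot\nabla\wb)\,\wb\, p\,d\xi}_{=:\,II},
\]
and to estimate each by a weighted Cauchy--Schwarz inequality followed by Young's inequality, using the decay of the approximate profile $w_{2,a}$ from Lemma~\ref{lem-w2a} and the elliptic estimate of Lemma~\ref{elip}.

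The term $II$ is the easier one. Since $v_{2,a}=K\star w_{2,a}$ is uniformly bounded by \eqref{bound-w12a}, I would pull it out in $L^\infty$ and apply Cauchy--Schwarz in $L^2_p$ to the two factors $\nabla\wb$ and $\wb$:
\[
|II|\le t\,\|v_{2,a}\|_{L^\infty}\int_{\Rt}|\nabla\wb|\,|\wb|\,p\,d\xi\lesssim t\,\|\nabla\wb\|_{L^2_p}\|\wb\|_{L^2_p}\lesssim t\big(\|\nabla\wb\|_{L^2_p}^2+\|\wb\|_{L^2_p}^2\big),
\]
the last step by Young's inequality.

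For $I$ the point is that $\nabla w_{2,a}$ decays strictly faster than $p^{-1/2}$: choosing the decay rate $\gamma>1/2$ in Lemma~\ref{lem-w2a} yields precisely $\int_{\Rt}|\nabla w_{2,a}|^2 p\,d\xi\lesssim1$, which is part of \eqref{bound-w12a}. Hence Cauchy--Schwarz gives
\[
|I|\le t\,\|\vb\|_{L^\infty}\Big(\int_{\Rt}|\nabla w_{2,a}|^2 p\,d\xi\Big)^{1/2}\|\wb\|_{L^2_p}\lesssim t\,\|\vb\|_{L^\infty}\|\wb\|_{L^2_p}.
\]
Inserting the elliptic bound $\|\vb\|_{L^\infty}\lesssim\|\wb\|_{L^2_p}+\|\wb\|_{L^2_p}^{1/2}\|\nabla\wb\|_{L^2_p}^{1/2}$ from Lemma~\ref{elip} produces the two terms $t\|\wb\|_{L^2_p}^2$ and $t\|\wb\|_{L^2_p}^{3/2}\|\nabla\wb\|_{L^2_p}^{1/2}$; the latter is controlled by Young's inequality with exponents $4/3$ and $4$, namely $\|\wb\|_{L^2_p}^{3/2}\|\nabla\wb\|_{L^2_p}^{1/2}\le\tfrac34\|\wb\|_{L^2_p}^2+\tfrac14\|\nabla\wb\|_{L^2_p}^2$. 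Combining the bounds for $I$ and $II$ gives the claim.

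The only delicate point is the weighted integrability in $I$: since $p=e^{|\xi|^2/4}$ grows, one must know that $\nabla w_{2,a}$ decays faster than $e^{-|\xi|^2/8}$ for the product against $p$ to remain integrable. This is exactly what taking $\gamma>1/2$ in Lemma~\ref{lem-w2a} provides; everything else is routine Cauchy--Schwarz and Young, so I do not expect any real obstacle here.
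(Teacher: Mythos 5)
Your proof is correct and follows essentially the same route as the paper: the same splitting into the $\vb\cdot\nabla w_{2,a}$ and $v_{2,a}\cdot\nabla\wb$ pieces, the same use of the weighted bound \eqref{bound-w12a} on $\nabla w_{2,a}$ and the boundedness of $v_{2,a}$, and the same elliptic estimate for $\|\vb\|_{L^\infty}$ (which the paper rederives inline as \eqref{sup-vvv} rather than citing Lemma~\ref{elip}) followed by Young's inequality. No gaps.
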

\begin{proof}
We write $\mathcal{E}_4(t)=-t\lw(\mathcal{E}_{41}(t)+\mathcal{E}_{42}(t)\rw)$, where 
\[
\begin{cases}
\mathcal{E}_{41}(t)&=\int_{\Rt}\lw(\vb\cdot\nabla w_{2,a}\rw)\wb(\xi,t)p(\xi)d\xi,\\
\mathcal{E}_{42}(t)&=\int_{\Rt}\lw(v_{2,a}\cdot \nabla \wb\rw)\wb(\xi,t)p(\xi)d\xi.\\
\end{cases}
\]
Using H\"older's inequality, we estimate 
\[\bega
|\mathcal{E}_{41}(t)|
&\le \|\vb(t)\|_{L^\infty} \| \wb(t)\|_{L^2_p} \lw(\int_{\Rt}|\nabla w_{2,a}(\xi,t)|^2p(\xi)d\xi\rw)^{1/2} ,
\enda
\]
in which the integral is bounded by Lemma \ref{lem-w2a}. As for $\|\vb(t)\|_{L^\infty}$, we use the elliptic estimate and Sobolev embedding, giving 
$$\|\vb\|_{L^\infty}^2 \lesssim \| \wb\|_{L^{4/3}}  \| \wb\|_{L^4} \lesssim \|\wb\|_{L^{4/3}} \| \wb \|_{L^2}^{1/2} ( \| \wb \|_{L^2} + \| \nabla \wb \|_{L^2})^{1/2}.$$
Recalling the weight function $p = e^{|\xi|^2/4}$, we have $\| \wb \|_{L^{4/3}} \lesssim \|\wb\|_{L^2_p}$. Thus, we get 
\beq\label{sup-vvv}
\|\vb\|_{L^\infty}^2 \lesssim \| \wb \|_{L^2_p}^{3/2} (\|\wb \|_{L^2_p} + \| \nabla \wb \|_{L^2_p})^{1/2} \lesssim \|\wb \|_{L^2_p}^2 + \| \nabla \wb \|_{L^2_p}^2,
\eeq
and so 
$$|\mathcal{E}_{41}(t)| \lesssim   \| \wb (t)\|_{L^2_p} (\| \wb (t)\|_{L^2_p} + \| \nabla \wb(t)\|_{L^2_p} )\lesssim \| \wb (t)\|_{L^2_p}^2 + \| \nabla \wb(t)\|^2_{L^2_p}.$$
On the other hand, the estimate on $\mathcal{E}_{42}(t)$ is direct, since $v_{2,a}$ is bounded. The lemma follows. 
\end{proof}

\begin{lemma}
There holds 
\[
\mathcal E_5(t)\lesssim t\lw(\|\wb(t)\|_{L^2_p}^2+\|\wb(t)\|_{L^2_p}^5 + \|\nabla \wb(t)\|_{L^2_p}^2\rw).\]
\end{lemma}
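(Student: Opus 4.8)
The plan is to treat $\mathcal{E}_5$ as the genuinely cubic transport term in $\wb$ and to distribute its two factors carrying a derivative or a weight onto the coercive quantities $\|\nabla\wb\|_{L^2_p}$ and $\|\wb\|_{L^2_p}$, at the cost of one factor of $\|\vb\|_{L^\infty}$. Concretely, I would first apply H\"older's inequality directly, without integrating by parts, pulling $\vb$ out in $L^\infty$ and then using Cauchy--Schwarz in $L^2_p$:
$$|\mathcal{E}_5(t)| \le t\|\vb(t)\|_{L^\infty}\int_{\Rt}|\nabla\wb||\wb|\,p\,d\xi \le t\|\vb(t)\|_{L^\infty}\|\nabla\wb(t)\|_{L^2_p}\|\wb(t)\|_{L^2_p}.$$
I prefer this direct route to integrating by parts: the divergence-free identity $\int(\vb\cdot\nabla\wb)\wb\,p\,d\xi = -\tfrac14\int|\wb|^2(\vb\cdot\xi)\,p\,d\xi$ would instead generate the weighted quantity $\||\xi|\wb\|_{L^2_p}$, whereas the bound above already produces $\|\nabla\wb\|_{L^2_p}$, one of the two norms appearing on the right-hand side of the claim.

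The second step inserts the elliptic estimate for $\vb=K\star_\xi\wb$. The crude form $\|\vb\|_{L^\infty}\lesssim \|\wb\|_{L^2_p}+\|\nabla\wb\|_{L^2_p}$ is too lossy here: combined with the above it leaves a contribution of size $t\|\wb\|_{L^2_p}^{3/2}\|\nabla\wb\|_{L^2_p}^{3/2}$, and Young's inequality then forces a sextic power $\|\wb\|_{L^2_p}^6$, beyond the admissible quintic growth. Instead I would use the sharper interpolated bound from \eqref{sup-vvv}, namely $\|\vb\|_{L^\infty}^2\lesssim \|\wb\|_{L^2_p}^{3/2}(\|\wb\|_{L^2_p}+\|\nabla\wb\|_{L^2_p})^{1/2}$, so that $\|\vb\|_{L^\infty}\lesssim \|\wb\|_{L^2_p}^{3/4}(\|\wb\|_{L^2_p}+\|\nabla\wb\|_{L^2_p})^{1/4}$. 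Substituting and using subadditivity of $x\mapsto x^{1/4}$ yields
$$|\mathcal{E}_5(t)| \lesssim t\|\wb\|_{L^2_p}^{2}\|\nabla\wb\|_{L^2_p} + t\|\wb\|_{L^2_p}^{7/4}\|\nabla\wb\|_{L^2_p}^{5/4}.$$

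Finally I would apply Young's inequality to each summand, choosing exponents so that the $\|\nabla\wb\|_{L^2_p}$ factor is raised to the power $2$: the first summand gives $t(\|\nabla\wb\|_{L^2_p}^2+\|\wb\|_{L^2_p}^4)$ and the second gives $t(\|\nabla\wb\|_{L^2_p}^2+\|\wb\|_{L^2_p}^{14/3})$. Since both exponents $4$ and $14/3$ lie between $2$ and $5$, the elementary inequality $x^a\le x^2+x^5$ valid for $2\le a\le 5$ converts them into $\|\wb\|_{L^2_p}^2+\|\wb\|_{L^2_p}^5$, which completes the proof. The only delicate point --- and hence the main obstacle --- is the bookkeeping of the power of $\|\wb\|_{L^2_p}$: the naive $L^\infty$ bound for $\vb$ over-counts and produces a sextic term, so the argument depends on retaining the interpolated elliptic estimate to keep the growth exactly quintic, which is precisely what Proposition \ref{Eest} can absorb once the coercive terms dominate for small $t$.
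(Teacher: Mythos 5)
Your proof is correct and follows essentially the same route as the paper's: H\"older/Cauchy--Schwarz in $L^2_p$ without integration by parts, followed by the interpolated elliptic bound \eqref{sup-vvv} giving $t(\|\wb\|_{L^2_p}+\|\nabla\wb\|_{L^2_p})^{1/4}\|\wb\|_{L^2_p}^{7/4}\|\nabla\wb\|_{L^2_p}$, and then Young's inequality. The paper leaves the final Young step implicit, and your bookkeeping of the resulting exponents ($4$ and $14/3$, both between $2$ and $5$) correctly fills in that detail.
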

\begin{proof}
 By H\"older's inequality and \eqref{sup-vvv}, we get 
\[\bega
|\mathcal{E}_5(t)|
& = t\Big|\int_{\Rt}\lw(\vb\cdot \nabla\wb\rw)\wb(\xi,t)p(\xi)d\xi\Big|
\\&\le t\|\vb(t)\|_{L^\infty}\|\wb(t)\|_{L^2_p}\|\nabla \wb(t)\|_{L^2_p}\\
&\lesssim  t \lw(\|\wb(t)\|_{L^2_p}+\|\nabla \wb(t)\|_{L^2_p}\rw)^{1/4}\|\wb(t)\|_{L^2_p}^{7/4}\|\nabla \wb(t)\|_{L^2_p} .
\enda
\]
The lemma follows upon using Young's inequality. 
\end{proof}
\begin{lemma}
There holds 
\[
\mathcal{E}_6(t)\lesssim  \nu t \|\ve(t)\|_{L^\infty}^4 + \nu t\|\wb(t)\|^4_{L^2_p} + \nu \|\nabla \wb(t)\|^2_{L^2_p}.
\]
\end{lemma}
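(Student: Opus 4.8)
The plan is to treat $\mathcal{E}_6$ as a routine transport term: pull out the $L^\infty$ norm of $\ve$, apply Cauchy--Schwarz in the weighted space $L^2_p$, and then distribute the resulting product by two applications of Young's inequality. No cancellation or integration by parts is involved here, so the divergence-free structure of $\ve$ plays no role and the evaluation point $x=\wtd z(t)+\xi\sqrt{\nu t}$ of $\ve$ enters only through its supremum bound.

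Concretely, I would first estimate, using $|\ve\cdot\nabla\wb|\le \|\ve(t)\|_{L^\infty}|\nabla\wb|$ followed by Cauchy--Schwarz against the weight $p$,
\[
|\mathcal{E}_6(t)| \le \nu\sqrt{t}\,\|\ve(t)\|_{L^\infty}\int_{\Rt}|\nabla\wb||\wb|\,p\,d\xi \le \nu\sqrt{t}\,\|\ve(t)\|_{L^\infty}\,\|\nabla\wb(t)\|_{L^2_p}\,\|\wb(t)\|_{L^2_p}.
\]
Next I would split off the gradient factor by Young's inequality applied to the pairing $\big(\sqrt{\nu}\,\|\nabla\wb\|_{L^2_p}\big)\cdot\big(\sqrt{\nu t}\,\|\ve\|_{L^\infty}\|\wb\|_{L^2_p}\big)$, obtaining
\[
|\mathcal{E}_6(t)| \le \frac{\nu}{2}\,\|\nabla\wb(t)\|_{L^2_p}^2 + \frac{\nu t}{2}\,\|\ve(t)\|_{L^\infty}^2\,\|\wb(t)\|_{L^2_p}^2.
\]
A second Young's inequality on the product $\|\ve\|_{L^\infty}^2\cdot\|\wb\|_{L^2_p}^2$ then separates the two remaining quartic quantities, yielding the three stated terms $\nu t\|\ve\|_{L^\infty}^4$, $\nu t\|\wb\|_{L^2_p}^4$, and $\nu\|\nabla\wb\|_{L^2_p}^2$.

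There is no genuine obstacle in this lemma; the only point requiring care is bookkeeping the powers of $\nu$ and $t$. The essential design choice was made upstream in the Ansatz \eqref{remainder} via the $\nu^{3/2}\ve$ scaling: it is precisely this scaling that forces the diffusive contribution here to carry a bare factor $\nu$ (rather than a larger power), so that $\frac{\nu}{2}\|\nabla\wb\|_{L^2_p}^2$ can later be absorbed into the coercive diffusive bound coming from $\mathcal{E}_1$ when all nine terms are summed. The quartic terms $\nu t\|\ve\|_{L^\infty}^4$ and $\nu t\|\wb\|_{L^2_p}^4$ match the form of the right-hand side of the a priori estimate \eqref{apriori} and will ultimately be controlled by the smallness of $t$ in the closing Gronwall argument.
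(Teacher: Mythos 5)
Your proposal is correct and follows essentially the same route as the paper: H\"older's inequality in the weighted space to obtain $|\mathcal{E}_6(t)|\lesssim \nu t^{1/2}\|\ve(t)\|_{L^\infty}\|\wb(t)\|_{L^2_p}\|\nabla \wb(t)\|_{L^2_p}$, followed by Young's inequality to split into the three stated terms. The paper leaves the Young step implicit; your two-stage application simply makes that bookkeeping explicit.
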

\begin{proof} Again by H\"older inequality, we get 
\[
\begin{aligned}
|\mathcal{E}_6(t)|
& = \nu \sqrt t\Big|\int_{\Rt}\lw( \ve\cdot \nabla\wb\rw)\wb(\xi,t)p(\xi)d\xi\Big|
\\
&\lesssim \nu t^{1/2}\|\ve(t)\|_{L^\infty}\|\wb(t)\|_{L^2_p}\|\nabla \wb(t)\|_{L^2_p},
\end{aligned}\]
which yields the lemma upon using Young's inequality. 
\end{proof}
\begin{lemma}
There holds 
\[
\mathcal{E}_7(t)\lesssim t^{1/2}\|\wb(t)\|_{L^2_p}.
\]
\end{lemma}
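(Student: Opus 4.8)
The plan is to combine the pointwise error estimate proved in the previous Proposition with a single Cauchy--Schwarz inequality in the weighted space $L^2_p$. Recalling that
\[
\mathcal{E}_7(t)=-\frac{1}{\nu t}\int_{\Rt}\Phi_{\text{app}}(\xi,t)\wb(\xi,t)p(\xi)d\xi,
\]
where $\Phi_{\text{app}}=\Phi(w_{2,\text{app}},v^{E,\nu}_{app})$, I would first bound
\[
|\mathcal{E}_7(t)|\le \frac{1}{\nu t}\int_{\Rt}|\Phi_{\text{app}}(\xi,t)|\,|\wb(\xi,t)|\,p(\xi)d\xi\le \frac{1}{\nu t}\|\Phi_{\text{app}}(t)\|_{L^2_p}\|\wb(t)\|_{L^2_p},
\]
which reduces the entire estimate to controlling the $L^2_p$ norm of the approximation error.

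Next I would invoke the error bound $|\Phi_{\text{app}}(\xi,t)|\le C_\gamma \nu t^{3/2}e^{-\gamma|\xi|^2/4}$, valid for any $\gamma\in(0,1)$. Squaring and integrating against the weight $p(\xi)=e^{|\xi|^2/4}$ gives
\[
\|\Phi_{\text{app}}(t)\|_{L^2_p}^2\le C_\gamma^2\,\nu^2 t^3\int_{\Rt}e^{-2\gamma|\xi|^2/4}e^{|\xi|^2/4}d\xi = C_\gamma^2\,\nu^2 t^3\int_{\Rt}e^{-(2\gamma-1)|\xi|^2/4}d\xi.
\]
The Gaussian integral converges precisely when $2\gamma-1>0$; since $\gamma$ is free in $(0,1)$, I would fix any $\gamma\in(1/2,1)$, so that the integral equals a finite constant depending only on $\gamma$, yielding $\|\Phi_{\text{app}}(t)\|_{L^2_p}\le C_\gamma\,\nu t^{3/2}$.

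Substituting this into the first display then gives
\[
|\mathcal{E}_7(t)|\le \frac{1}{\nu t}\cdot C_\gamma\,\nu t^{3/2}\cdot\|\wb(t)\|_{L^2_p}=C_\gamma\, t^{1/2}\|\wb(t)\|_{L^2_p},
\]
which is exactly the claimed bound. There is no genuine obstacle: the substance of the estimate is entirely contained in the earlier Gaussian-weighted pointwise bound on $\Phi_{\text{app}}$, and the only point requiring attention is the choice of the decay rate $\gamma$ strictly larger than $1/2$, so that the exponential decay of the error survives multiplication by the growing weight $p(\xi)=e^{|\xi|^2/4}$. The singular prefactor $(\nu t)^{-1}$ is precisely absorbed by the $\nu t^{3/2}$ gain coming from the approximation error, leaving the harmless factor $t^{1/2}$.
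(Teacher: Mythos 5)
Your proposal is correct and follows essentially the same route as the paper: both apply the pointwise error bound $|\Phi_{\text{app}}(\xi,t)|\le C_\gamma \nu t^{3/2}e^{-\gamma|\xi|^2/4}$ from the preceding proposition together with the Cauchy--Schwarz inequality in $L^2_p$, choosing $\gamma>1/2$ so that the Gaussian decay dominates the weight $p(\xi)=e^{|\xi|^2/4}$. The only cosmetic difference is the order in which you insert the pointwise bound and apply Cauchy--Schwarz.
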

\begin{proof} Using the estimates from \eqref{error} for a fixed $\gamma\in\lw(\frac{1}{2},1\rw)$ and H\"older inequality, we get 
\[\bega
|\mathcal{E}_7(t)|&\le (\nu t)^{-1}\int_{\Rt}|\Phi_{\text{app}}(\xi,t)||\wb(\xi,t)|p(\xi)d\xi\\
&\le (\nu t)^{-1}\int_{\Rt}(\nu t^{3/2})C_\gamma e^{-\gamma |\xi|^2/4}|\wb(\xi,t)|p(\xi)d\xi\\
&\le C_\gamma t^{1/2} \lw(\int_{\Rt}e^{-2\gamma|\xi|^2/4}p(\xi)d\xi\rw)^{1/2}\lw(\int_{\Rt}|\wb(\xi,t)|^2p(\xi)d\xi\rw)^{1/2}\\
&\lesssim t^{1/2}\|\wb(t)\|_{L^2_p}, 
\enda
\]
where we used $\gamma>1/2$. This concludes the proof.
\end{proof}
\begin{lemma}There hold 
\[
\mathcal{E}_8(t)\lesssim t^{-1/2}\|\ve(t)\|_{L^\infty}\|\wb(t)\|_{L^2_p}, \qquad \mathcal{E}_9(t)\lesssim \nu t^{1/2}\|\ve(t)\|_{L^\infty}\|\wb(t)\|_{L^2_p}.
\]
\end{lemma}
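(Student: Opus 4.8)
The plan is to dispatch both bounds with a single two-step recipe, since $\mathcal{E}_8$ and $\mathcal{E}_9$ are trilinear expressions of the same shape, $-c(t)\int_{\Rt}(\ve\cdot\nabla f)\,\wb\,p\,d\xi$, with $(c,f)=(t^{-1/2},G)$ for $\mathcal{E}_8$ and $(c,f)=(\nu\sqrt t,w_{2,a})$ for $\mathcal{E}_9$. First I would factor out the velocity remainder in $L^\infty$. Although $\ve$ is naturally a function of $x=\wtd z(t)+\xi\sqrt{\nu t}$ while $\wb,G,w_{2,a}$ live in the $\xi$ variable, the pointwise bound $|\ve|\le\|\ve(t)\|_{L^\infty}$ is invariant under this change of variables, so the constant $\|\ve(t)\|_{L^\infty}$ pulls straight out of each integral. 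What remains in both cases is an integral of $|\nabla f|\,|\wb|\,p$, which I would estimate by Cauchy--Schwarz in the weighted space $L^2_p$, isolating $\|\wb(t)\|_{L^2_p}$ and a weighted gradient integral of the fixed profile $f$.

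For $\mathcal{E}_8$ this gives $|\mathcal{E}_8(t)|\le t^{-1/2}\|\ve(t)\|_{L^\infty}\big(\int_{\Rt}|\nabla G|^2p\,d\xi\big)^{1/2}\|\wb(t)\|_{L^2_p}$. Here I would simply compute $\nabla G(\xi)=-\tfrac{\xi}{8\pi}e^{-|\xi|^2/4}$, so that $|\nabla G|^2p=\tfrac{1}{64\pi^2}|\xi|^2e^{-|\xi|^2/4}$ is integrable over $\Rt$ with a fixed finite value; the crucial point is that the Gaussian decay $e^{-|\xi|^2/2}$ of $|\nabla G|^2$ beats the $e^{|\xi|^2/4}$ growth of the weight $p$. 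This leaves exactly $|\mathcal{E}_8(t)|\lesssim t^{-1/2}\|\ve(t)\|_{L^\infty}\|\wb(t)\|_{L^2_p}$.

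For $\mathcal{E}_9$ the only change is that the fixed profile $G$ is replaced by the constructed corrector $w_{2,a}$ and the prefactor is $\nu\sqrt t$. The identical Cauchy--Schwarz step yields $|\mathcal{E}_9(t)|\le\nu t^{1/2}\|\ve(t)\|_{L^\infty}\big(\int_{\Rt}|\nabla w_{2,a}|^2p\,d\xi\big)^{1/2}\|\wb(t)\|_{L^2_p}$, and here I would invoke the weighted gradient bound $\int_{\Rt}|\nabla w_{2,a}(\xi,t)|^2e^{|\xi|^2/4}\,d\xi\lesssim1$ recorded in \eqref{bound-w12a} of Lemma \ref{lem-w2a}, which is uniform in $\nu$. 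Substituting gives $|\mathcal{E}_9(t)|\lesssim\nu t^{1/2}\|\ve(t)\|_{L^\infty}\|\wb(t)\|_{L^2_p}$, as claimed.

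There is no genuine obstacle here: these are the mildest of the nine energy contributions, being linear in $\wb$, and the only analytic input is the uniform-in-$\nu$ finiteness of the two weighted integrals $\int|\nabla G|^2p$ and $\int|\nabla w_{2,a}|^2p$, both already in hand (the first by the explicit Gaussian computation above, the second by Lemma \ref{lem-w2a}). I would note in passing that the mild singular prefactor $t^{-1/2}$ in $\mathcal{E}_8$ is precisely what, after Young's inequality, produces the $t^{-1}\|\ve\|_{L^\infty}^2$ term on the right of the a priori estimate \eqref{apriori}, the leftover $\|\wb\|_{L^2_p}^2$ being absorbed into the coercive term $\kappa\|(1+|\xi|)\wb\|_{L^2_p}^2$; the factor $\nu t^{1/2}$ in $\mathcal{E}_9$ is harmless and is absorbed in the same manner.
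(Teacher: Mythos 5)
Your proposal is correct and follows essentially the same route as the paper: factor $\|\ve(t)\|_{L^\infty}$ out of the trilinear integral and apply Cauchy--Schwarz in the weighted space, using that $|\nabla G|^2 p$ is integrable (since $\nabla G = -\tfrac{\xi}{8\pi}e^{-|\xi|^2/4}$) and that $\int |\nabla w_{2,a}|^2 p\,d\xi \lesssim 1$ from Lemma \ref{lem-w2a}. The paper organizes the same computation slightly differently (cancelling the weight against the Gaussian before applying Cauchy--Schwarz, and citing the pointwise bound on $\nabla w_{2,a}$ rather than the integrated one), but the argument is identical in substance.
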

\begin{proof}
We recall that 
\[
\mathcal{E}_8(t)=-\frac{1}{\sqrt{t}}\int_{\Rt}(\ve(\xi,t)\cdot \nabla G(\xi))\wb(\xi,t)p(\xi)d\xi
\]
where $G(\xi)=\frac{1}{4\pi} e^{-|\xi|^2/4}$ and $p(\xi) = e^{|\xi|^2/4}$. We have 
\[\bega
|\mathcal{E}_8(t)|&\lesssim  t^{-1/2}\|\ve(t)\|_{L^\infty}\int_{\Rt}|\xi| |\wb(\xi,t)|d\xi \lesssim  t^{-1/2}\|\ve(t)\|_{L^\infty} \| \wb(t)\|_{L^2_p}.
\enda
\]
The proof for $\mathcal{E}_9(t)$ is identical, upon recalling the pointwise bound on $\nabla w_{2,a}$ from Lemma \ref{lem-w2a}. 
\end{proof}
\begin{proof}[Proof of Proposition \ref{Eest}]\label{proof1}
We are now ready to prove Proposition \ref{Eest}. 
Collecting and combining all the estimates from the previous lemmas, we get
\beq\label{collect}
\bega
&t\frac{d}{dt}\|\wb(t)\|_{L^2_p}^2 +\kappa(\|(1+|\xi|)\wb(t)\|_{L^2_p}^2 + \|\nabla \wb(t)\|_{L^2_p}^2)\\
&\lesssim 
 t\lw(\|(1+|\xi|)\wb(t)\|_{L^2_p}^2+\|\wb(t)\|_{L^2_p}^5 + \|\nabla \wb(t)\|_{L^2_p}^2\rw) +  t^{1/2}\|\wb(t)\|_{L^2_p} 
 \\&\quad + \nu t \|\ve(t)\|_{L^\infty}^4 + \nu t\|\wb(t)\|^4_{L^2_p} + \nu \|\nabla \wb(t)\|^2_{L^2_p}+  t^{-1/2}\|\ve(t)\|_{L^\infty}\|\wb(t)\|_{L^2_p} ,
\enda
\eeq
for $\kappa = 1/24$. 
Taking $t$ and $\nu$ sufficiently small and using Young's inequality, we obtain 
\beq\label{collect1}
\bega
t\frac{d}{dt}\|\wb(t)\|_{L^2_p}^2  &+\frac \kappa2(\|(1+|\xi|)\wb(t)\|_{L^2_p}^2 + \|\nabla \wb(t)\|_{L^2_p}^2)\\
&\lesssim 
 t \|\wb(t)\|_{L^2_p}^5 + \nu t \|\ve(t)\|_{L^\infty}^4+  t^{-1}\|\ve(t)\|^2_{L^\infty}
 .
\enda
\eeq
This completes the proof of the proposition. 
\end{proof}

\begin{remark}\label{rem-time1} The constraint on the smallness of times $T$ is precisely due to the term $\mathcal{E}_3(t)$ treated in Lemma \ref{lem-E3}. The remaining terms are treated using the standard Young's inequality. Hence, we in fact obtain 
\beq\bega
t\frac{d}{dt}\|\wb(t)\|_{L^2_p}^2  &+\kappa \Big( \|\wb(t)\|_{L^2_p}^2 + \|\nabla \wb(t)\|_{L^2_p}^2 + (1-5t\|\nabla v^E(t)\|_{L^\infty})\|\xi \wb(t)\|_{L^2_p}^2\Big)\\
&\lesssim 
 t ( \|\wb (t)\|_{L^2_p}^2 + \|\wb(t)\|_{L^2_p}^5) + \nu t \|\ve(t)\|_{L^\infty}^4+  t^{-1}\|\ve(t)\|^2_{L^\infty}
\enda
\eeq
for all positive times, as long as the estimates from Proposition \ref{prop-tWE} and Corollary  \ref{cor-tWE} on the approximate vortex-wave solutions are valid. This yields a lower bound on the smallness of $T$ so that $\sup_{0\le t\le T}5 t\|\nabla v^E (t)\|_{L^\infty} \le1$. 
\end{remark}

\begin{remark}
One may try to improve the time interval by introducing a new weight function, as done similarly in \cite{Gallay11}, $p_{new}(\xi) = p(\xi) (1+  \nu t q(\xi,t))$, where $q(\xi,t)$ solves
$$ v^G(\xi) \cdot \nabla_\xi q =  \frac{1}{\sqrt{\nu t}} \Big( v^E(z(t) + \xi \sqrt{\nu t},t) - v^E(z(t),t) \Big) \cdot \xi ,$$
whose solution is however unclear for large $\xi \sqrt{\nu t}$.
\end{remark}

\section{Inviscid limit for the regular part}\label{sec-regular}
In the previous section, we have proved the apriori estimate for $\w^{B,\nu}$ and $v^{E,\nu}$ in the weighted energy space with the re-scaled variable $\xi=\frac{x-\wtd z(t)}{\sqrt{\nu t}}$. In this section, we derive estimates on the regular vorticity component $\w^{E,\nu}$, which solves 
\beq \label{temp}
\pt_t\w^{E,\nu}+u^\nu\cdot \nabla \w^{E,\nu}=\nu\triangle \w^{E,\nu}
\eeq
 with the initial data $\w_0^E$. We write:
\beq \label{forms}
\begin{cases}
\w^{E,\nu}(t,x)&=\wtd \w^E(t,x)+\nu^{3/2}\we(t,x) , \\
v^{E,\nu}(t,x)&= \wtd v^E(t,x)+\nu^{3/2}\ve(t,x), \\
v^{B,\nu}(t,x)&=\frac{1}{\sqrt{\nu t}}v^G\lw(\frac{x-\wtd z(t)}{\sqrt{\nu t}}\rw) + \sqrt{\nu t} (v_{2,a}+\vb)\lw(\frac{x-\wtd z(t)}{\sqrt{\nu t}},t\rw),\\
u^{\nu}(t,x)&=v^{E,\nu}(t,x)+v^{B,\nu}(t,x),\\
\end{cases}
\eeq
where $(\wtd z(t),\wtd \w^E)$ is the solution to the viscous vortex-wave system introduced in Section \ref{sec-appVW}, while $v^G$ and $v_{2,a}$ are constructed in Section \ref{sec3}. Here, we note that the form of the common velocity $u^{\nu}(t,x)$ is compatible with the form in \eqref{remainder} and \eqref{change} in the scaled variable $\xi$. The velocity $\vb$ is kept the same as in the previous section, with $\xi$ is replaced by $\frac{x-\wtd z(t)}{\sqrt{\nu t}}$ and $\vb=K\star_\xi \wb$. It is natural to work in the original variables $(x,t)$ instead of $(\xi,t)$, since $\w^{E,\nu}(t)$ solves \eqref{temp} with regular initial data $\w_0^E$. Hence one does not expect $\w^{E,\nu}$ to have the localized behavior near the point vortex. 
Roughly speaking, we want to get an apriori bound on $\|\ve(t)\|_{L^\infty}$ (in terms of $\wb(t)$) on a time interval independent of $\nu$.
Precisely, we shall prove the following proposition.  

\begin{proposition} \label{goal-est} Let $\we$ solve the equations  \eqref{temp} and \eqref{forms}. 
There exists a positive time $T$, independent of $\nu>0$, such that 
\[
\|\we(t)\|_{\lbb}\lesssim \int_0^t s^{3/2}(\|\wb(t)\|_{L^2_p}+\|\nabla \wb(t)\|_{L^2_p})ds+\nu^{1/2} t\]
for $ t \in [0,T ]$. 
\end{proposition}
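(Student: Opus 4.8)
The plan is to turn the statement into a forced linear advection--diffusion estimate for $\we$. Substituting the Ansatz \eqref{forms} into \eqref{temp} and using $\w^{E,\nu}=\wtd\w^E+\nu^{3/2}\we$ together with $w_{1,a}(0)=0$ (so that $\we(0)=0$), one finds that $\we$ solves
\[
\pt_t\we+u^\nu\cdot\nabla\we-\nu\triangle\we=-\nu^{-3/2}\mathcal{R}^E,\qquad
\mathcal{R}^E:=\pt_t\wtd\w^E+u^\nu\cdot\nabla\wtd\w^E-\nu\triangle\wtd\w^E ,
\]
where $u^\nu=K\star\w^\nu$ is divergence free. Since the drift is divergence free and $-\nu\triangle$ is dissipative, testing against $|\we|^{p-2}\we$ annihilates the transport term and gives $\frac{d}{dt}\|\we\|_{L^p}\le\nu^{-3/2}\|\mathcal{R}^E\|_{L^p}$ for $p=4$ and $p=4/3$; integrating from $\we(0)=0$ reduces the proposition to the residual bound $\|\mathcal{R}^E(s)\|_{\lbb}\lesssim\nu^{3/2}s^{3/2}(\|\wb\|_{L^2_p}+\|\nabla\wb\|_{L^2_p})+\nu^{2}$, plus a self-coupling contribution of size $\nu^{3/2}\|\we\|_{\lbb}$ that will be absorbed by Gronwall on a $\nu$-independent interval.

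I would then compute $\mathcal{R}^E$ by inserting $\wtd\w^E=\w^E+\nu w_{1,a}$, $\wtd v^E=v^E+\nu v_{1,a}$ and $u^\nu=\wtd v^E+\nu^{3/2}\ve+\tfrac{1}{\sqrt{\nu t}}v^G(\xi)+\sqrt{\nu t}(v_{2,a}+\vb)(\xi)$ with $\xi=(x-\wtd z)/\sqrt{\nu t}$, and using the two equations defining the approximate system: the vortex--wave equation $\pt_t\w^E=-(v^E+H)\cdot\nabla\w^E$ and equation \eqref{w1a} for $w_{1,a}$. The term $\triangle\w^E$ in \eqref{w1a} cancels $\nu\triangle\w^E$, the feedback $v_{1,a}\cdot\nabla\w^E$ cancels the $\nu v_{1,a}$ part of $\wtd v^E\cdot\nabla\w^E$, and on $\mathrm{supp}\,\wtd\w^E$ the Oseen field $\tfrac{1}{\sqrt{\nu t}}v^G(\xi)$ agrees with the point--vortex kernel $K(\cdot-\wtd z)$ up to exponentially small errors; matching this against $H=K(\cdot-z)$ is exactly where the joint design of $w_{1,a}$ and of the drift law $\pt_t\wtd z=\wtd v^E(\wtd z)$, keeping $\wtd z-z=O(\nu t)$, must be invoked. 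What remains in $\mathcal{R}^E$ is, schematically, the genuine vortex--wave coupling $\sqrt{\nu t}\,(v_{2,a}+\vb)(\xi)\cdot\nabla\wtd\w^E$, the self-coupling $\nu^{3/2}\ve\cdot\nabla\wtd\w^E$, and strictly higher-order pieces such as $\nu^2\triangle w_{1,a}$ and exponential tails.

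The core of the argument is the decay of the Biot--Savart velocities on the support of $\wtd\w^E$. By Corollary \ref{cor-tWE}, $\wtd\w^E$ vanishes in a fixed ball around $\wtd z$, so on $\mathrm{supp}\,\nabla\wtd\w^E$ one has $|\xi|\ge d_T/\sqrt{\nu t}$, which is large as $\nu\to0$. For $\vb=K\star_\xi\wb$ I would use that mass conservation forces $\int_{\Rt}\wb\,d\xi=0$ (from $\int_{\Rt}w_2\,d\xi=\int_{\Rt}G\,d\xi=1$ and $\int_{\Rt}w_{2,a}\,d\xi=0$, the latter by integrating \eqref{solapp} and using that $A_0$ has zero average), so the monopole of $\vb$ vanishes and $|\vb(\xi)|\lesssim|\xi|^{-2}(\|\wb\|_{L^2_p}+\|\nabla\wb\|_{L^2_p})$ on that region; hence $\|\sqrt{\nu t}\,\vb(\xi)\cdot\nabla\wtd\w^E\|_{\lbb}\lesssim(\nu t)^{3/2}(\|\wb\|_{L^2_p}+\|\nabla\wb\|_{L^2_p})$, and dividing by $\nu^{3/2}$ produces exactly the $s^{3/2}(\|\wb\|_{L^2_p}+\|\nabla\wb\|_{L^2_p})$ integrand. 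For $v_{2,a}$ I would use that, by Proposition \ref{prop-Gallay11}, $w_{2,a}$ carries only angular modes $2$ and $3$ (inherited from the $\sin2\psi,\sin3\psi$ structure of $A_0$), so $v_{2,a}$ decays like $|\xi|^{-3}$ and its coupling is $O((\nu t)^2)$, contributing the $\nu^{1/2}t$ error after scaling and integration. The self-coupling is bounded by $\|\ve\|_{L^\infty}\|\nabla\wtd\w^E\|_{\lbb}\lesssim\|\we\|_{\lbb}$ via $\ve=K\star\we$ and Corollary \ref{cor-tWE}, and is removed by Gronwall on a $\nu$-independent $[0,T]$ with $T<T_*$.

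The main obstacle is precisely this coupling estimate: a crude bound $|\vb|\lesssim\|\wb\|_{L^2_p}$ supplies only $O((\nu t)^{1/2})$ smallness on the vortex core and leaves an uncontrollable factor $\nu^{-1}$; recovering the extra $\sqrt{\nu t}$ needed to absorb the $\nu^{3/2}$ of the Ansatz hinges on the quadratic decay $|\vb|\lesssim|\xi|^{-2}$, i.e.\ on the vanishing total circulation of $\wb$, combined with the separation of $\mathrm{supp}\,\wtd\w^E$ from $\wtd z$. The companion subtlety, which must be checked carefully, is that $(\wtd z,\wtd\w^E)$ genuinely absorbs the leading transport and the $O(\nu)$ motion of the vortex center, so that no residual of size $O(\nu t)$—which would destroy the stated rate—survives; this is what dictates the particular coupling between $w_{1,a}$, the feedback $v_{1,a}\cdot\nabla\w^E$, and the drift law for $\wtd z$.
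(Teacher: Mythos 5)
Your proposal is correct and follows essentially the same route as the paper: derive the forced transport--diffusion equation for $\we$, bound the forcing in $L^4\cap L^{4/3}$ using the separation of $\mathrm{supp}\,\wtd\w^E$ from the vortex center, the bound $|\wtd z-z|\lesssim\nu t$, and the zero total circulation of $\wb$ (which yields the quadratic decay of $\vb$ and hence the crucial factor $\nu t$ on the support of $\nabla\wtd\w^E$), and close by Gronwall on a $\nu$-independent interval. Your additional treatment of the $\sqrt{\nu t}\,v_{2,a}\cdot\nabla\wtd\w^E$ coupling via the higher angular modes of $w_{2,a}$ is a sensible (and slightly more careful) supplement to the forcing decomposition written in the paper.
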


\subsection{Equations for the remainder}\label{heat-trans}
In this subsection, we derive the equations for the remainder $\we$ as well as $\vb$ appearing in \eqref{temp} and \eqref{forms}.
Putting the Ansatz \eqref{forms} into equation \eqref{temp} and using equation \eqref{w1a}, we obtain the following transport-diffusion equation for $\we$:
\[
\pt_t\we+u^\nu\cdot \nabla \we-\nu \triangle \we=f(x,t),
\]
where $f(x,t)$ are given by 
\beq\label{wee}
\begin{aligned}
f(x,t)&= - \frac{1}{\nu \sqrt{t}}\lw(v^G\lw(\frac{x-\wtd z(t)}{\sqrt{\nu t}}\rw)-v^G\lw(\frac{x- z(t)}{\sqrt{\nu t}}\rw)\rw)\cdot \nabla w_{1,a} - \ve\cdot \nabla \wtd\w^E - \frac{\sqrt{t}}{\nu}\vb\cdot \nabla \wtd \w^E\\
&\quad - \sqrt{\nu}(v_{1,a}\cdot\nabla w_{1,a}) + \frac{1}{2\pi \nu^{3/2}}\frac{(x- z(t))^\perp}{|x- z(t)|^2}e^{-\frac{|x- z(t)|^2}{4\nu t}}\cdot \nabla  \w^E+\sqrt{\nu}\triangle w_{1,a}.\\
\end{aligned}
\eeq 
\subsection{Estimating the forcing term $f(x,t)$}
In this subsection, we prove the following proposition
\begin{proposition}\label{forcing}
Let $f(x,t)$ be defined as in \eqref{wee}. There holds 
\[
\|f(t)\|_{\lbb}\lesssim \|\we(t)\|_{\lbb}+t^{3/2}\lw(\|\wb(t)\|_{L^2_p}+\|\nabla \wb(t)\|_{L^2_p}\rw) + \sqrt{\nu}.
\]
\end{proposition}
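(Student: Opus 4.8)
The plan is to estimate each of the six terms appearing in the definition \eqref{wee} of $f(x,t)$ separately, measuring everything in the $\lbb = L^4 \cap L^{4/3}$ norm. The structure of \eqref{wee} shows that the terms are of three types: terms that are genuinely $O(\sqrt{\nu})$ and contribute to the last piece of the bound; the term $\ve \cdot \nabla \wtd \w^E$, which should produce the $\|\we(t)\|_{\lbb}$ contribution since $\ve$ is the velocity generated by $\we$ via the Biot-Savart law; and the terms involving $\vb$ and the Gaussian/$v^G$ differences, which must be shown to produce the $t^{3/2}(\|\wb\|_{L^2_p} + \|\nabla \wb\|_{L^2_p})$ factor. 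I would first dispose of the three easy $O(\sqrt{\nu})$ terms, namely $-\sqrt{\nu}(v_{1,a}\cdot \nabla w_{1,a})$ and $\sqrt{\nu}\triangle w_{1,a}$, using the uniform bounds on $w_{1,a}$ and $v_{1,a}$ from Proposition \ref{prop-tWE} (which control $W^{2,4}$ and $W^{2,\infty}$ norms on a fixed compact support), so these are immediately $\lesssim \sqrt{\nu}$.

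Next I would treat the term $-\ve \cdot \nabla \wtd\w^E$. Since $\wtd \w^E$ is compactly supported with bounded derivatives (Corollary \ref{cor-tWE}), this term is controlled in $L^4 \cap L^{4/3}$ by $\|\ve\|_{L^\infty}$ on the support of $\nabla \wtd \w^E$, and $\|\ve\|_{L^\infty}$ is in turn bounded by the $L^4 \cap L^{4/3}$ norm of $\we$ through the Biot-Savart elliptic estimate, giving the $\|\we(t)\|_{\lbb}$ contribution. The crucial and most delicate term is $-\frac{\sqrt{t}}{\nu}\vb \cdot \nabla \wtd \w^E$, together with the Gaussian interaction term $\frac{1}{2\pi\nu^{3/2}}\frac{(x-z(t))^\perp}{|x-z(t)|^2}e^{-|x-z(t)|^2/(4\nu t)}\cdot \nabla \w^E$ and the $v^G$-difference term. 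Here one must exploit that all these factors are evaluated against $\nabla \wtd \w^E$ (or $\nabla w_{1,a}$), which vanishes in a neighborhood of the point vortex, so $|x - \wtd z(t)| \ge d_T > 0$ on the relevant support. On that region the Gaussian $e^{-|x-z(t)|^2/(4\nu t)}$ and its rescaled cousins are exponentially small in $1/\nu$, which kills the apparently singular prefactors $\nu^{-3/2}$ and $\nu^{-1}$; I expect these to contribute at worst another $O(\sqrt{\nu})$ (in fact much smaller) after using that $\sqrt{\nu t}$-scaled exponentials beat any negative power of $\nu$ on a region bounded away from the center.

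The genuinely load-bearing estimate is therefore the $\vb$ term. Writing $\vb = K \star_\xi \wb$ in the scaled variable and accounting for the Jacobian of the change of variables $\xi = (x - \wtd z(t))/\sqrt{\nu t}$, one sees that on the support of $\nabla \wtd \w^E$, where $|\xi| \gtrsim d_T/\sqrt{\nu t}$ is large, the rescaled velocity $\vb$ decays and the $\nu^{-1}$ prefactor is compensated. The plan is to bound $\|\vb \cdot \nabla \wtd \w^E\|_{\lbb}$ using Lemma \ref{elip}, which gives $\|\vb\|_{L^\infty} \lesssim \|\wb\|_{L^2_p} + \|\wb\|_{L^2_p}^{1/2}\|\nabla \wb\|_{L^2_p}^{1/2}$, and then converting the $\sqrt{t}/\nu$ prefactor and the Jacobian factors from the scaling into the clean power $t^{3/2}$ claimed in the statement. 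The main obstacle I anticipate is exactly this bookkeeping of the scaling factors: one must carefully track how the weighted norm $L^2_p$ in the $\xi$ variable, the factor $\sqrt{t}/\nu$, and the support constraint $|x - \wtd z(t)| \ge d_T$ combine so that the apparent $\nu^{-1}$ singularity is absorbed and the net power of $t$ comes out to $t^{3/2}$, rather than a negative power of $\nu$; establishing that the weight $p(\xi) = e^{|\xi|^2/4}$ provides enough decay against the polynomially-growing $\xi$ prefactors on the far support is where the real work lies.
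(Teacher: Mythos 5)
Your overall decomposition (the $O(\sqrt{\nu})$ terms, the $\ve\cdot\nabla\wtd\w^E$ term handled by the Biot--Savart bound $\|\ve\|_{L^\infty}\lesssim\|\we\|_{\lbb}$, and the $\vb$ term as the load-bearing one) matches the paper, but two of your key mechanisms are not the right ones and would not close. First, the $v^G$-difference term is \emph{not} controlled by exponential smallness. Since $v^G(\xi)=\frac{1}{2\pi}\frac{\xi^\perp}{|\xi|^2}(1-e^{-|\xi|^2/4})$ decays only like $|\xi|^{-1}$, each summand $\frac{1}{\nu\sqrt t}\,v^G\lw(\frac{x-z(t)}{\sqrt{\nu t}}\rw)$ is of size $\nu^{-1/2}$ on the support of $\nabla w_{1,a}$, so bounding the two pieces separately diverges. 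What saves this term is the \emph{cancellation} between the two evaluations, quantified by $|\wtd z(t)-z(t)|\lesssim\nu t$ from \eqref{est-z2}: the difference of the algebraic (non-exponential) parts of the kernels is $O(|\wtd z-z|)=O(\nu t)$ on a region where $|x-z(t)|,|x-\wtd z(t)|\ge c_T$, and only the residual exponential pieces are negligible for the reason you give. Your proposal never invokes \eqref{est-z2}, so this term is left unbounded.

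Second, for $f_3=-\frac{\sqrt t}{\nu}\vb\cdot\nabla\wtd\w^E$ you correctly sense that decay of $\vb$ at large $|\xi|$ is needed, but Lemma \ref{elip} supplies only a flat $L^\infty$ bound with no spatial decay, so it cannot absorb the $\nu^{-1}$. Even the generic Biot--Savart decay $|\vb(\xi)|\lesssim|\xi|^{-1}$ would only give $|\vb|\lesssim\sqrt{\nu t}$ on the support of $\nabla\wtd\w^E$ (where $|\xi|\ge d_T/\sqrt{\nu t}$), leaving a factor $t/\sqrt{\nu}$ that still blows up. The paper's essential ingredient is the zero-mean property $\int_{\Rt}\wb(\xi,t)\,d\xi=0$, which upgrades the decay to $|\vb(\xi)|\lesssim(1+|\xi|^2)^{-1}\lw(\|\wb\|_{L^2_p}+\|\nabla\wb\|_{L^2_p}\rw)$ via the weighted elliptic estimate of Lemma \ref{lem-velocity}; then $(1+|\xi|^2)^{-1}\lesssim\nu t$ on the support, and $\frac{\sqrt t}{\nu}\cdot\nu t=t^{3/2}$ gives exactly the claimed power. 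Without identifying this cancellation your bookkeeping cannot come out to $t^{3/2}$ free of negative powers of $\nu$.
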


We will give a proof at the end of this subsection, after proving some useful lemmas. First, let us write $f$ as:
\[
f(x,t)=f_1(x,t)+f_2(x,t)+f_3(x,t)
\]
where
\[
\begin{cases}
f_1(x,t)&=-\frac{1}{\nu \sqrt{t}}\lw(v^G\lw(\frac{x-\wtd z(t)}{\sqrt{\nu t}}\rw)-v^G\lw(\frac{x- z(t)}{\sqrt{\nu t}}\rw)\rw)\cdot \nabla w_{1,a}  - \sqrt{\nu}(v_{1,a}\cdot\nabla w_{1,a})  \\&\quad +\frac{1}{2\pi \nu^{3/2}}\frac{(x- z(t))^\perp}{|x- z(t)|^2}e^{-\frac{|x- z(t)|^2}{4\nu t}}\cdot \nabla  \w^E+\sqrt{\nu}\triangle w_{1,a},\\
f_2(x,t)&= - \ve\cdot \nabla \wtd \w^E,\\
f_3(x,t)&= - \frac{\sqrt{t}}{\nu}\vb\cdot \nabla \wtd \w^E,\\
\end{cases}
\]
In what follows, we bound $\|f_i(t)\|_{\lbb}$ for each $i\in \{1,2,3\}$.

\begin{lemma} There holds 
\[
\|f_1(t)\|_{\lbb}\lesssim \sqrt{\nu}
\]
uniformly in $\nu>0$.
\end{lemma}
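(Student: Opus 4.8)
The plan is to bound the four terms making up $f_1$ separately, in each case using that on the support of $w_{1,a}(t)$ (and of $\nabla\w^E(t)$) the point $x$ stays a fixed distance $d_T>0$ away from $z(t)$; this follows from \eqref{def-dT} together with the fact, recalled in Theorem \ref{Lacave}, that $\w^E(t)$ vanishes in a neighborhood of $z(t)$. The two terms $-\sqrt\nu\,v_{1,a}\cdot\nabla w_{1,a}$ and $\sqrt\nu\,\triangle w_{1,a}$ are harmless: by Proposition \ref{prop-tWE} the quantities $\|v_{1,a}\|_{W^{2,\infty}}$ and $\|w_{1,a}\|_{W^{2,4}}$ are bounded uniformly in $\nu$ and $w_{1,a}(t)$ has compact support of bounded measure, so both terms are $O(\sqrt\nu)$ in $L^4$, and hence in $\lbb$ after using H\"older's inequality on the support to pass from $L^4$ to $L^{4/3}$.

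Next I would treat the ``heat-kernel'' term $\frac{1}{2\pi\nu^{3/2}}\frac{(x-z(t))^\perp}{|x-z(t)|^2}e^{-|x-z(t)|^2/(4\nu t)}\cdot\nabla\w^E$. It is supported where $\nabla\w^E(t)\neq0$, hence where $|x-z(t)|\ge d_T$, and there its scalar coefficient is bounded by $d_T^{-1}e^{-d_T^2/(4\nu t)}\le d_T^{-1}e^{-d_T^2/(4\nu T)}$ for $t\in[0,T]$. Since $\nu^{-3/2}e^{-d_T^2/(4\nu T)}\lesssim\sqrt\nu$ uniformly in $\nu>0$ (the Gaussian factor beating any negative power of $\nu$), this term is $\lesssim\sqrt\nu\,\|\nabla\w^E\|_{\lbb}\lesssim\sqrt\nu$.

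The main term is the first one, $-\frac{1}{\nu\sqrt t}\bigl(v^G(\tfrac{x-\wtd z(t)}{\sqrt{\nu t}})-v^G(\tfrac{x-z(t)}{\sqrt{\nu t}})\bigr)\cdot\nabla w_{1,a}$, where the singular prefactor $(\nu\sqrt t)^{-1}$ has to be absorbed by the smallness of the difference of the two values of $v^G$. On $\mathrm{supp}(\nabla w_{1,a}(t))$ we have $|x-z(t)|\ge d_T$, and since $|\wtd z(t)-z(t)|\le C_T\nu t$ by \eqref{est-z2} we also get $|x-\wtd z(t)|\ge d_T/2$ once $\nu t$ is small; thus the two arguments of $v^G$ both have modulus $\gtrsim d_T/\sqrt{\nu t}$ while differing by $|z(t)-\wtd z(t)|/\sqrt{\nu t}\le C_T\sqrt{\nu t}$. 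Using that $|\nabla v^G(\xi)|\lesssim|\xi|^{-2}$ in the far field (the exponential correction in $v^G$ and its derivative being negligible there), the mean value theorem along the segment joining the two arguments gives a difference of size $\lesssim\sqrt{\nu t}\cdot(\nu t)/d_T^2\lesssim C_T(\nu t)^{3/2}$. Dividing by $\nu\sqrt t$ leaves the pointwise bound $\lesssim\sqrt\nu\,t\,|\nabla w_{1,a}|$, and taking $\lbb$ norms with the $W^{2,4}$ bound and compact support of $w_{1,a}$ from Proposition \ref{prop-tWE} yields $\lesssim\sqrt\nu\,t\lesssim\sqrt\nu$ on $[0,T]$. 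Summing the four contributions gives the claim.

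The step I expect to be the real obstacle is this last estimate: one must check that $\nabla v^G$ actually \emph{decays} like $|\xi|^{-2}$ rather than merely remaining bounded at the (large) arguments in play, and that the whole segment joining $\tfrac{x-\wtd z(t)}{\sqrt{\nu t}}$ and $\tfrac{x-z(t)}{\sqrt{\nu t}}$ stays in that far field, so that the factor $(\nu t)^{-1}$ coming from $|\xi|^{-2}$, together with the $\sqrt{\nu t}$ from the separation and the $(\nu\sqrt t)^{-1}$ prefactor, conspire to leave a net positive power of $\nu$.
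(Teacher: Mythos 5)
Your proof is correct and follows essentially the same route as the paper: the same splitting of $f_1$ into four pieces, the same use of the uniform distance $d_T$ between the support of $w_{1,a}$ (resp.\ $\nabla\w^E$) and the vortex, and the same net pointwise bound $|v^G(\tfrac{x-\wtd z}{\sqrt{\nu t}})-v^G(\tfrac{x-z}{\sqrt{\nu t}})|\lesssim (\nu t)^{3/2}$ for the main term. The only cosmetic difference is that you obtain this last estimate by the mean value theorem in the scaled variable using $|\nabla v^G(\xi)|\lesssim|\xi|^{-2}$ in the far field, whereas the paper works in the unscaled variable, splitting $\tfrac{1}{\sqrt{\nu t}}v^G(\tfrac{\eta}{\sqrt{\nu t}})$ into the pure Biot--Savart kernel (handled by a direct Lipschitz computation away from the origin) and the Gaussian correction (bounded term by term by $e^{-c_T^2/4\nu t}\lesssim\nu t$) --- the two computations are interchangeable.
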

\begin{proof}
First we see that  
\[
\lw\|- \sqrt{\nu}(v_{1,a}\cdot\nabla w_{1,a}) - \frac{1}{2\pi \nu^{3/2}}\frac{(x- z(t))^\perp}{|x- z(t)|^2}e^{-\frac{|x- z(t)|^2}{4\nu t}}\cdot \nabla  \w^E+\sqrt{\nu}\triangle \w_{1,a}\rw\|_{\lbb}\lesssim \sqrt{\nu}
\]
thanks to the fact that $\w^E$ is supported away from $z(t)$ and $\wtd z(t)$, and $ w_{1,a}$ is bounded in $W^{2,4}$, by Proposition \ref{prop-tWE}.
Now for the first term in $f_1$, it suffices to prove that 
\beq\label{vG}
\frac{1}{ \sqrt{\nu t}}\lw|v^G\lw(\frac{x-\wtd z(t)}{\sqrt{\nu t}}\rw)-v^G\lw(\frac{x- z(t)}{\sqrt{\nu t}}\rw)\rw|\lesssim\nu  t\qquad \text{for all}\quad x\in \text{supp}(w_{1,a}).
\eeq
As long as the above claim is proved, we would  get \[
\bega
&\lw\| \frac{1}{\nu \sqrt{t}}\lw(v^G\lw(\frac{x-\wtd z(t)}{\sqrt{\nu t}}\rw)-v^G\lw(\frac{x- z(t)}{\sqrt{\nu t}}\rw)\rw)\cdot \nabla w_{1,a}\rw\|_{\lbb}
\\&\qquad\lesssim \sqrt \nu \|\nabla w_{1,a}(t)\|_{\lbb(\text{supp}(w_{1,a}))}\lesssim \sqrt \nu
\enda
\]
by Proposition \ref{prop-tWE}.

Now we shall prove the inequality \eqref{vG}. To this end, let us denote \beq\label{nota1}\eta_1=x-\wtd z(t),\qquad \text{and}\quad \eta_2=x-z(t)\eeq
The left hand side of \eqref{vG} can be re-written as:
\beq \label{est900}\frac{1}{\sqrt{\nu t}}\lw(v^G(\frac{\eta_1}{\sqrt{\nu t}})-v^G(\frac{\eta_2}{\sqrt{\nu t}})\rw)=\frac{1}{2\pi}\lw(V_1(\eta_1,\eta_2)+V_2(\eta_1,\eta_2)\rw)\eeq
where\[\begin{cases}V_1(\eta_1,\eta_2)&=\lw(\frac{\eta_1^\perp}{|\eta_1|^2}-\frac{\eta_2^\perp}{|\eta_2|^2}\rw),\\V_2(\eta_1,\eta_2)&=\lw(\frac{\eta_2^\perp}{|\eta_2|^2}e^{-|\eta_2|^2/4\nu t}-\frac{\eta_1^\perp}{|\eta_1|^2}e^{-|\eta_1|^2/4\nu t}\rw).\\\end{cases}\]

When $x\in \text{supp}(\wtd \w^E(t))$, by the properties established in Section \ref{sec-appVW}, we have a positive constant $c_T$, independent of $\nu$, such that  \beq\label{est84}|x-z(t)|\ge c_T\qquad \text{and}\qquad |x-\wtd z(t)|\ge c_T\qquad \forall t\in [0,T].\eeq
This implies that $|\eta_1|\ge c_T$ and $|\eta_2|\ge c_T$, upon recalling the notations \eqref{nota1}.
Thus, we get 
\[\bega|V_1(\eta_1,\eta_2)|&=\lw|\frac{\eta_1^\perp}{|\eta_1|^2}-\frac{\eta_2^\perp}{|\eta_2|^2}\rw|\le \lw|\frac{\eta_1^\perp}{|\eta_1|^2}-\frac{\eta_2^\perp}{|\eta_1|^2}\rw|+\lw|\frac{\eta_2^\perp}{|\eta_1|^2}-\frac{\eta_2^\perp}{|\eta_2|^2}\rw|
\\&\le \frac{|\eta_1-\eta_2|}{|\eta_1|^2}+|\eta_2|\frac{\lw||\eta_2|^2-|\eta_1|^2\rw|}{|\eta_1|^2|\eta_2|^2}
\\&\le c_T^{-2}|\eta_1-\eta_2|+\frac{1}{|\eta_1|^2|\eta_2|}\lw||\eta_2|-|\eta_1|\rw|(|\eta_1|+|\eta_2|)\lesssim |\eta_1-\eta_2|\\
&=|(x-\wtd z(t))-(x-z(t))|=|\wtd z(t)-z(t)|\lesssim \nu t\qquad (\text{by the estimate \eqref{est-z2}}).
\enda
\]
Hence
\beq\label{est81}|V_1(\eta_1,\eta_2)|\lesssim \nu t.\eeq
Now for $V_2(\eta_1,\eta_2)$, note that we shall only consider $x\in\text{supp}(\wtd \w^E(t))$, in which we get \eqref{est84}. In this case we get \beq \label{est901}\bega|V_2(\eta_1,\eta_2)|&\le |\eta_2|^{-1}e^{-|\eta_2|^2/4\nu t}+|\eta_1|^{-1}e^{-|\eta_1|^2/4\nu t}\le 2 c_T^{-1} e^{-c_T^2/4\nu t} \lesssim \nu t\enda\eeq
Combining \eqref{est900}, \eqref{est81} and \eqref{est901}, we get the desired inequality \eqref{vG}. The bound for the first term is complete. This concludes the proof.
\end{proof}
\begin{lemma}There holds 
\[
\|f_2(t)\|_{\lbb}\lesssim \|\we(t)\|_{\lbb}
\]
\end{lemma}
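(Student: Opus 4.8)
The plan is to estimate $f_2 = -\ve \cdot \nabla \wtd \w^E$ directly by H\"older's inequality, exploiting that the approximate vorticity $\wtd \w^E$ is a fixed smooth, compactly supported profile controlled by Corollary \ref{cor-tWE}, and then to convert the resulting $\|\ve\|_{L^\infty}$ bound into a bound on $\|\we\|_{\lbb}$ through the Biot-Savart law. Recall from \eqref{forms} that $\ve = K \star \we$, so $\ve$ is precisely the velocity field generated by the regular remainder vorticity $\we$.

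First I would bound the two constituent norms of $\|f_2\|_{\lbb} = \|f_2\|_{L^4} + \|f_2\|_{L^{4/3}}$ separately. By H\"older's inequality,
\[
\|f_2(t)\|_{L^4} \le \|\ve(t)\|_{L^\infty}\, \|\nabla \wtd \w^E(t)\|_{L^4}, \qquad
\|f_2(t)\|_{L^{4/3}} \le \|\ve(t)\|_{L^\infty}\, \|\nabla \wtd \w^E(t)\|_{L^{4/3}}.
\]
By Corollary \ref{cor-tWE}, $\|\wtd \w^E(t)\|_{W^{2,4}} \le C_T$, so $\|\nabla \wtd \w^E(t)\|_{L^4} \lesssim 1$ uniformly in $\nu$; moreover $\wtd \w^E(t)$ is compactly supported with $\mathfrak{m}(\text{supp}(\wtd \w^E(t))) \le C_T$, hence interpolating on the support gives $\|\nabla \wtd \w^E(t)\|_{L^{4/3}} \lesssim \mathfrak{m}(\text{supp}(\wtd \w^E(t)))^{1/2}\,\|\nabla \wtd \w^E(t)\|_{L^4} \lesssim 1$. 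Consequently both norms of $f_2$ are controlled by $\|\ve(t)\|_{L^\infty}$.

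It then remains to absorb $\|\ve(t)\|_{L^\infty}$ into $\|\we(t)\|_{\lbb}$. Since $\ve = K \star \we$ and the kernel $K(x) = \frac{1}{2\pi}\frac{x^\perp}{|x|^2}$ is locally in $L^{4/3}$ and in $L^4$ away from the origin, the same elliptic estimate already used in the proof of Proposition \ref{prop-tWE} (namely $\|K\star w\|_{L^\infty} \lesssim \|w\|_{\lbb}$) yields $\|\ve(t)\|_{L^\infty} \lesssim \|\we(t)\|_{\lbb}$. Combining this with the previous paragraph gives $\|f_2(t)\|_{\lbb} \lesssim \|\we(t)\|_{\lbb}$, as claimed. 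There is no genuine obstacle here: the estimate is a one-line H\"older bound followed by a standard Biot-Savart estimate, and the only point requiring care is the $L^{4/3}$ control of $\nabla \wtd \w^E$, which relies on the uniform bound on the Lebesgue measure of its support furnished by Corollary \ref{cor-tWE}.
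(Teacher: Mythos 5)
Your proof is correct and follows essentially the same route as the paper: H\"older's inequality to pull out $\|\ve(t)\|_{L^\infty}$, Corollary \ref{cor-tWE} to control $\|\nabla\wtd\w^E(t)\|_{\lbb}$, and the elliptic (Biot--Savart) estimate $\|\ve(t)\|_{L^\infty}\lesssim\|\we(t)\|_{\lbb}$ from Lemma \ref{lem-velocity}. Your extra remark on using the bounded support measure to pass from the $L^4$ to the $L^{4/3}$ bound on $\nabla\wtd\w^E$ is a correct detail that the paper leaves implicit.
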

\begin{proof}
We have
\[
\|f_2(t)\|_{\lbb}=\|\ve(t)\cdot \nabla\wtd \w^E(t)\|_{\lbb}\le \|\ve(t)\|_{L^\infty}\|\nabla \wtd \w^E(t)\|_{\lbb}\lesssim \|\we(t)\|_{\lbb}
\]
by Corollary \ref{cor-tWE} and Lemma \ref{lem-velocity}. The proof is complete.
\end{proof}
\begin{lemma}
There holds 
\[
\|f_3(t)\|_{\lbb}\lesssim t^{3/2}\lw(\|\wb(t)\|_{L^2_p}+\|\nabla \wb(t)\|_{L^2_p}\rw)
\]
\end{lemma}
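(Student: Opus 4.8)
The plan is to extract from the mean-zero structure of $\wb$ a quadratic spatial decay of the associated velocity $\vb = K\star_\xi\wb$, which is precisely what tames the dangerous prefactor $\tfrac{\sqrt t}{\nu}$ in $f_3$. Writing $\xi=\tfrac{x-\wtd z(t)}{\sqrt{\nu t}}$, recall that $f_3(x,t)=-\tfrac{\sqrt t}{\nu}\,\vb(\xi,t)\cdot\nabla\wtd\w^E(x,t)$ and that, by Corollary \ref{cor-tWE}, $\nabla\wtd\w^E(t)$ is supported where $|x-\wtd z(t)|\ge c_T>0$. On this support one has $|\xi|\ge c_T(\nu t)^{-1/2}$, so that $|\xi|^{-2}\le c_T^{-2}\,\nu t$ and, in the vanishing-viscosity regime, $|\xi|$ is large. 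The whole lemma therefore reduces to the pointwise bound
\[
|\vb(\xi,t)|\lesssim \frac{\|\wb(t)\|_{L^2_p}+\|\nabla\wb(t)\|_{L^2_p}}{|\xi|^2}\qquad\text{for }|\xi|\text{ large.}
\]

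First I would record that $\wb$ has zero mean. Since $\int_{\Rt}\w^{B,\nu}\,dx=1$ is conserved (see \eqref{mass}) and $\int_{\Rt}G\,d\xi=1$, the decomposition $w_2=G+\nu t(w_{2,a}+\wb)$ forces $\int_{\Rt}(w_{2,a}+\wb)\,d\xi=0$; integrating the elliptic equation \eqref{solapp} and using that both $\int_{\Rt}\Lambda w\,d\xi=0$ (divergence form) and $\int_{\Rt}(1-\mathcal L)w\,d\xi=\int_{\Rt}w\,d\xi$, together with $\int_{\Rt}A_0\,d\xi=0$ (the angular harmonics $\sin(2\psi),\sin(3\psi)$ average to zero in $\theta$), yields $\int_{\Rt}w_{2,a}\,d\xi=0$, hence $\int_{\Rt}\wb\,d\xi=0$. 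Exploiting this, I would split the Biot--Savart integral at $|\zeta|=|\xi|/2$ and, on $|\zeta|\le|\xi|/2$, subtract the mean to write
\[
\frac{1}{2\pi}\int_{|\zeta|\le|\xi|/2}\Big(\frac{(\xi-\zeta)^\perp}{|\xi-\zeta|^2}-\frac{\xi^\perp}{|\xi|^2}\Big)\wb(\zeta)\,d\zeta .
\]
The kernel difference is $\lesssim|\zeta|/|\xi|^2$, and the first moment $\int_{\Rt}|\zeta|\,|\wb|\,d\zeta$ is controlled by $\|\wb\|_{L^2_p}$ via Cauchy--Schwarz against the Gaussian weight, giving the desired $|\xi|^{-2}$ decay on this region.

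The main obstacle will be the complementary region $|\zeta|>|\xi|/2$, where the kernel $\tfrac{1}{|\xi-\zeta|}$ is singular at $\zeta=\xi$; here I would trade the singularity against the Gaussian smallness of $\wb$. Setting $g:=\wb\,e^{|\cdot|^2/16}$, the choice of exponent $\tfrac14\cdot\tfrac14$ ensures $|\zeta|^2e^{|\zeta|^2/8}\lesssim e^{|\zeta|^2/4}$, so that $\|g\|_{L^2}\lesssim\|\wb\|_{L^2_p}$ and $\|\nabla g\|_{L^2}\lesssim\|\wb\|_{L^2_p}+\|\nabla\wb\|_{L^2_p}$; the two-dimensional Sobolev embedding then gives $\|g\|_{L^4}\lesssim\|\wb\|_{L^2_p}+\|\nabla\wb\|_{L^2_p}$. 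Consequently $\|\wb\|_{L^4(|\zeta|>|\xi|/2)}\lesssim e^{-c|\xi|^2}\big(\|\wb\|_{L^2_p}+\|\nabla\wb\|_{L^2_p}\big)$, and since $\tfrac{1}{|\xi-\cdot|}\in L^{4/3}_{\mathrm{loc}}(\Rt)$, H\"older's inequality bounds this region by an exponentially small multiple of $\|\wb\|_{L^2_p}+\|\nabla\wb\|_{L^2_p}$, hence by $\lesssim|\xi|^{-2}\big(\|\wb\|_{L^2_p}+\|\nabla\wb\|_{L^2_p}\big)$ for $|\xi|$ large; the far tail of $\tfrac{\xi^\perp}{|\xi|^2}$ paired with $\wb$ over the same region is estimated identically.

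Combining the two regions establishes the advertised velocity decay. Substituting $|\xi|^{-2}\le c_T^{-2}\,\nu t$ on $\mathrm{supp}(\nabla\wtd\w^E)$ then gives the pointwise estimate $|f_3(x,t)|\lesssim\tfrac{\sqrt t}{\nu}\cdot\nu t\cdot\big(\|\wb\|_{L^2_p}+\|\nabla\wb\|_{L^2_p}\big)\,|\nabla\wtd\w^E(x,t)|=t^{3/2}\big(\|\wb\|_{L^2_p}+\|\nabla\wb\|_{L^2_p}\big)\,|\nabla\wtd\w^E(x,t)|$, and taking the $\lbb$ norm in $x$ together with $\|\nabla\wtd\w^E(t)\|_{\lbb}\lesssim1$ from Corollary \ref{cor-tWE} completes the proof. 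I expect the delicate point to be the uniform-in-$\nu$ control of the singular region, which is why the Gaussian weight $e^{|\cdot|^2/16}$ (strictly below the critical $e^{|\cdot|^2/8}$) must be chosen so that both $\|g\|_{L^2}$ and $\|\nabla g\|_{L^2}$ close against $\|\wb\|_{L^2_p}+\|\nabla\wb\|_{L^2_p}$ alone.
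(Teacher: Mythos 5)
Your argument is correct and follows the same route as the paper: the zero mean of $\wb$ yields the quadratic decay $|\vb(\xi,t)|\lesssim (1+|\xi|^2)^{-1}\big(\|\wb(t)\|_{L^2_p}+\|\nabla \wb(t)\|_{L^2_p}\big)$, which on the support of $\nabla\wtd\w^E(t)$ (where $|\xi|\gtrsim (\nu t)^{-1/2}$) produces exactly the factor $\nu t$ that cancels the prefactor $\sqrt{t}/\nu$. The only difference is presentational: the paper gets the weighted velocity decay by invoking its Appendix Lemma \ref{lem-velocity} together with the bound $\|(1+|\xi|^2)\wb\|_{\lbb}\lesssim \|\wb\|_{L^2_p}+\|\nabla\wb\|_{L^2_p}$, whereas you reprove that decay inline via the kernel splitting and also supply the verification of $\int_{\Rt}\wb\,d\xi=0$, which the paper asserts without proof.
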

\begin{proof}
We recall that 
 \[
 f_3(x,t)=\frac{\sqrt{t}}{\nu}\vb\lw(\xi,t\rw)\cdot \nabla\wtd \w^E(t,x),\qquad \xi=\frac{x-\wtd z(t)}{\sqrt{\nu t}}.
  \]
We shall only consider $x\in \text{supp}(\wtd \w^E(t))$. Since $\wtd \w^E(t)$ is supported away from $\wtd z(t)$, there exists $d_T>0$ such that 
\beq\label{away12}
|x-\wtd z(t)|\ge d_T\qquad \text{for}\quad x\in\text{supp}(\wtd \w^E(t)).
\eeq
Since $\int_{\Rt}\wb(\xi,t)d\xi=0$, by Lemma \eqref{lem-velocity}, we get 
\[\bega
\|(1+|\xi|^2)\vb(t)\|_{L^\infty}&\lesssim \|(1+|\xi|^2)\wb(t)\|_{L^4}+\|(1+|\xi|^2)\wb(t)\|_{L^{4/3}}\\
&\lesssim \|\wb(t)\|_{L^2_p}+\|\nabla \wb(t)\|_{L^2_p}.\\
\enda
\]
This implies that, for $x$ in the support of $\wtd \w^E(t)$, we get 
\[
\bega
|\vb(t,\xi)|&\lesssim \frac{1}{1+|\xi|^2}\lw(\|\wb(t)\|_{L^2_p}+\|\nabla \wb(t)\|_{L^2_p}\rw)
\lesssim (\nu t) \lw(\|\wb(t)\|_{L^2_p}+\|\nabla \wb(t)\|_{L^2_p}\rw).\\
\enda
\]
Thus we get 
\[
\|f_3(t)\|_{\lbb}\lesssim \frac{\sqrt{t}}{\nu} \|\vb\lw(\xi,t\rw)\cdot \nabla\wtd \w^E(t,x)\|_{\lbb} \lesssim t^{3/2}\lw(\|\wb(t)\|_{L^2_p}+\|\nabla \wb(t)\|_{L^2_p}\rw).
\]
The proof is complete.
\end{proof}
 We conclude this subsection by proving the Proposition \ref{forcing}
\begin{proof}[Proof of Proposition \ref{forcing}]
The proof follows as a direct consequence of the previous lemmas for $f_i,\quad i\in \{1,2,3\}$ in this subsection.
\end{proof}

\subsection{Apriori estimates for the remainder}
In this section, we give a proof for our main Theorem \ref{goal-est}, stated at the beginning of this subsection. We recall from Section \ref{heat-trans} that $\we$ solves the heat transport equation
\[
\pt_t \we+ u^\nu\cdot \nabla \we-\nu \triangle \we=f(x,t) .
\]
A standard $L^4\cap L^{4/3}$ estimate for the heat transport equation yields 
\[\bega
\dfrac{d}{dt}\lw(\|\we(t)\|_{\lbb}\rw)&\lesssim \|f(t)\|_{\lbb}\\
&\lesssim \|\we(t)\|_{\lbb}+t^{3/2}\lw(\|\wb(t)\|_{L^2_p}+\|\nabla \wb(t)\|_{L^2_p}\rw)+\sqrt{\nu},\\
\enda
\]
using Proposition \ref{forcing}. Now applying Gronwall lemma for the above inequality, we have 
\beq\label{est8}
\bega
\|\we(t)\|_{\lbb}&\lesssim \int_0^t \lw(s^{3/2}(\|\wb(t)\|_{L^2_p}+\|\nabla \wb(t)\|_{L^2_p})+\sqrt\nu \rw)ds\\
&\lesssim \int_0^t s^{3/2}(\|\wb(t)\|_{L^2_p}+\|\nabla \wb(t)\|_{L^2_p})ds+\nu^{1/2} t.\\
\enda
\eeq
The proof is complete.

\section{Proof of inviscid limit}

In this section, we conclude the proof for inviscid limit, using the apriori estimates obtained from the previous sections. 
Let us first prove the following proposition, before proving our main theorem, stated in the first part of this paper. 
\begin{proposition}
There exists a time $T>0$, independent of the viscosity $\nu$, such that 
\[
\sup_{0\le t\le T}\lw(\|\wb(t)\|_{L^2_p}+\|\we(t)\|_{\lbb}\rw)\lesssim 1,
\]
uniformly in $\nu$. \end{proposition}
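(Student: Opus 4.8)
The plan is to run a continuity (bootstrap) argument that couples the two a priori estimates already in hand: Proposition~\ref{Eest} controls the rescaled irregular remainder $\wb$, while Proposition~\ref{goal-est} controls the regular remainder $\we$. These are genuinely coupled, since the right-hand side of \eqref{apriori} carries $\|\ve\|_{L^\infty}$, and $\ve = K\star\we$ is estimated by $\we$ through the Biot--Savart bound $\|\ve\|_{L^\infty}\lesssim \|\we\|_{\lbb}$ (Lemma~\ref{lem-velocity}); conversely the bound in Proposition~\ref{goal-est} is driven by $\wb$. I would therefore track the single quantity $N(t):=\sup_{0\le s\le t}Y(s)+\int_0^t D(s)\,ds$, where $Y(t)=\|\wb(t)\|_{L^2_p}^2$ and $D(t)=\|(1+|\xi|)\wb(t)\|_{L^2_p}^2+\|\nabla\wb(t)\|_{L^2_p}^2$ (note $D\ge Y$), and show $N$ stays bounded on an interval $[0,T]$ with $T$ independent of $\nu$.

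First I would record the two reduced inequalities. Writing $Z(t)=\|\we(t)\|_{\lbb}$, Proposition~\ref{Eest} together with $\|\ve\|_{L^\infty}\lesssim Z$ gives
\[
tY'(t)+\kappa D(t)\lesssim tY(t)^{5/2}+\nu t Z(t)^4+t^{-1}Z(t)^2,
\]
while Proposition~\ref{goal-est} and Cauchy--Schwarz (using $D\ge \|\nabla\wb\|_{L^2_p}^2$) give
\[
Z(t)\lesssim t^2\Big(\int_0^t D(s)\,ds\Big)^{1/2}+\nu^{1/2}t.
\]
Both remainders vanish at the origin: $\we(0)=0$ since $w_{1,a}(0)=0$ forces $\wtd\w^E(0)=\w_0^E=\w^{E,\nu}(0)$, and the matching of $w_2$ to the Oseen vortex $G$ as $t\to0^+$ gives $\|\wb(t)\|_{L^2_p}\to0$; hence $N(0^+)=0$, which seeds the continuity argument.

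The heart of the matter is to assume the bootstrap hypothesis $N(t)\le M$ on $[0,T]$ for a fixed constant $M$ (e.g.\ $M=1$) and improve it to $N(t)\le M/2$. Under this hypothesis the second inequality yields $Z(t)^2\lesssim t^4 M+\nu t^2$, so $t^{-1}Z^2\lesssim t^3 M+\nu t$ and $\nu tZ^4$ is of even higher order. To bound $\sup Y$ I would rewrite $tY'+\kappa Y\le tY'+\kappa D$ and use $D\ge Y$ to get $\tfrac{d}{dt}(t^\kappa Y)=t^{\kappa-1}(tY'+\kappa Y)\lesssim t^\kappa Y^{5/2}+\nu t^\kappa Z^4+t^{\kappa-2}Z^2$; integrating from $0$ (with $t^\kappa Y\big|_{0}=0$) and inserting the $Z^2$ bound gives $\sup_{[0,t]}Y\lesssim (M^{5/2}+M+\nu)\,t$. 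Separately, integrating $(tY)'+\kappa D\lesssim Y+tY^{5/2}+\nu tZ^4+t^{-1}Z^2$ and using $\int_0^t Y\,ds\le tM$ gives
\[
tY(t)+\kappa\int_0^t D\,ds\lesssim \int_0^t Y\,ds+M^{5/2}t^2+Mt^4+\nu t^2\lesssim (M^{5/2}+M+\nu)\,t.
\]
Thus $N(t)\lesssim (M^{5/2}+M+\nu)\,t$ with constants independent of $\nu$, so for $\nu\le1$ choosing $T$ small (depending only on $M$ and universal constants) forces $N(t)\le M/2$ on $[0,T]$, closing the bootstrap. This gives $\sup_{[0,T]}\|\wb\|_{L^2_p}\lesssim1$ and $\int_0^T D\lesssim1$, and feeding these back into Proposition~\ref{goal-est} yields $\sup_{[0,T]}\|\we\|_{\lbb}\lesssim1$, which is the claim.

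I expect the main obstacle to be the singular forcing $t^{-1}\|\ve\|_{L^\infty}^2$ in \eqref{apriori}: taken alone this term is not integrable near $t=0$, and the whole scheme hinges on the fact that $\we$ vanishes to order $t$ at the origin and that Proposition~\ref{goal-est} carries the extra $t^{3/2}$ weight, so that after squaring and integrating, $t^{-1}Z^2$ becomes not merely integrable but small. The delicate bookkeeping is the power-counting in $t$ that makes the coercive terms dominate the coupling, together with checking that every $\nu$-dependent contribution stays uniformly small for $\nu\le1$; this is precisely what renders $T$ independent of $\nu$.
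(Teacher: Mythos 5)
Your proposal is correct and follows essentially the same route as the paper: both combine Propositions \ref{Eest} and \ref{goal-est} through the Biot--Savart bound $\|\ve\|_{L^\infty}\lesssim\|\we\|_{\lbb}$, exploit the extra powers of $t$ in the $\we$ estimate to tame the singular $t^{-1}\|\ve\|_{L^\infty}^2$ forcing, and close on a $\nu$-independent time interval. The only cosmetic difference is that the paper packages everything into the single functional $\mathcal{G}(t)=\|\wb(t)\|_{L^2_p}^2+\int_0^t s^{-1}(\|\wb\|_{L^2_p}^2+\|\nabla\wb\|_{L^2_p}^2)\,ds$ and concludes by an ODE comparison $\mathcal{G}'\lesssim \mathcal{G}^{5/2}+\nu t^{10}\mathcal{G}^2+\nu^3t^4+t^3\mathcal{G}+\nu$, whereas you run an equivalent continuity/bootstrap argument on $N(t)$.
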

\begin{proof}
First, we recall the following estimates for $\|\wb(t)\|_{L^2_p}$
and $\|\we(t)\|_{\lbb}$ proven in Propositions \ref{Eest} and \ref{goal-est}.
\beq \label{est9}
\begin{aligned}
&\frac{d}{dt}\|\wb(t)\|_{L^2_p}^2 +\frac{\kappa}{t}(\|(1+|\xi|)\wb(t)\|_{L^2_p}^2 + \|\nabla \wb(t)\|_{L^2_p}^2)\lesssim \|\wb(t)\|_{L^2_p}^5 + \nu  \|\ve(t)\|_{L^\infty}^4+  t^{-2}\|\ve(t)\|^2_{L^\infty}\\
& \|\we(t)\|_{\lbb}\lesssim \int_0^t s^{3/2}\lw(\|\wb(t)\|_{L^2_p}+\|\nabla \wb(t)\|_{L^2_p}\rw)ds+\nu^{1/2}t.\\
\end{aligned}
\eeq
Let 
$$\mathcal{G}(t)=\|\wb(t)\|_{L^2_p}^2+\int_0^t s^{-1}(\|\wb(s)\|^2_{L^2_p}+\|\nabla \wb(s)\|^2_{L^2_p})ds.$$
From the inequality \eqref{est9}, it is straight-forward that 
\beq \label{str}
\|\we(t)\|_{\lbb}\lesssim t^{5/2} \mathcal{G}(t)^{1/2}+\nu^{1/2}t .
\eeq
Thus, we have 
\[\bega
\mathcal{G}'(t)&=\frac{d}{dt}\|\wb(t)\|_{L^2_p}^2+t^{-1}\lw(\|\wb(t)\|^2_{L^2_p}+\|\nabla \wb(t)\|^2_{L^2_p}\rw)\\
&\lesssim \|\wb(t)\|_{L^2_p}^5+\nu \|\ve(t)\|_{L^\infty}^4+t^{-2}\|\ve(t)\|_{L^\infty}^2\qquad (\text{by \eqref{est9}})\\
&\lesssim \mathcal{G}(t)^{5/2}+\nu \|\we(t)\|_{\lbb}^4+t^{-2}\|\we(t)\|_{\lbb}^2\\
&\lesssim \mathcal{G}(t)^{5/2}+\nu \lw( t^{5/2} \mathcal{G}(t)^{1/2}+\nu^{1/2}t\rw)^4+t^{-2}\lw(t^{5/2} \mathcal{G}(t)^{1/2}+\nu^{1/2}t\rw)^2\qquad (\text{by \eqref{str}})\\
&\lesssim \mathcal{G}(t)^{5/2}+\nu t^{10} \mathcal G(t)^2+\nu^3 t^4+t^3 \mathcal{G}(t)+\nu.\\
\enda
\]
By standard ODE theory, we have a time $T>0$, which is independent of $\nu>0$, such that $\mathcal G(t)$ is uniformly bounded for $t\in [0,T]$. Since $\mathcal G(t)\ge \|\wb(t)\|_{L^2_p}$, the proof for $\|\wb(t)\|_{L^2_p}$ is complete. The bound $\|\we(t)\|_{\lbb}\lesssim 1$ follows from the inequality \eqref{str}.
\end{proof}
We conclude this section by proving our main theorem, stated in the first part of this paper.
\begin{proof}[Proof of theorem \ref{mainthm}]
We have proved that $\|\wb(t)\|_{L^2_p}$ is uniformly bounded in $\nu$. We recall from Section \ref{sec3} that 
\[
\w^{B,\nu}(t,x)=\frac{1}{\nu t}w_2(\xi,t)=\frac{1}{\nu t}\lw(G(\xi)+(\nu t)w_{2,a}+(\nu t)\wb\rw)=\frac{1}{\nu t}G(\xi)+w_{2,a}+\wb ,
\] 
where $G(\xi) = \frac{1}{4\pi} e^{-|\xi|^2/4}$ and $\xi = (x-\wtd z(t))/\sqrt{\nu t}$.  
We compute 
\beq\label{st1}
\bega
\lw\|\w^{B,\nu}(t,x)-\frac{1}{4\pi\nu t}e^{-\frac{|x-\wtd z(t)|^2}{4\nu t}}\rw\|_{L^1_x}& =\lw\|w_{2,a}(\xi,t)+\wb(\xi,t)\rw\|_{L^1_x}\\
&=\nu t\|w_{2,a}(t)+\wb(t)\|_{L^1_\xi}\lesssim (\nu t)\lw(\|w_{2,a}(t)\|_{L^2_p}+\|\wb(t)\|_{L^2_p}\rw)\\
&\lesssim (\nu t).
\enda
\eeq
For simplicity of notations, we denote by $G_{\wtd z(t)}(x)$ and $G_{z(t)}(x)$ the Gaussians $\frac{1}{4\pi\nu t}e^{-\frac{|x-\wtd z(t)|^2}{4\nu t}}$ and $\frac{1}{4\pi\nu t}e^{-\frac{|x-z(t)|^2}{4\nu t}}$, respectively. Our goal now is to compare the two Gaussians in $L^1$ norm. To this end, let us denote $A =\frac{|x-\wtd z(t)|^2}{4\nu t}$ and $B = \frac{|x- z(t)|^2}{4\nu t}$. We have 
\[
G_{\wtd z(t)}(x)-G_{z(t)}(x)=e^{-A}-e^{-B}=e^{-B}\lw(e^{B-A}-1\rw).
\]
We have 
\[\bega
B-A&=(4\nu t)^{-1}\lw(|x-z(t)|^2-|x-\wtd z(t)|^2\rw)=(4\nu t)^{-1}\lw(2x\cdot(\wtd z(t)-z(t))+|z(t)|^2-|\wtd z(t)|^2\rw)\\
&\lesssim (4\nu t)^{-1}\lw(|x||\wtd z(t)-z(t)|+|\wtd z(t)-z(t)|\rw)\\
&\lesssim |x|+1\qquad (\text{since}\quad |\wtd z(t)-z(t)|\lesssim \nu t)\\
&\le |x-z(t)|+|z(t)|+1\lesssim \frac{|x-z(t)|}{\sqrt{\nu t}}+1.
\enda
\]
Here we used the standard fact of the vortex-wave system that $|z(t)|\lesssim 1$ for any fixed interval of time. For, one can see that $|z(t)|\le |z_0|+\int_0^t |v^E(z(s),s)|ds\le |z_0|+t\|v^E\|_{L^\infty}.$ Hence we get
\beq \label{temp10000}
\bega
|G_{\wtd z(t)}(x)-G_{z(t)}(x)|&\lesssim e^{-\frac{|x-z(t)|^2}{4\nu t}+M_T\frac{|x-z(t)|}{\sqrt{\nu t}}}\qquad \text{for some}\quad M_T>0.\\
\enda
\eeq
Integrating both sides of the inequality \eqref{temp10000} in $x\in\Rt$, we have 
\[
\|G_{z(t)}-G_{\wtd z(t)}\|_{L^1_x}\lesssim \int_{\Rt}e^{-\frac{|x-z(t)|^2}{4\nu t}+M_T\frac{|x-z(t)|}{\sqrt{\nu t}}}dx.
\]
Making the change of variables $y=\frac{x-z(t)}{\sqrt{\nu t}}$ in the above integral, we thus obtain 
\beq\label{st2}
\|G_{z(t)}-G_{\wtd z(t)}\|_{L^1_x}\lesssim \nu t.
\eeq
Combining the inequalities \eqref{st1} and \eqref{st2}, we get 
\[
\lw\|\w^{B,\nu}(t,x)-\frac{1}{4\pi\nu t}e^{-\frac{|x-z(t)|^2}{4\nu t}}\rw\|_{L^1_x}\lesssim \nu t.
\]
The inequality $\|\w^{E,\nu}(t)-\w^E(t)\|_{\lbb}\lesssim \nu$ follows directly from the expansion \eqref{forms},  the inequality \eqref{str} and the uniform bound of $\mathcal G(t)$. The proof is complete.  
\end{proof}

\appendix 
\section{Appendix}

In this section, we collect several useful lemmas used in this paper.
\begin{lemma}[Elliptic estimates]\label{lem-velocity}
Let $v=K\star \w$ be the velocity vector field obtained from the vorticity $\w$ on $\Rt$. Define the norm $\|\cdot\|_{\lbb}=\|\cdot\|_{L^4}+\|\cdot\|_{L^{4/3}}$. There hold the following inequalities
\[
\bega
\|v\|_{L^\infty}&\lesssim \|\w\|_{\lbb},\qquad \|v\|_{L^\infty}\lesssim \|\w\|_{L^1\cap L^\infty} .
\enda
\]
Moreover, if $\int_{\Rt}\w(x)dx=0$, then 
\[
\| (1+|x|^2)v\|_{L^\infty} \lesssim \|(1+|x|^2)\w\|_{\lbb}.
\]
\end{lemma}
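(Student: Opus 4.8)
The plan is to exploit the explicit form of the Biot--Savart kernel $K(x)=\frac{1}{2\pi}\frac{x^\perp}{|x|^2}$, so that $|K(x)|=\frac{1}{2\pi|x|}$, and to split every convolution into a local piece on $\{|x-y|\le 1\}$ and a global piece on $\{|x-y|>1\}$. The point is that in $\Rt$ the function $|x|^{-1}$ lies in $L^q$ near the origin precisely when $q<2$ and in $L^q$ away from the origin precisely when $q>2$. For the first inequality I would pair the local piece (where $K\in L^{4/3}$) with $\|\w\|_{L^4}$ and the global piece (where $K\in L^4$) with $\|\w\|_{L^{4/3}}$, using H\"older in each region; this gives $\|v\|_{L^\infty}\lesssim \|\w\|_{L^4}+\|\w\|_{L^{4/3}}=\|\w\|_{\lbb}$. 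The second inequality is identical in spirit: on the local piece $K\in L^1$, paired with $\|\w\|_{L^\infty}$, and on the global piece $K\in L^\infty$, paired with $\|\w\|_{L^1}$.

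The substance of the lemma is the weighted bound under the cancellation hypothesis $\int_{\Rt}\w\,dx=0$. Writing $h=(1+|\cdot|^2)\w$, I would first dispose of bounded $|x|$ (say $|x|\le 2$), where $(1+|x|^2)$ is comparable to a constant, so the first inequality together with $\|\w\|_{\lbb}\le \|h\|_{\lbb}$ already suffices. For $|x|\ge 2$ I would split $\Rt=\{|y|\le |x|/2\}\cup\{|y|>|x|/2\}$. On the near-source region $\{|y|>|x|/2\}$ one has $|x|^2\le 4|y|^2\le 4(1+|y|^2)$, so the weight is absorbed pointwise, $|x|^2|\w(y)|\lesssim |h(y)|$, and the contribution is bounded by $(|K|\star|h|)(x)$, which is $\lesssim\|h\|_{\lbb}$ by the argument of the first inequality.

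The remaining far-field region $\{|y|\le |x|/2\}$ is where the zero-mean condition enters and is the main obstacle. Here $|x-y|\ge |x|/2$, and I would use $\int_{|y|\le|x|/2}\w = -\int_{|y|>|x|/2}\w$ to rewrite the integral as $\int_{|y|\le|x|/2}(K(x-y)-K(x))\w\,dy - K(x)\int_{|y|>|x|/2}\w\,dy$. The first term is estimated by the mean value theorem, $|K(x-y)-K(x)|\lesssim |y|/|x|^2$ on this region (the segment joining $x$ to $x-y$ staying in $\{|\cdot|\ge |x|/2\}$ where $|\nabla K|\lesssim |x|^{-2}$), reducing matters to $|x|^{-2}\int |y||\w|\,dy$; the weight $|x|^2$ then cancels, and $\int|y||\w|\,dy=\int\frac{|y|}{1+|y|^2}|h|\,dy\lesssim \|h\|_{\lbb}$ by H\"older (the tail $\int_{|y|>1}|y|^{-4}\,dy$ being finite). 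For the boundary term, $|K(x)|\lesssim |x|^{-1}$ while $\int_{|y|>|x|/2}|\w|\,dy\lesssim |x|^{-3/2}\|h\|_{L^{4/3}}$ from the decay of $(1+|y|^2)^{-1}$, giving a factor $|x|^{-5/2}$ that is more than enough after multiplication by $|x|^2$. Collecting the two regions yields $|x|^2|v(x)|\lesssim \|h\|_{\lbb}$ for $|x|\ge 2$, and hence $\|(1+|x|^2)v\|_{L^\infty}\lesssim \|(1+|x|^2)\w\|_{\lbb}$. The delicate points to get right are the convergence of $\int|y||\w|\,dy$ and of the boundary tail with only the $L^4\cap L^{4/3}$ control of $h$, which is exactly where the precise powers in the weight matter.
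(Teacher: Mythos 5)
Your proof is correct, and for the main (weighted) estimate it takes a genuinely different route from the paper. The two unweighted bounds are obtained essentially as in the paper: the paper splits the convolution at a radius $R$ and optimizes $R$ (yielding the slightly sharper product form $\|\w\|_{L^{4/3}}^{1/2}\|\w\|_{L^4}^{1/2}$), and deduces the $L^1\cap L^\infty$ bound by interpolation $\|\w\|_{L^p}\le\|\w\|_{L^1}^{1/p}\|\w\|_{L^\infty}^{1-1/p}$ rather than by your direct local/global pairing; these are cosmetic differences. For the weighted bound, the paper uses the zero-mean condition to subtract $K(x)$ under the integral over \emph{all} of $\Rt$ and then proves a single global pointwise inequality for the modified kernel, namely $|x|\bigl|\tfrac{x_1-y_1}{|x-y|^2}-\tfrac{x_1}{|x|^2}\bigr|\le\tfrac{4|y|}{|x-y|}$, after which the second power of the weight is absorbed via $|x|\le|y|+|x-y|$ and the first unweighted estimate applied to $|y|^2\w$. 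You instead localize the subtraction to the far-field region $\{|y|\le|x|/2\}$, use the mean value theorem there, absorb the weight directly on $\{|y|>|x|/2\}$, and pay a boundary term $K(x)\int_{|y|>|x|/2}\w$ whose smallness you extract from the $L^{4/3}$ control of $(1+|y|^2)\w$. Both arguments are complete: the paper's avoids any case analysis on $x$ but hinges on a somewhat delicate algebraic inequality for the kernel difference, while yours is the more standard and transparent far-field expansion, at the price of checking the convergence of the tail integrals — which you do correctly (the exponents $|x|^{-3/2}$ from $\|(1+|y|^2)^{-1}\|_{L^4(|y|>|x|/2)}$ and the finiteness of $\int|y|(1+|y|^2)^{-1}|h|$ are exactly right).
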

\begin{proof}
From the Biot-Savart law \eqref{BS-law}, we estimate 
\beq \label{repeat}
\bega
|v(x)|&\lesssim \int_{\Rt}\frac{|\w(y)|}{|x-y|}dy=\lw(\int_{|x-y|\le R}+\int_{|x-y|\ge R}\rw)\frac{|\w(y)|}{|x-y|}dy\\
&\lesssim \lw(\int_{|x-y|\le R}|x-y|^{-4/3}dy\rw)^{3/4}\|\w\|_{L^4}+\lw(\int_{|x-y|\ge R}|x-y|^{-4}dy\rw)^{1/4}\|\w\|_{L^{4/3}}\\
&\lesssim R^{1/2}\|\w\|_{L^4}+R^{-1/2}\|\w\|_{L^{4/3}}.\\
\enda
\eeq
Thus choosing $R=\frac{\|\w\|_{L^{4/3}}}{\|\w\|_{L^4}}$, we have $\|v\|_{L^\infty}\lesssim \|\w\|_{L^{4/3}}^{1/2}\|\w\|_{L^4}^{1/2} $, which gives the first inequality. As for the second, we use $\| \omega \|_{L^p} \le \| \omega \|_{L^1}^{1/p}\|\omega\|^{1-1/p}_{L^\infty}$.

It remains to check the last inequality. We shall check it for $v_2$, the second component of $v$. The estimate on $v_1$ is similar. First, we check \beq \label{est20}
|x||v_2(x)|\lesssim \int_{\Rt}\frac{1}{|x-y|}|y||\w(y)|dy.
\eeq
By Biot-Savart law and $\int_{\Rt}\w(y)dy=0$, we have 
\[
|v_2(x)|=\frac{1}{2\pi}\lw|\int_{\Rt}\frac{x_1-y_1}{|x-y|^2}\w(y)dy\rw|\le \frac{1}{2\pi}\int_{\Rt}\lw|\frac{x_1-y_1}{|x-y|^2}-\frac{x_1}{|x|^2}\rw||\w(y)|dy.\\
\]
Now we have 
\[
\frac{x_1-y_1}{|x-y|^2}-\frac{x_1}{|x|^2}=\frac{1}{|x|^2|x-y|^2}\lw(|x|^2(x_1-y_1)-x_1|x-y|^2\rw).
\]
It follows that $|x|^2(x_1-y_1)-x_1|x-y|^2\le 4|x||y||x-y|$. Hence, 
\[
|x|\Big[ \frac{x_1-y_1}{|x-y|^2}-\frac{x_1}{|x|^2} \Big]\le  \frac{4|y|}{|x-y|},
\]
which gives \eqref{est20}. Now multiplying both sides of \eqref{est20} by $|x|$, we have 
\[\bega
|x|^2|v_2(x)|&\lesssim \int_{\Rt}\frac{|x||y|}{|x-y|}|\w(y)|dy\le \int_{\Rt}\frac{|y|+|x-y|}{|x-y|}|y||\w(y)|dy.\\
&=\int_{\Rt}\frac{1}{|x-y|}|y|^2|\w(y)|dy+\int_{\Rt}|y||\w(y)|dy\\
\enda
\]
Let us first treat the first term in the above. Repeating the argument of \eqref{repeat} for $\w=|y|^2|\w(y)|$, we have
\[
\int_{\Rt}\frac{1}{|x-y|}|y|^2|\w(y)|dy\lesssim \|(1+|y|^2)\w(y)\|_{\lbb}.
\]
For the second term, using H\"older inequality, we get
\[
\int_{\Rt}|y||\w(y)|dy=\int_{\Rt}\frac{|y|}{1+|y|^2}(1+|y|^2)||\w(y)|dy\lesssim\|(1+|y|^2)|\w(y)\|_{L^{4/3}}.
\]
Thus 
\[
|x|^2|v_2(x)|\lesssim \|(1+|x|^2)\w\|_{\lbb}.
\]
The lemma follows.
\end{proof}

\begin{lemma}\label{exp}
Let $z_1,z_2\in \mathbb{C}$ and $\psi$ be the angle between $z_1$ and $z_2$. Assuming that $|z_1|<|z_2|$ and $\sin(\psi)\neq 0$, there holds 
\[
\dfrac{1}{|z_1+z_2|^2}-\dfrac{1}{|z_2|^2}=\dfrac{1}{|z_2|^2}\sum_{n=1}^\infty(-1)^n \dfrac{|z_1|^n}{|z_2|^n}\dfrac{\sin((n+1)\psi)}{\sin(\psi)}.
\]
\end{lemma}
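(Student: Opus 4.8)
The plan is to reduce the complex identity to a single real scalar identity, namely the generating function of the Chebyshev polynomials of the second kind. First I would write $z_1=|z_1|e^{i\theta_1}$ and $z_2=|z_2|e^{i\theta_2}$, so that $\psi=\theta_1-\theta_2$ and $z_1\bar z_2=|z_1||z_2|e^{i\psi}$. Expanding
\[
|z_1+z_2|^2=|z_1|^2+|z_2|^2+2\,\mathrm{Re}(z_1\bar z_2)=|z_1|^2+|z_2|^2+2|z_1||z_2|\cos\psi
\]
and setting $r=|z_1|/|z_2|\in[0,1)$ (using the hypothesis $|z_1|<|z_2|$), I get
\[
\frac{1}{|z_1+z_2|^2}=\frac{1}{|z_2|^2}\cdot\frac{1}{1+2r\cos\psi+r^2}.
\]
After multiplying through by $|z_2|^2$, the lemma is therefore equivalent to the scalar identity
\[
\frac{1}{1+2r\cos\psi+r^2}-1=\sum_{n=1}^\infty(-1)^n r^n\,\frac{\sin((n+1)\psi)}{\sin\psi},\qquad 0\le r<1.
\]

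The second step is to prove this scalar identity by an elementary partial-fraction-plus-geometric-series computation. I would factor the denominator as $1+2r\cos\psi+r^2=(1+re^{i\psi})(1+re^{-i\psi})$ and decompose
\[
\frac{1}{(1+re^{i\psi})(1+re^{-i\psi})}=\frac{1}{e^{i\psi}-e^{-i\psi}}\left(\frac{e^{i\psi}}{1+re^{i\psi}}-\frac{e^{-i\psi}}{1+re^{-i\psi}}\right),
\]
where the hypothesis $\sin\psi\neq0$ (equivalently $e^{i\psi}\neq e^{-i\psi}$) guarantees the decomposition is legitimate. Since $r<1$ and $|e^{\pm i\psi}|=1$, each term expands as an absolutely convergent geometric series $\frac{e^{\pm i\psi}}{1+re^{\pm i\psi}}=\sum_{n\ge0}(-r)^n e^{\pm i(n+1)\psi}$; subtracting the two series and using $e^{i\psi}-e^{-i\psi}=2i\sin\psi$ together with $e^{i(n+1)\psi}-e^{-i(n+1)\psi}=2i\sin((n+1)\psi)$ collapses the coefficient of $(-r)^n$ to $\sin((n+1)\psi)/\sin\psi$. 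This yields
\[
\frac{1}{1+2r\cos\psi+r^2}=\sum_{n=0}^\infty(-1)^n r^n\,\frac{\sin((n+1)\psi)}{\sin\psi}.
\]

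Finally I would peel off the $n=0$ term, which equals $\sin\psi/\sin\psi=1$, to recover the displayed identity and hence the lemma. I do not expect any serious obstacle: the only points requiring care are the factorization of the denominator and the term-by-term manipulation of the two geometric series, both of which are immediate consequences of $0\le r=|z_1|/|z_2|<1$. As a remark, the scalar identity is precisely the classical generating function $\sum_{n\ge0}U_n(\cos\psi)\,w^n=(1-2w\cos\psi+w^2)^{-1}$ for the Chebyshev polynomials of the second kind, evaluated at $w=-r$, which provides an alternative one-line derivation.
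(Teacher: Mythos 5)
Your proof is correct and follows essentially the same route as the paper's: both reduce the claim to expanding $1/|1+z|^2$ with $z=re^{i\psi}$, $r=|z_1|/|z_2|<1$, as a power series in $r$ and identifying the coefficient of $(-r)^n$ as $\sin((n+1)\psi)/\sin\psi$. The only cosmetic difference is that the paper multiplies the two geometric series for $(1+z)^{-1}$ and $(1+\bar z)^{-1}$ and sums each antidiagonal via $z^n+z^{n-1}\bar z+\cdots+\bar z^n=(z^{n+1}-\bar z^{n+1})/(z-\bar z)$, whereas you first perform a partial-fraction split and then expand each factor separately.
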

\begin{proof}
Let $\frac{z_1}{z_2}=z=re^{i\psi}$. We have 
\[
\frac{1}{|z_1+z_2|^2}-\frac{1}{|z_2|^2}=\frac{1}{|z_2|^2}\lw(\frac{1}{|1+z|^2}-1\rw).
\]
Now for $|z|<1$, we have 
\[\bega
\frac{1}{|1+z|^2}&=\frac{1}{(1+z)(1+\bar z)}=(1-z+z^2-\cdots)(1-\bar z+\bar z^2+\cdots)\\
&=1-(z+\bar z)+(z^2+z\bar z+\bar z^2)-(z^3+z^2\bar z+z\bar z^2+\bar z^3)+\cdots .
\enda
\]
Now for each $n$, we have 
\[
z^n+z^{n-1}\bar z+\cdots+z\bar z^{n-1}+\bar z^{n}=\dfrac{z^{n+1}-\bar z^{n+1}}{z-\bar z}=r^n\dfrac{\sin((n+1)\psi)}{\sin \psi}.
\]
This concludes the proof.
\end{proof}

%

%

\end{document}